\theoremstyle{thmstyletwo}%
\newtheorem{theorem}{Theorem}[section]
\newtheorem{proposition}[theorem]{Proposition}%
\newtheorem{lemma}[theorem]{Lemma}
\newtheorem{corollary}[theorem]{Corollary}
\newtheorem{assumption}[theorem]{Assumption}
\newtheorem{remark}[theorem]{Remark}%
\numberwithin{equation}{section}
\begin{document}
	
	\DOI{DOI HERE}
	\copyrightyear{2022}
	\vol{00}
	\pubyear{2022}
	\access{Advance Access Publication Date: Day Month Year}
	\appnotes{Paper}
	\copyrightstatement{Published by Oxford University Press on behalf of the Institute of Mathematics and its Applications. All rights reserved.}
	\firstpage{1}
	
	
	\title[Adaptive Planewave Method for Eigenvalue Computations]{Convergence and complexity of an adaptive planewave method for eigenvalue computations}
	
	\author{Xiaoying Dai* and Yan Pan
		\address{\orgdiv{LSEC, Institute of Computational Mathematics and Scientific/Engineering Computing}, \orgname{Academy of Mathematics and Systems Science, Chinese Academy of Sciences}, \orgaddress{\state{Beijing 100190}, \country{China}}}\address{\orgdiv{School of Mathematical Sciences}, \orgname{University of Chinese Academy of Sciences}, \orgaddress{\state{Beijing 100049}, \country{China}}}}
	\author{Bin Yang
		\address{\orgdiv{NCMIS}, \orgname{Academy of Mathematics and Systems Science, Chinese Academy of Sciences}, \orgaddress{\state{Beijing 100190}, \country{China}}}}
	\author{Aihui Zhou
		\address{\orgdiv{LSEC, Institute of Computational Mathematics and Scientific/Engineering Computing}, \orgname{Academy of Mathematics and Systems Science, Chinese Academy of Sciences}, \orgaddress{\state{Beijing 100190}, \country{China}}}\address{\orgdiv{School of Mathematical Sciences}, \orgname{University of Chinese Academy of Sciences}, \orgaddress{\state{Beijing 100049}, \country{China}}}}
	
	\authormark{X. Dai et al.}
	
	\corresp[*]{Corresponding author: \href{email:daixy@lsec.cc.ac.cn}{daixy@lsec.cc.ac.cn}}
	
	\received{Date}{0}{Year}
	\revised{Date}{0}{Year}
	\accepted{Date}{0}{Year}
	
	
	\abstract{In this paper, we study the adaptive planewave discretization for a cluster of eigenvalues of second-order elliptic partial differential equations. We first design an a posteriori error estimator and  prove both the upper  and   lower bounds. Based on the a posteriori error estimator, we propose an adaptive planewave method. We then prove that the adaptive planewave approximations have the linear convergence rate and quasi-optimal complexity.}
	\keywords{adaptive planewave method; a cluster of eigenvalues; convergence rate; complexity.}
	
	
	\maketitle
	
	
	\section{Introduction}
	The mathematical understanding of the adaptive computational methods has derived much attention in mathematical community. We particularly note that the adaptive finite element methods have been extensively investigated for both source problems \citep[see, e.g.,][and references cited therein]{dorfler1996convergent,mekchay2005convergence,stevenson2007optimality,stevenson2008completion,cascon2008quasioptimal,he2011convergencea} and eigenvalue problems \citep[see, e.g.,][and references cited therein]{dai2008convergence,garau2009convergence,giani2009convergent,chen2011adaptive,chen2011finite,chen2014adaptive,garau2011convergence,dai2015convergence,bonito2016convergence}, including the a posteriori error estimates, the convergence and the complexity. The spectral and the pseudospectral methods have been successfully applied in scientific and engineering computation, such as heat conduction, fluid dynamics, quantum physics and so on. For instance, we understand that the planewave discretization methods have been widely used in electronic structure calculations based on the Kohn-Sham equations \citep{kresse1996efficiency,saad2010numerical,chen2013numerical,becke2014perspective}. However, to our best knowledge, there are very few works on adaptive planewave approximations of the partial differential equations. We refer to \citet{gygi1992adaptive} and \citet{liu2022adaptive} for the applications in electronic structure calculations and \citet{canuto2014adaptive,canuto2016adaptive} for the numerical analysis of linear elliptic source problems. There is no any mathematical analysis for adaptive planewave approximations of eigenvalue problems up to now.
	
	In this paper, we first design a residual-type a posteriori error estimator for the planewave approximations of  a class of linear second-order elliptic eigenvalue problems.  We prove that the error estimator can yield both the upper and lower bounds for the error of the approximations. Based on the a posteriori error estimator, we then propose an adaptive planewave method  with the D{\"o}rfler marking strategy \citep{dorfler1996convergent}, which is a typical marking strategy used in adaptive finite element approximations and different from the adaptive planewave method by updating the energy cut-off for planewave discretizations in \citet{liu2022adaptive}. Following \citet{dai2008convergence,dai2015convergence}, by the perturbation arguments, we prove that the adaptive planewave approximations for a cluster of eigenvalues have the asymptotic linear convergence rate and asymptotic quasi-optimal complexity under some reasonable assumptions. More precisely, under the assumption that the initial planewave basis are sufficient enough, we obtain that:
	\begin{itemize}
		\item the associated adaptive planewave approximate eigenspaces $\mathscr{M}_{\mathbb{G}_n}$ will converge to the exact eigenspaces $\mathscr{M}$ with some convergence rate (see Theorem \ref{decreasespace}):
		\begin{equation*}
			\delta_{H_p^1(\Omega)}(\mathscr{M},\mathscr{M}_{\mathbb{G}_n})\lesssim \alpha^n,
		\end{equation*}
		where $\alpha\in (0,1)$ is some constant.
		\item if $M(\lambda_{(i)})\subset\mathcal{A}^s$ for $i=1,2,\ldots,m$ and the marked indexes are of minimal cardinality, the adaptive planewave approximations have a quasi-optimal complexity as follows (see Theorem \ref{thm:opt-complex}):
		\begin{equation*}
			\delta_{H_p^1(\Omega)}(\mathscr{M},\mathscr{M}_{\mathbb{G}_n}) \lesssim (|\mathbb{G}_n|-|\mathbb{G}_0|)^{-s}.
		\end{equation*}
	\end{itemize}
	We refer to Section \ref{sec:conv-complex} for more details.
	
	The rest of this paper is organized as follows. In Section \ref{sec:pre}, we describe some basic notation and review the existing results of planewave approximations for a class of linear second-order elliptic source and eigenvalue problems that will be useful in our analysis. In Section \ref{sec:APWM}, we present a posteriori error estimators from the relationship between the elliptic eigenvalue approximations with the associated source approximations. We then design an adaptive planewave method and its feasible version for an elliptic eigenvalue problem. In Section \ref{sec:conv-complex}, we analyze the asymptotic convergence and asymptotic quasi-optimal complexity of the adaptive planewave method. Finally, some conclusion remarks are given in Section \ref{sec:concl}.

	\section{Preliminaries}\label{sec:pre}
	Let $\Omega=[0,2\pi)^d(d\ge 1)$. Denote the family of periodic continuous functions by
	\[
	C_p^0(\Omega)=\{ v\in C^0(\mathbb{R}^d):v(\bm{x}+2\pi \bm{n})=v(\bm{x}),~1\le j\le d,~\forall \bm{n}\in\mathbb{Z}^d,~\forall \bm{x}\in\Omega \}.
	\]
	For any $\bm{G}\in\mathbb{Z}^d$, we denote $e_{\bm{G}}(\bm{x})=1/(2\pi)^{d/2}e^{\mathrm{i}\bm{G}\cdot \bm{x}}$, $\bm{x}\in\mathbb{R}^d$. It is clear that the family $\{e_{\bm{G}}\}_{\bm{G}\in\mathbb{Z}^d}$ forms an orthonormal basis of
	\[
	L_p^2(\Omega)=\{ v\in L_{\text{loc}}^2(\mathbb{R}^d): v(\bm{x}+2\pi \bm{n})=v(\bm{x}),~1\le j\le d,~\forall \bm{n}\in\mathbb{Z}^d,~\forall \bm{x}\in\Omega \},
	\]
	and for any $v\in L_p^2(\Omega)$,
	\[
	v = \sum_{\bm{G}\in\mathbb{Z}^d}\hat{v}_{\bm{G}}e_{\bm{G}}\quad\text{with}\quad\hat{v}_{\bm{G}}=(e_{\bm{G}},v)=\frac{1}{(2\pi)^{d/2}}\int_\Omega v(\bm{x})e^{-\mathrm{i}\bm{G}\cdot \bm{x}}\operatorname{d\!}\bm{x}\textup{d}\bm{x}.
	\]
	Note that $\|v\|_{L^2(\Omega)}^2=\sum\limits_{\bm{G}\in\mathbb{Z}^d}|\hat{v}_{\bm{G}}|^2$.
	We shall use the notation of Sobolev spaces $H_p^s(\Omega)$ with $s\in\mathbb{R}$ for real valued periodic functions
	\[
	H_p^s(\Omega)=\left\{u=\sum_{\bm{G}\in\mathbb{Z}^d}\hat{u}_{\bm{G}} e_{\bm{G}}: \sum_{\bm{G}\in\mathbb{Z}^d}(1+|\bm{G}|^2)^s|\hat{u}_{\bm{G}}|^2<\infty~\text{and}~\forall \bm{G}\in\mathbb{Z}^d,~\hat{u}_{-\bm{G}}=\hat{u}_{\bm{G}}^*\right\}
	\]
	endowed with the inner products
	\[
	(u,v)_{H_p^s(\Omega)}=\sum_{\bm{G}\in\mathbb{Z}^d}(1+|\bm{G}|^2)^s\hat{u}_{\bm{G}}^*\hat{v}_{\bm{G}}.
	\]
	Here and hereafter, $|\bm{G}|$ denotes the Euclidean norm of the multi-index $\bm{G}$ and we will omit the domain $\Omega$ when we express norm and inner product. Throughout this paper, $A\lesssim B$ means $A\le CB$ with some generic positive constant $C$ that is independent of the index set $\mathbb{G}\subset\mathbb{Z}^d$  and $A\cong B$ means $B\lesssim A\lesssim B$.
	
	Given any finite index set $\mathbb{G}\subset\mathbb{Z}^d$ satisfying $\mathbb{G}=-\mathbb{G}$, we define the subspace of $H_p^1(\Omega)$
	\[
	V_\mathbb{G}\coloneqq\operatorname{span}\{ e_{\bm{G}}:\bm{G}\in\mathbb{G} \}\cap H_p^1(\Omega).
	\]
	We set $|\mathbb{G}|=$ the cardinality of $\mathbb{G}$. For any positive integer $M$, specially, we define $\mathbb{G}^{M}$  by
	\[
	\mathbb{G}^{M}=\{ \bm{G}\in\mathbb{Z}^d:|\bm{G}|\le M  \}.
	\]
	Throughout this paper, if not specially specified, $\mathbb{G}\subset\mathbb{Z}^d$ always satisfies $\mathbb{G}=-\mathbb{G}$ since only real valued periodic functions are taken into account. We shall also define the $L^2$-projection $\Pi_\mathbb{G}:L^2_p(\Omega)\rightarrow V_\mathbb{G}$ by
	\[
	(u-\Pi_\mathbb{G} u,v)=0,~\forall  v\in V_\mathbb{G}.
	\]
	The following results can be found in \citet{cances2010numerical}.
	\begin{proposition}\label{H1L2error}
		If $u\in H_p^m(\Omega)$ for some $m\ge 0$, then
		\begin{equation*}
			\|u-\Pi_{\mathbb{G}^{M}} u\|_{H_p^l}\le \frac{1}{M^{m-l}}\|u\|_{H_p^{m}},\quad 0\le l\le m.
		\end{equation*}
\end{proposition}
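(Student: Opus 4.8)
The plan is to reduce everything to the explicit description of $\Pi_{\mathbb{G}^{M}}$ as truncation of the Fourier series. First I would observe that, since $\{e_{\bm{G}}\}_{\bm{G}\in\mathbb{Z}^d}$ is orthonormal in $L_p^2(\Omega)$ and $V_{\mathbb{G}^{M}}$ is the span of the planewaves $e_{\bm{G}}$ with $|\bm{G}|\le M$ — here $\mathbb{G}^{M}=-\mathbb{G}^{M}$, so the reality constraint $\hat u_{-\bm{G}}=\hat u_{\bm{G}}^{*}$ is respected and, $V_{\mathbb{G}^{M}}$ being finite dimensional with smooth basis, the intersection with $H_p^1(\Omega)$ is harmless — the $L^2$-projection is simply $\Pi_{\mathbb{G}^{M}}u=\sum_{|\bm{G}|\le M}\hat u_{\bm{G}}e_{\bm{G}}$. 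Consequently the approximation error is exactly the high-frequency tail $u-\Pi_{\mathbb{G}^{M}}u=\sum_{|\bm{G}|>M}\hat u_{\bm{G}}e_{\bm{G}}$.

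Next I would compute the $H_p^l$ norm of this tail via the Parseval-type identity built into the definition of $H_p^s(\Omega)$, obtaining
\[
\|u-\Pi_{\mathbb{G}^{M}}u\|_{H_p^l}^{2}=\sum_{|\bm{G}|>M}(1+|\bm{G}|^2)^{l}\,|\hat u_{\bm{G}}|^{2}.
\]
The only estimate needed is the scalar inequality that, for $\bm{G}\in\mathbb{Z}^d$ with $|\bm{G}|>M$, one has $1+|\bm{G}|^2>M^{2}$, so that (using $l-m\le 0$)
\[
(1+|\bm{G}|^2)^{l}=(1+|\bm{G}|^2)^{l-m}(1+|\bm{G}|^2)^{m}\le M^{2(l-m)}(1+|\bm{G}|^2)^{m}.
\]
Plugging this in and then enlarging the index set to all of $\mathbb{Z}^d$ gives
\[
\|u-\Pi_{\mathbb{G}^{M}}u\|_{H_p^l}^{2}\le\frac{1}{M^{2(m-l)}}\sum_{\bm{G}\in\mathbb{Z}^d}(1+|\bm{G}|^2)^{m}\,|\hat u_{\bm{G}}|^{2}=\frac{1}{M^{2(m-l)}}\|u\|_{H_p^m}^{2},
\]
and taking square roots yields the stated bound. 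The borderline case $l=m$ is just the trivial inequality $\|u-\Pi_{\mathbb{G}^{M}}u\|_{H_p^m}\le\|u\|_{H_p^m}$, consistent with $M^{0}=1$.

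There is no serious obstacle: the argument is Parseval plus a one-line scalar estimate. The only points deserving a word of care are the identification of $\Pi_{\mathbb{G}^{M}}$ with Fourier truncation (orthonormality of the planewave basis, finite dimensionality, and $-\mathbb{G}^{M}=\mathbb{G}^{M}$) and the direction of monotonicity of $t\mapsto t^{l-m}$, which is what forces the hypothesis $l\le m$.
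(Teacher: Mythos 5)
Your proof is correct and is the standard argument: identify $\Pi_{\mathbb{G}^M}$ with Fourier truncation, apply the Parseval-type definition of the $H_p^s$ norm, and bound the tail using $(1+|\bm G|^2)^{l-m}\le M^{2(l-m)}$ for $|\bm G|>M$ and $l\le m$. The paper itself does not reprove this proposition but cites it from Cancès, Chakir and Maday (2010), where essentially this same elementary argument appears, so there is nothing to add.
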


\subsection{Planewave approximation of a source problem}
We consider the following source system:
\begin{equation}
	\left\{\begin{aligned}\label{bvp}
		& Lu_i= f_i~~\textrm{in} \quad\Omega,\quad i=1,\dots,N,\\
		& u_i(\bm{x}+2\pi \bm{n})=u_i(\bm{x})\quad \forall \bm{n}\in\mathbb{Z}^d.
	\end{aligned}
	\right.
\end{equation}
where $L$ is a linear second order elliptic operator:
\begin{equation*}
	Lu= -\Delta u + V u.
\end{equation*}
We assume $V\in H_p^\sigma(\Omega), \sigma> d/2$ and $V>0$. Since $H_p^\sigma(\Omega)\hookrightarrow C_p^0(\Omega)$, there exist $0<\nu_*<\nu^*<\infty$ such that $\nu_*\le V\le\nu^*$. Note that $u_i\in H_p^2(\Omega)$ provided $f_i\in L^2(\Omega)$ \citep{cances2010numerical}. It should be mentioned that the assumption $V>0$ is just for brevity \citep[see, e.g.,][Remark 2.9]{dai2008convergence}.

Let 
\[
a(u,v)=(Lu,v)=(\nabla u,\nabla v)+(Vu,v),~\forall u,v\in H_p^1(\Omega).
\]
We define by $\|v\|_a=\sqrt{a(v,v)}$ the energy norm of any $v\in H_p^1(\Omega)$, which satisfies
\begin{equation*}
	\sqrt{\alpha_*}\|v\|_{H_p^1}\leq\|v\|_{a}\leq\sqrt{\alpha^*}\|v\|_{H_p^1},
\end{equation*}
where $\alpha_*=\min(\nu_*,1),\alpha^*=\max(\nu^*,1)$. Define operator $K:L_p^2(\Omega)\rightarrow H^1_p(\Omega)$ as follows:
\begin{equation}\label{defoperaK}
	a(Kw,v) = (w,v),\quad\forall w\in L_p^2(\Omega),~~\forall v\in H^1_p(\Omega).
\end{equation}
We see that $K$ is well-defined and compact. Define the Galerkin-projection $P_\mathbb{G}:~H_p^1(\Omega)\mapsto V_\mathbb{G}$ by
\begin{equation}\label{galerpro}
	a(u-P_\mathbb{G} u,v)=0,\quad\forall v\in V_\mathbb{G},
\end{equation}
and the operator $K_\mathbb{G}:L_p^2(\Omega)\mapsto V_\mathbb{G}$ by
\begin{equation*}
	a(K_\mathbb{G} w,v) = (w,v),\quad\forall w\in L_p^2(\Omega),~~\forall v\in V_\mathbb{G}.
\end{equation*}
We have $K_\mathbb{G} = P_\mathbb{G} K$.

The following conclusion can be found in \citet{babuska1989finite} and \citet{xu2000local}.
\begin{lemma}\label{rholimits0}
	Let
	\begin{equation*}
		\rho_\Omega(\mathbb{G})=\sup\limits_{f\in L^2(\Omega),\|f\|_{L^2}=1}\inf\limits_{v\in V_\mathbb{G}}\|Kf-v\|_{a}.
	\end{equation*}
	Then 
	\begin{equation*}
		\|u-P_\mathbb{G} u\|_{L^2_p}\le \rho_\Omega(\mathbb{G})\|u-P_\mathbb{G} u\|_a,\quad\forall u\in H_p^1(\Omega)
	\end{equation*}
	and $\rho_{\Omega}(\mathbb{G}^{M})\rightarrow 0$ as $M\rightarrow \infty$.
\end{lemma}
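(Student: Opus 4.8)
The plan is to treat the two assertions separately. The norm inequality holds for an arbitrary admissible index set $\mathbb{G}$ and follows from a duality (Aubin--Nitsche-type) argument, whereas the limit $\rho_\Omega(\mathbb{G}^{M})\to 0$ rests on the $H^2_p$-regularity of $K$ together with the approximation estimate of Proposition \ref{H1L2error}.

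For the first assertion, fix $u\in H^1_p(\Omega)$ and set $e=u-P_\mathbb{G} u$; we may assume $e\neq 0$. Using the definition \eqref{defoperaK} of $K$ with $w=e$ and then the Galerkin orthogonality \eqref{galerpro}, which gives $a(e,v)=0$ for all $v\in V_\mathbb{G}$, I obtain for every $v\in V_\mathbb{G}$
\[
\|e\|_{L^2_p}^2=(e,e)=a(Ke,e)=a(Ke-v,e)\le\|Ke-v\|_a\,\|e\|_a .
\]
Taking the infimum over $v\in V_\mathbb{G}$ and using that $V_\mathbb{G}$ is a linear subspace, so that
\[
\inf_{v\in V_\mathbb{G}}\|Ke-v\|_a=\|e\|_{L^2_p}\,\inf_{v\in V_\mathbb{G}}\|K(e/\|e\|_{L^2_p})-v\|_a\le\|e\|_{L^2_p}\,\rho_\Omega(\mathbb{G}),
\]
gives $\|e\|_{L^2_p}^2\le\rho_\Omega(\mathbb{G})\,\|e\|_{L^2_p}\,\|e\|_a$, and dividing by $\|e\|_{L^2_p}$ yields the claimed bound.

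For the second assertion, take $f\in L^2(\Omega)$ with $\|f\|_{L^2}=1$. Since $LKf=f$, the regularity recalled after \eqref{bvp} gives $Kf\in H^2_p(\Omega)$; moreover, the well-posedness of the source problem in $H^1_p$ combined with $-\Delta(Kf)=f-V\,Kf$ and $V\in L^\infty(\Omega)$ yields $\|Kf\|_{H^2_p}\lesssim\|f\|_{L^2}=1$ with a constant independent of $\mathbb{G}$. Choosing $v=\Pi_{\mathbb{G}^{M}}Kf\in V_{\mathbb{G}^{M}}$, using the equivalence $\|\cdot\|_a\le\sqrt{\alpha^*}\,\|\cdot\|_{H^1_p}$, and invoking Proposition \ref{H1L2error} with $m=2$ and $l=1$, I get
\[
\inf_{v\in V_{\mathbb{G}^{M}}}\|Kf-v\|_a\le\|Kf-\Pi_{\mathbb{G}^{M}}Kf\|_a\le\sqrt{\alpha^*}\,\|Kf-\Pi_{\mathbb{G}^{M}}Kf\|_{H^1_p}\le\frac{\sqrt{\alpha^*}}{M}\,\|Kf\|_{H^2_p}\lesssim\frac1M .
\]
Taking the supremum over all such $f$ gives $\rho_\Omega(\mathbb{G}^{M})\lesssim 1/M\to 0$ as $M\to\infty$.

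I do not expect a genuine obstacle. The only points that need care are the use of the linear structure of $V_\mathbb{G}$ in the normalization step of the duality argument, and keeping the regularity constant in $\|Kf\|_{H^2_p}\lesssim\|f\|_{L^2}$ uniform, so that the resulting bound on $\rho_\Omega(\mathbb{G}^{M})$ is genuinely independent of $M$ (and the constant in the first part independent of $\mathbb{G}$). Alternatively, $\rho_\Omega(\mathbb{G}^{M})\to 0$ can be obtained qualitatively from the compactness of $K\colon L^2_p(\Omega)\to H^1_p(\Omega)$, since a precompact subset of $H^1_p(\Omega)$ is uniformly well approximated by the nested spaces $V_{\mathbb{G}^{M}}$; the quantitative route above is preferable because the explicit rate is what the later convergence and complexity analysis builds on.
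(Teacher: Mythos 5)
Your proof is correct. The paper does not supply its own argument for this lemma---it simply cites \citet{babuska1989finite} and \citet{xu2000local}---but your two-step argument is the standard one and exactly parallels the proofs the paper does spell out for the analogous quantities $\delta_{\mathbb{G}^M}(\lambda)$ (Lemma~\ref{deltalimits0}) and $\nu(\mathbb{G}^M)$: an Aubin--Nitsche duality step using the definition of $K$, the symmetry of $a(\cdot,\cdot)$, and Galerkin orthogonality for the $L^2_p$ bound, followed by $H^2_p$-regularity of $K$ together with Proposition~\ref{H1L2error} for the decay $\rho_\Omega(\mathbb{G}^M)\lesssim 1/M$.
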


Note that $\rho_\Omega(\mathbb{G})\ge \rho_\Omega(\mathbb{G}^*)$ if $\mathbb{G}\subset\mathbb{G}^*$. 

For any $U=(u_1,\dots,u_N)$, we denote
\begin{equation*}
	\|U\|=\left(\sum_{i=1}^N\|u_i\|^2\right)^{1/2}
\end{equation*}
for any relevant norm $\|\cdot\|$.

The weak form of problem \eqref{bvp} reads as follows: find $U= (u_1,\dots,u_N)\in (H^1_p(\Omega))^N$ such that
\begin{equation}\label{weakformbvp}
	a(u_i,v_i) = (f_i,v_i)~~\forall v_i\in H^1_p(\Omega),~i=1,\dots,N.
\end{equation}

Given a finite index set $\mathbb{G}\subset \mathbb{Z}^d$ satisfying $\mathbb{G}=-\mathbb{G}$, the planewave approximation of \eqref{weakformbvp} in $V_\mathbb{G}$ reads: find
$ U_\mathbb{G}=(u_{1,\mathbb{G}},\dots,u_{N,\mathbb{G}})\in (V_\mathbb{G})^N$ such that
\begin{equation}\label{discreformbvp}
	~~a(u_{i,\mathbb{G}},v_{i,\mathbb{G}}) = (f_i,v_{i,\mathbb{G}}),\quad\forall v_{i,\mathbb{G}}\in V_\mathbb{G},~i=1,\dots,N.
\end{equation}

For any $w_{i,\mathbb{G}}\in V_\mathbb{G}$, the residual $\bar{r}(w_{i,\mathbb{G}})$ is defined by $\bar{r}(w_{i,\mathbb{G}})=f_i-Lw_{i,\mathbb{G}}$ and the a posterior error estimator $\bar{\eta}(w_{i,\mathbb{G}};\mathbb{G}^*)$ for any $\mathbb{G}^*\subset\mathbb{Z}^d$ is defined by
\[
\bar{\eta}^2(w_{i,\mathbb{G}};\mathbb{G}^*)=\|\Pi_{\mathbb{G}^*}\bar{r}(w_{i,\mathbb{G}})\|_{H_p^{-1}(\Omega)}^2=\sum_{\bm{G}\in \mathbb{G}^*}\frac{|\hat{\bar{r}}_{\bm{G}}(w_{i,\mathbb{G}})|}{1+|\bm{G}|^2},
\]
which is used in \citet{canuto2014adaptive}. For any $W_\mathbb{G}=(w_{1,\mathbb{G}},\ldots,w_{N,\mathbb{G}})\in (V_\mathbb{G})^N$ and $\mathbb{G}^*\subset\mathbb{Z}^d$, let 
\begin{equation*}
	\bar{r}(W_{\mathbb{G}}) = (f_1-Lw_{1,\mathbb{G}},\ldots,f_N-Lw_{N,\mathbb{G}})\quad\text{and}\quad\bar{\eta}^2(W_\mathbb{G};\mathbb{G}^*)=\sum_{i=1}^N\bar{\eta}^2(w_{i,\mathbb{G}};\mathbb{G}^*).
\end{equation*}
We shall abbreviate $\bar{\eta}(w_{\mathbb{G}};\mathbb{Z}^d)$ and $\bar{\eta}^2(W_\mathbb{G};\mathbb{Z}^d)$ to $\bar{\eta}(w_{\mathbb{G}})$ and $\bar{\eta}^2(W_\mathbb{G})$, respectively.

The following proposition is a direct result from (2.9) of \citet{canuto2014adaptive}.
\begin{proposition}
	If $U=(u_1,u_2,\ldots,u_N)$ is the solution of \eqref{bvp}, then
	\begin{equation*}
		\frac{1}{\sqrt{\alpha^*}}\|\bar{r}(W_{\mathbb{G}})\|_{H_p^{-1}}\leq\|U-W_{\mathbb{G}}\|_a\leq\frac{1}{\sqrt{\alpha_*}}\|\bar{r}(W_{\mathbb{G}})\|_{H_p^{-1}},\quad \forall W_{\mathbb{G}}\in (V_\mathbb{G})^N,
	\end{equation*}
	or namely,
	\begin{equation}\label{estimatbvp}
		\frac{1}{\sqrt{a^*}}\bar{\eta}(W_{\mathbb{G}})\leq
		\|U-W_{\mathbb{G}}\|_a\leq\frac{1}{\sqrt{a_*}}\bar{\eta}(W_{\mathbb{G}}),\quad \forall W_{\mathbb{G}}\in (V_\mathbb{G})^N.
	\end{equation}	
\end{proposition}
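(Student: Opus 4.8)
The plan is to reduce the vector estimate to a scalar one, prove the scalar version by a Riesz-representation/duality argument, and then sum. Fix an index $i$ and set $e_i = u_i - w_{i,\mathbb{G}}$. First I would record the variational identity: since $u_i$ solves \eqref{weakformbvp}, for every $v\in H_p^1(\Omega)$,
\[
a(e_i,v) = (f_i,v) - a(w_{i,\mathbb{G}},v) = \langle \bar{r}(w_{i,\mathbb{G}}),v\rangle ,
\]
where $\bar{r}(w_{i,\mathbb{G}}) = f_i - Lw_{i,\mathbb{G}}\in H_p^{-1}(\Omega)$ is paired with $v$ through the $L^2$ duality and $\langle Lw_{i,\mathbb{G}},v\rangle = a(w_{i,\mathbb{G}},v)$. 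In other words, $e_i$ is exactly the Riesz representative of $\bar{r}(w_{i,\mathbb{G}})$ with respect to the inner product $a(\cdot,\cdot)$.

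For the upper bound I would test with $v=e_i$ and combine the $H_p^{-1}$--$H_p^1$ duality with the lower norm equivalence $\sqrt{\alpha_*}\|\cdot\|_{H_p^1}\le\|\cdot\|_a$:
\[
\|e_i\|_a^2 = \langle \bar{r}(w_{i,\mathbb{G}}),e_i\rangle \le \|\bar{r}(w_{i,\mathbb{G}})\|_{H_p^{-1}}\,\|e_i\|_{H_p^1} \le \frac{1}{\sqrt{\alpha_*}}\,\|\bar{r}(w_{i,\mathbb{G}})\|_{H_p^{-1}}\,\|e_i\|_a ,
\]
so that $\|e_i\|_a\le \alpha_*^{-1/2}\,\|\bar{r}(w_{i,\mathbb{G}})\|_{H_p^{-1}}$. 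For the lower bound I would instead estimate, for arbitrary $v\neq 0$, $|\langle \bar{r}(w_{i,\mathbb{G}}),v\rangle| = |a(e_i,v)|\le \|e_i\|_a\|v\|_a\le \sqrt{\alpha^*}\,\|e_i\|_a\|v\|_{H_p^1}$, and take the supremum over $v$ to get $\|\bar{r}(w_{i,\mathbb{G}})\|_{H_p^{-1}}\le \sqrt{\alpha^*}\,\|e_i\|_a$.

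Finally I would square the two scalar inequalities, sum over $i=1,\dots,N$, and use $\|U-W_{\mathbb{G}}\|_a^2=\sum_{i=1}^N\|e_i\|_a^2$ together with $\|\bar{r}(W_{\mathbb{G}})\|_{H_p^{-1}}^2=\sum_{i=1}^N\|\bar{r}(w_{i,\mathbb{G}})\|_{H_p^{-1}}^2=\bar{\eta}^2(W_{\mathbb{G}})$; taking square roots gives \eqref{estimatbvp}. I do not anticipate a genuine obstacle here: the only points that need care are checking that $\bar{r}(w_{i,\mathbb{G}})$ is a bounded functional on $H_p^1(\Omega)$ (immediate, since $f_i\in L^2$ and $Lw_{i,\mathbb{G}}$ is a finite planewave sum, so that its $H_p^{-1}$-norm is finite and the Fourier-side expression $\bigl(\sum_{\bm G}|\hat{\bar{r}}_{\bm G}(w_{i,\mathbb{G}})|^2/(1+|\bm G|^2)\bigr)^{1/2}$ coincides with the dual norm realized by the supremum above), and keeping $\alpha_*$ and $\alpha^*$ on the correct sides. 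Alternatively, one can simply apply (2.9) of \citet{canuto2014adaptive} componentwise and assemble the result.
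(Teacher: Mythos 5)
Your proof is correct and takes essentially the route the paper intends: the paper simply invokes (2.9) of Canuto, Nochetto, Stevenson and Verani (2014) — the scalar Riesz-representation/duality identity you rederive — and then notes that the vector case follows componentwise, which is exactly your final summation step.
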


The D\"{o}rfler marking strategy \citep{dorfler1996convergent,canuto2014adaptive}, which will be applied in our adaptive planewave method, is stated as the following general form:
\begin{algorithm}[H]
	\caption{\textbf{D\"orfler marking strategy:} $\operatorname{\text{D\"{O}RFLER}}(\eta,U,\mathbb{G},\theta)$}
	\begin{algorithmic}[1]
		\State Construct a subset $\delta \mathbb{G}\subset\mathbb{G}^c\coloneqq\mathbb{Z}^d\setminus \mathbb{G}$ satisfying $\delta \mathbb{G}=-\delta \mathbb{G}$ and
		\begin{equation*}
			\eta(U;\delta \mathbb{G})\geq\theta\eta(U);
		\end{equation*}
		\State Return $\delta \mathbb{G}$.
	\end{algorithmic}
\end{algorithm}

The adaptive planewave algorithm with D\"{o}rfler marking strategy for solving \eqref{weakformbvp} is stated as follows \citep[cf.][]{canuto2014adaptive}:

\begin{algorithm}[H]
	\caption{Adaptive planewave algorithm for the source problem}
	\label{algo:APWM}
	\begin{algorithmic}[1]
		\State Choose parameters $\theta\in(0,1)$ and $tol\in[0,1)$;
		\State Set $\bar{r}(u_{i,\mathbb{G}_0})=f_i\,(i=1,2,\ldots,N)$, $\mathbb{G}_0=\emptyset$ and $n=0$;
		\State\label{algostate:compute-eta} Compute the error estimator $\bar{\eta}(U_{\mathbb{G}_{n}})$;
		\State If $\bar{\eta}(U_{\mathbb{G}_{n}}) < tol$, then stop;
		\State Construct $\delta \mathbb{G}_n=\operatorname{\text{D\"{O}RFLER}}(\bar{\eta},U_{\mathbb{G}_n},\mathbb{G}_n,\theta)$;
		\State Let $\mathbb{G}_{n+1}=\mathbb{G}_n\cup\delta\mathbb{G}_n$;
		\State Solve \eqref{discreformbvp} with $\mathbb{G}$ being replaced by $\mathbb{G}_{n+1}$ to get the discrete solution $U_{\mathbb{G}_{n+1}}=(u_{1,\mathbb{G}_{n+1}},\dots,u_{N,\mathbb{G}_{n+1}})\in (V_{\mathbb{G}_{n+1}})^N$;
		\State Let $n=n+1$ and go to Step \ref{algostate:compute-eta}.
	\end{algorithmic}
\end{algorithm}

We observe that $\bar{\eta}(U_{\mathbb{G}})$ is an infinite sum over $\mathbb{Z}^d$ and Algorithm \ref{algo:APWM} is indeed not practicable.

The following conclusion is a direct extension of Theorem 3.1 in \citet{canuto2014adaptive} from the case of $N=1$ to the case of any $N$.
\begin{theorem}
	Let $\rho=\rho(\theta)=\sqrt{1-\frac{\alpha_*}{\alpha^*}\theta^2}\in(0,1)$ and $\{U_{\mathbb{G}_n}\}_{n\geq0}$ be the sequence generated by Algorithm \ref{algo:APWM}, then
	\begin{equation}\label{convidealrho}
		\|U-U_{\mathbb{G}_{n+1}}\|_a\leq\rho\|U-U_{\mathbb{G}_n}\|_a,\quad n=1,2,\ldots
	\end{equation}
\end{theorem}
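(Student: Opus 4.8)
The plan is to run the standard contraction argument for adaptive schemes --- the Galerkin--Pythagoras identity combined with a discrete (localized) lower bound --- taking advantage of the fact that on the Fourier side both $\Pi_{\mathbb{G}^*}$ and the resolvent $(I-\Delta)^{-1}$ act as diagonal multipliers, which makes the discrete lower bound essentially explicit. Fix $n\ge 1$ and abbreviate $e_n=U-U_{\mathbb{G}_n}$ and $E_n=U_{\mathbb{G}_{n+1}}-U_{\mathbb{G}_n}$. First I would record that, since $\mathbb{G}_{n+1}=\mathbb{G}_n\cup\delta\mathbb{G}_n\supseteq\mathbb{G}_n$ implies $V_{\mathbb{G}_n}\subset V_{\mathbb{G}_{n+1}}$, problem \eqref{discreformbvp} posed on $\mathbb{G}_{n+1}$ makes $U_{\mathbb{G}_{n+1}}$ the $a$-orthogonal projection of $U$ onto $(V_{\mathbb{G}_{n+1}})^N$, while $E_n\in(V_{\mathbb{G}_{n+1}})^N$; hence $\sum_{i=1}^N a(u_i-u_{i,\mathbb{G}_{n+1}},\,u_{i,\mathbb{G}_{n+1}}-u_{i,\mathbb{G}_n})=0$ and so
\[
\|e_n\|_a^2=\|e_{n+1}\|_a^2+\|E_n\|_a^2 .
\]
It therefore suffices to establish the energy-gain bound $\|E_n\|_a^2\ge\frac{\alpha_*}{\alpha^*}\theta^2\|e_n\|_a^2$, since combined with the identity above it gives $\|e_{n+1}\|_a^2\le(1-\frac{\alpha_*}{\alpha^*}\theta^2)\|e_n\|_a^2=\rho^2\|e_n\|_a^2$, and \eqref{convidealrho} follows by taking square roots.

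The main step is the discrete lower bound: I claim that for each $i$,
\[
\|u_{i,\mathbb{G}_{n+1}}-u_{i,\mathbb{G}_n}\|_a\ \ge\ \frac{1}{\sqrt{\alpha^*}}\,\bar\eta(u_{i,\mathbb{G}_n};\delta\mathbb{G}_n).
\]
To prove it I would combine \eqref{discreformbvp} on $\mathbb{G}_{n+1}$ with the identity $a(u_{i,\mathbb{G}_n},v)=(Lu_{i,\mathbb{G}_n},v)$ to obtain $a(u_{i,\mathbb{G}_{n+1}}-u_{i,\mathbb{G}_n},v)=(\bar r(u_{i,\mathbb{G}_n}),v)$ for every $v\in V_{\mathbb{G}_{n+1}}$, noting $\bar r(u_{i,\mathbb{G}_n})\in L_p^2$ because $V\in H_p^\sigma\hookrightarrow C_p^0$. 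Since $\delta\mathbb{G}_n=-\delta\mathbb{G}_n$ and $\delta\mathbb{G}_n\subset\mathbb{G}_{n+1}$, the trigonometric polynomial
\[
\varphi\ :=\ \sum_{\bm G\in\delta\mathbb{G}_n}\frac{\hat{\bar r}_{\bm G}(u_{i,\mathbb{G}_n})}{1+|\bm G|^2}\,e_{\bm G}\ =\ (I-\Delta)^{-1}\Pi_{\delta\mathbb{G}_n}\bar r(u_{i,\mathbb{G}_n})
\]
lies in $V_{\delta\mathbb{G}_n}\subset V_{\mathbb{G}_{n+1}}$, and a one-line Parseval computation gives $(\bar r(u_{i,\mathbb{G}_n}),\varphi)=\|\varphi\|_{H_p^1}^2=\bar\eta^2(u_{i,\mathbb{G}_n};\delta\mathbb{G}_n)$. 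Taking $v=\varphi$, then using Cauchy--Schwarz for $a(\cdot,\cdot)$ together with $\|\varphi\|_a\le\sqrt{\alpha^*}\|\varphi\|_{H_p^1}$, yields the claim; squaring and summing over $i=1,\dots,N$ gives $\|E_n\|_a^2\ge\frac{1}{\alpha^*}\bar\eta^2(U_{\mathbb{G}_n};\delta\mathbb{G}_n)$.

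To close the argument I would chain the estimator inequalities: the D\"orfler step guarantees $\bar\eta(U_{\mathbb{G}_n};\delta\mathbb{G}_n)\ge\theta\,\bar\eta(U_{\mathbb{G}_n})$, and the reliability half of \eqref{estimatbvp} gives $\bar\eta(U_{\mathbb{G}_n})\ge\sqrt{\alpha_*}\,\|e_n\|_a$; hence $\|E_n\|_a^2\ge\frac{1}{\alpha^*}\theta^2\bar\eta^2(U_{\mathbb{G}_n})\ge\frac{\alpha_*}{\alpha^*}\theta^2\|e_n\|_a^2$, which is exactly the energy-gain bound. The delicate point --- genuinely mild here because of the planewave structure --- is the discrete lower bound: one needs a test function in $V_{\mathbb{G}_{n+1}}$ that simultaneously captures the full marked estimator mass $\bar\eta^2(U_{\mathbb{G}_n};\delta\mathbb{G}_n)$ and has energy norm comparable to $\bar\eta(U_{\mathbb{G}_n};\delta\mathbb{G}_n)$, and $\varphi=(I-\Delta)^{-1}\Pi_{\delta\mathbb{G}_n}\bar r(u_{i,\mathbb{G}_n})$ does the job precisely because $\Pi_{\delta\mathbb{G}_n}$ and $(I-\Delta)^{-1}$ are Fourier multipliers preserving the index set $\delta\mathbb{G}_n$; the analogous step in adaptive finite element analysis requires interior and edge bubble functions and is considerably more involved.
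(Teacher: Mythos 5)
Your argument is correct and follows exactly the standard contraction mechanism that the paper invokes by citing \citet{canuto2014adaptive} (Theorem 3.1): the Galerkin--Pythagoras orthogonality $\|U-U_{\mathbb{G}_n}\|_a^2=\|U-U_{\mathbb{G}_{n+1}}\|_a^2+\|U_{\mathbb{G}_{n+1}}-U_{\mathbb{G}_n}\|_a^2$, a discrete lower bound obtained by testing the error equation against $\varphi=(I-\Delta)^{-1}\Pi_{\delta\mathbb{G}_n}\bar r(u_{i,\mathbb{G}_n})\in V_{\mathbb{G}_{n+1}}$ (which yields $\|U_{\mathbb{G}_{n+1}}-U_{\mathbb{G}_n}\|_a^2\ge\frac{1}{\alpha^*}\bar\eta^2(U_{\mathbb{G}_n};\delta\mathbb{G}_n)$ explicitly, thanks to the diagonal Fourier structure), and then the D\"orfler property together with the reliability bound \eqref{estimatbvp}. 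Since the paper does not reproduce the proof but defers to the cited reference, your write-up supplies precisely the intended argument, extended componentwise from $N=1$ to general $N$.
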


\subsection{A linear eigenvalue problem}
Consider the following elliptic eigenvalue problem:
\begin{equation}
	\left\{\begin{aligned}\label{evp}
		& Lu = \lambda u~~\textrm{in} ~~\Omega,\\
		& u(\bm{x}+2\pi \bm{n})=u(\bm{x})~~\forall \bm{n}\in\mathbb{Z}^d.
	\end{aligned}
	\right.
\end{equation}

The weak form of problem \eqref{evp} reads: find $(\lambda,u)\in\mathbb{R}\times H_p^1(\Omega)$ such that
\begin{equation}\label{weakform}
	a(u,v)=\lambda(u,v),\quad\forall v\in H_p^1(\Omega).
\end{equation}

We see that \eqref{weakform} has a sequence of real eigenvalues
$$0<\lambda_1<\lambda_2\leq\lambda_3\leq\cdots$$
and the corresponding eigenfunctions
$$u_1,u_2,u_3,\ldots,$$
which can be assumed to satisfy
$$(u_i,u_j)=\delta_{ij},~i,j=1,2,\ldots,$$
where the $\lambda_j$s are repeated according to multiplicity.

Given any finite index set $\mathbb{G}\subset\mathbb{Z}^d$ satisfying $\mathbb{G}=-\mathbb{G}$, the planewave approximation of \eqref{weakform} in $V_\mathbb{G}$ reads: find $u_\mathbb{G}\in V_\mathbb{G}$ such that
\begin{equation}\label{discreform}
	a(u_\mathbb{G},v_\mathbb{G}) = \lambda_\mathbb{G}(u_\mathbb{G},v_\mathbb{G}),\quad\forall v_\mathbb{G}\in V_\mathbb{G}.
\end{equation}
The eigenvalues of \eqref{discreform} can be ordered as follows:
$$0<\lambda_{1,\mathbb{G}}\le\lambda_{2,\mathbb{G}}\leq\cdots\le\lambda_{n_\mathbb{G},\mathbb{G}},\quad n_\mathbb{G}=\textrm{dim} V_\mathbb{G},$$
and corresponding eigenfunctions may be denoted by
$$u_{1,\mathbb{G}},u_{2,\mathbb{G}},\ldots,u_{n_\mathbb{G},\mathbb{G}},$$
satisfying
$$(u_{i,\mathbb{G}},u_{j,\mathbb{G}})=\delta_{ij},i,j=1,2,\ldots,n_\mathbb{G}.$$
By the definition of operators $K$ and $K_\mathbb{G}$, it is clear that $K$ has eigenvalues 
\[
\lambda_1^{-1}>\lambda_2^{-1}\ge\lambda_3^{-1}\ge\cdots
\]
associated with eigenfunctions $u_1,u_2,u_3,\ldots$, and $K_\mathbb{G}$ has eigenvalues
\[
\lambda_{1,\mathbb{G}}^{-1}\ge\lambda_{2,\mathbb{G}}^{-1}\ge\cdots\ge\lambda_{n_\mathbb{G},\mathbb{G}}^{-1}
\]
associated with eigenfunctions $u_{1,\mathbb{G}},u_{2,\mathbb{G}},\ldots,u_{n_\mathbb{G},\mathbb{G}}$.

Let $\lambda$ be some eigenvalue of \eqref{weakform}. We set
\[
M(\lambda)=\{w\in H_p^1(\Omega):w\mbox{ is an eigenfunction of \eqref{weakform} corresponding to } \lambda\}.
\]
Let $\Gamma$ be a circle in the complex plane centered at $\lambda^{-1}$ and not enclosing any other eigenvalue of $K$. Define the spectral projection $E: L_p^2(\Omega)\to M(\lambda)$ associated with $K$ and $\lambda$ by
\[
E=E(\lambda)=\frac{1}{2\pi\mathrm{i}}\int_{\Gamma}(z-K)^{-1}dz.
\]
When $\mathbb{G}\supset \mathbb{G}^{M}$ and $M$ is large enough, we can define the spectral projection associated with $K_\mathbb{G}$ as \[
E_{\mathbb{G}}=E_\mathbb{G}(\lambda)=\frac{1}{2\pi\mathrm{i}}\int_{\Gamma}(z-K_\mathbb{G})^{-1}dz.
\]

\begin{lemma}\label{deltalimits0}
	Let 
	\begin{equation*}
		\delta_\mathbb{G}(\lambda)=\sup\limits_{w\in M(\lambda),\|w\|_{a}=1}\inf\limits_{v\in V_\mathbb{G}}\|w-v\|_{a,\Omega}.
	\end{equation*}
	Then $\delta_{\mathbb{G}^{M}}(\lambda)\rightarrow 0$ as $M\rightarrow \infty$.
\end{lemma}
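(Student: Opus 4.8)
The plan is to reduce the claim to the already-established approximation property of the planewave spaces, namely Proposition \ref{H1L2error}, after observing that the eigenspace $M(\lambda)$ consists of smooth functions. First I would recall that every $w\in M(\lambda)$ satisfies $Lw=\lambda w$, i.e. $-\Delta w = \lambda w - Vw$. A bootstrap argument then gives $w\in H_p^\infty(\Omega)$: since $w\in H_p^1(\Omega)$ and $V\in H_p^\sigma(\Omega)$ with $\sigma>d/2$, the product $Vw$ lies in $H_p^1(\Omega)$ (using that $H_p^\sigma$ is a multiplier on $H_p^1$ for $\sigma>d/2$), hence $\Delta w\in H_p^1(\Omega)$ and $w\in H_p^3(\Omega)$; iterating, $w\in H_p^m(\Omega)$ for every $m$. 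In particular, since $M(\lambda)$ is finite dimensional, there is a constant $C_{m}$ with $\|w\|_{H_p^m}\le C_m\|w\|_a$ for all $w\in M(\lambda)$ (all norms on a finite-dimensional space being equivalent; the constant depends on $\lambda$ and $m$).

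Next I would estimate $\delta_{\mathbb{G}^M}(\lambda)$ by choosing, for each $w\in M(\lambda)$, the specific competitor $v=\Pi_{\mathbb{G}^M}w\in V_{\mathbb{G}^M}$ in the infimum. By the norm equivalence $\|\cdot\|_a\cong\|\cdot\|_{H_p^1}$ recorded after the definition of the energy norm, and then by Proposition \ref{H1L2error} with $l=1$ and any fixed $m\ge 2$,
\begin{equation*}
	\|w-\Pi_{\mathbb{G}^M}w\|_a\le\sqrt{\alpha^*}\,\|w-\Pi_{\mathbb{G}^M}w\|_{H_p^1}\le\frac{\sqrt{\alpha^*}}{M^{m-1}}\|w\|_{H_p^m}\le\frac{\sqrt{\alpha^*}\,C_m}{M^{m-1}}\|w\|_a.
\end{equation*}
Taking the supremum over $w\in M(\lambda)$ with $\|w\|_a=1$ yields $\delta_{\mathbb{G}^M}(\lambda)\le \sqrt{\alpha^*}\,C_m\,M^{-(m-1)}\to 0$ as $M\to\infty$, which is the assertion. (One should note $\Pi_{\mathbb{G}^M}w\in H_p^1(\Omega)$ since $w$ is smooth, so it is a legitimate element of $V_{\mathbb{G}^M}$; alternatively one may use the Galerkin projection $P_{\mathbb{G}^M}w$, whose energy-norm error is bounded by that of $\Pi_{\mathbb{G}^M}w$ by C\'ea's lemma.)

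The only genuine point requiring care — the ``main obstacle,'' though a mild one — is the elliptic regularity bootstrap establishing $M(\lambda)\subset H_p^m(\Omega)$ for all $m$, and in particular justifying that multiplication by $V\in H_p^\sigma(\Omega)$ maps $H_p^k(\Omega)$ into $H_p^{\min(k,\sigma)}(\Omega)$ for the relevant $k$; this is standard for periodic Sobolev spaces when $\sigma>d/2$, and is precisely the regularity already invoked in the excerpt (the remark that $u_i\in H_p^2(\Omega)$ when $f_i\in L^2(\Omega)$, citing \citet{cances2010numerical}). Everything else is a direct application of Proposition \ref{H1L2error} together with the finite-dimensionality of $M(\lambda)$, and the quantitative rate $M^{-(m-1)}$ for arbitrary $m$ is a free byproduct.
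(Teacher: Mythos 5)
Your core strategy is exactly the paper's: for $w\in M(\lambda)$ take the competitor $v=\Pi_{\mathbb{G}^M}w$, apply Proposition~\ref{H1L2error} with $l=1$, and conclude from a uniform higher-Sobolev bound on $M(\lambda)$. The paper's proof just notes $w=\lambda Kw$ and invokes the boundedness of $K:H_p^1(\Omega)\to H_p^2(\Omega)$ (equivalently $K:L_p^2\to H_p^2$, the elliptic $H^2$-regularity already quoted from Cances et al.), which gives $\|w\|_{H_p^2}\lesssim\|w\|_a$ at once and yields the rate $M^{-1}$. So the lemma is indeed proved by your argument, and the mechanism is the same.

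That said, two intermediate claims in your write-up are not correct as stated, and both are avoided by the paper's shortcut. First, the assertion that $H_p^\sigma(\Omega)$ is a multiplier on $H_p^1(\Omega)$ whenever $\sigma>d/2$ requires in addition $\sigma\ge 1$: for $d=1$ and $\sigma\in(\tfrac12,1)$, $(\nabla V)w$ need not be in $L^2$, so $Vw\notin H_p^1$ in general. To reach $w\in H_p^2$ one only needs $Vw\in L^2$, which follows simply from $V\in L^\infty$ (since $\sigma>d/2$) and $w\in L^2$; there is no need to place $Vw$ in $H_p^1$. Second, the bootstrap does not give $w\in H_p^m(\Omega)$ for all $m$. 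With only $V\in H_p^\sigma$, the multiplier rule you yourself state ($V\cdot H_p^k\to H_p^{\min(k,\sigma)}$) makes the iteration stall at $w\in H_p^{\sigma+2}(\Omega)$; $H_p^\infty$-regularity would require smooth $V$. Consequently the advertised ``free byproduct'' rate $M^{-(m-1)}$ for arbitrary $m$ is not available — you get $M^{-1}$ with the paper's argument, or at best $M^{-(\sigma+1)}$ with the full bootstrap. None of this affects the truth of the lemma, since $m=2$ suffices for $\delta_{\mathbb{G}^M}(\lambda)\to 0$, but you should either cap the regularity at $H_p^{\sigma+2}$ or, more economically, drop the bootstrap entirely and use $K:L_p^2\to H_p^2$ bounded as the paper does. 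Your use of finite-dimensionality of $M(\lambda)$ to get the uniform constant is also fine, though it is subsumed by the operator-norm bound on $K$.
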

\begin{proof}
	For any $w\in M(\lambda)$ with $\|w\|_{a}=1$, we have
	\[
	\inf_{v\in V_{\mathbb{G}^{M}}}\|w-v\|_a\le \|w-\Pi_{\mathbb{G}^{M}} w\|_a.
	\]
	We see from $w=\lambda Kw$ that $w\in H_p^2(\Omega)$. Hence Proposition \ref{H1L2error} implies
	\[
	\|w-\Pi_{\mathbb{G}^{M}}w\|_a\le\sqrt{\alpha^*}\|w-\Pi_{\mathbb{G}^{M}}w\|_{H_p^1}\le\frac{\sqrt{\alpha^*}}{M}\|w\|_{H_p^2},
	\]
	which together with $K: H_p^1(\Omega)\to H_p^2(\Omega)$ being bounded completes the proof.
\end{proof}

We note that $\delta_\mathbb{G}(\lambda)\ge \delta_{\mathbb{G}^*}(\lambda)$ if $\mathbb{G}\subset\mathbb{G}^*$.

Similar to \citet{babuska1989finite,babuska1991eigenvalue}, we have the following propositions.
\begin{proposition}
	Let $\mathbb{G}\supset \mathbb{G}^M$, $\lambda\in\mathbb{R}$ be any eigenvalue of \eqref{weakform} with multiplicity $q$, and $\lambda_{\mathbb{G},l}(l=1,2,\ldots,q)$ be the eigenvalues of \eqref{discreform} which approximate $\lambda$. Then
	\begin{align}
		&\|u-E_\mathbb{G} u\|_{L^2}\lesssim \rho_{\Omega}(\mathbb{G})\|u-E_\mathbb{G} u\|_a, ~\|u-E_\mathbb{G} u\|_a\lesssim\delta_\mathbb{G}(\lambda)\|u\|_a~~\forall u\in M(\lambda), \\
		&\lambda_{\mathbb{G},l}-\lambda\lesssim \delta_\mathbb{G}(\lambda)^2 \label{lambdaineq}
	\end{align}
	provided $M\gg 1$.
\end{proposition}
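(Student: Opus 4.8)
The plan is to adapt the classical Babuška--Osborn spectral approximation theory (as in \citet{babuska1989finite,babuska1991eigenvalue}) to the periodic planewave setting; the structural ingredients are the identity $K_\mathbb{G}=P_\mathbb{G}K$, the self-adjointness of $K$ and of $K_\mathbb{G}|_{V_\mathbb{G}}$ with respect to \emph{both} $a(\cdot,\cdot)$ and $(\cdot,\cdot)$, and the fact that $\|K-K_\mathbb{G}\|_{\mathcal{L}(L^2_p,H^1_p)}\le\rho_\Omega(\mathbb{G})$ (because $(I-P_\mathbb{G})K$ is the $a$-best approximation error of $K$) together with the smallness of $\rho_\Omega(\mathbb{G})$ and $\delta_\mathbb{G}(\lambda)$ for $M\gg1$ granted by Lemmas~\ref{rholimits0} and~\ref{deltalimits0}. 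I would first record the preliminaries obtained from this norm convergence: for $M$ large the circle $\Gamma$ stays at a fixed positive distance from the discrete spectrum and encloses exactly the $q$ reciprocals $\lambda_{\mathbb{G},l}^{-1}$ ($l=1,\dots,q$), so $E_\mathbb{G}$ is well defined and the resolvents $(z-K_\mathbb{G})^{-1}$, $z\in\Gamma$, are bounded on $V_\mathbb{G}$ uniformly in $\mathbb{G}$ in \emph{both} the $L^2$ and the energy norm (self-adjointness plus spectral gap), while $E_\mathbb{G}$ is injective on $M(\lambda)$, whence $E_\mathbb{G}(M(\lambda))$ is precisely the span of the discrete eigenfunctions associated with $\lambda_{\mathbb{G},1},\dots,\lambda_{\mathbb{G},q}$ and has dimension $q$.

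For the two bounds on $u-E_\mathbb{G}u$ I would use one integral representation. For $u\in M(\lambda)$ one has $Ku=\lambda^{-1}u$, so $(z-K)^{-1}u=(z-\lambda^{-1})^{-1}u$ and $(K-K_\mathbb{G})u=(I-P_\mathbb{G})Ku=\lambda^{-1}(I-P_\mathbb{G})u$; since $Eu=u$, the resolvent identity gives $u-E_\mathbb{G}u=\frac{\lambda^{-1}}{2\pi\mathrm{i}}\int_\Gamma (z-\lambda^{-1})^{-1}(z-K_\mathbb{G})^{-1}(I-P_\mathbb{G})u\,dz$. To handle that $(I-P_\mathbb{G})u\notin V_\mathbb{G}$ I would split $(z-K_\mathbb{G})^{-1}g=z^{-1}g+z^{-1}(z-K_\mathbb{G})^{-1}K_\mathbb{G}g$ and use $K_\mathbb{G}g\in V_\mathbb{G}$ together with $\|K_\mathbb{G}g\|_a^2=(g,K_\mathbb{G}g)\le\|g\|_{L^2}\|K_\mathbb{G}g\|_{L^2}$, which yields $\|K_\mathbb{G}g\|_a\lesssim\|g\|_{L^2}$. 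Estimating the representation in the energy norm gives $\|u-E_\mathbb{G}u\|_a\lesssim\|(I-P_\mathbb{G})u\|_a$, and $\|(I-P_\mathbb{G})u\|_a=\inf_{v\in V_\mathbb{G}}\|u-v\|_a\le\delta_\mathbb{G}(\lambda)\|u\|_a$ by the definition of $\delta_\mathbb{G}(\lambda)$, which is the second estimate. Estimating the same representation in $\|\cdot\|_{L^2}$, the factor $\rho_\Omega(\mathbb{G})$ enters through Lemma~\ref{rholimits0} in the form $\|(I-P_\mathbb{G})u\|_{L^2}\le\rho_\Omega(\mathbb{G})\|(I-P_\mathbb{G})u\|_a$ and through the uniform $L^2$-boundedness of $(z-K_\mathbb{G})^{-1}$ on $V_\mathbb{G}$, giving $\|u-E_\mathbb{G}u\|_{L^2}\lesssim\rho_\Omega(\mathbb{G})\|(I-P_\mathbb{G})u\|_a$; since $P_\mathbb{G}u$ is the $a$-best approximation in $V_\mathbb{G}$ and $E_\mathbb{G}u\in V_\mathbb{G}$, we have $\|(I-P_\mathbb{G})u\|_a=\|u-P_\mathbb{G}u\|_a\le\|u-E_\mathbb{G}u\|_a$, which upgrades this to the first estimate.

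For \eqref{lambdaineq} I would use the elementary Rayleigh-quotient identity, valid for any $v\ne0$ and any $w\in M(\lambda)$,
\[
\frac{a(v,v)}{(v,v)}-\lambda=\frac{a(v-w,v-w)-\lambda\|v-w\|_{L^2}^2}{\|v\|_{L^2}^2}\le\frac{\|v-w\|_a^2}{\|v\|_{L^2}^2}.
\]
Applying it with $v=E_\mathbb{G}w\in E_\mathbb{G}(M(\lambda))$ for $w\in M(\lambda)$, and using the second estimate $\|E_\mathbb{G}w-w\|_a\lesssim\delta_\mathbb{G}(\lambda)\|w\|_a$, the exact relation $\|w\|_a^2=\lambda\|w\|_{L^2}^2$ on $M(\lambda)$, and $\|E_\mathbb{G}w\|_{L^2}\ge\tfrac12\|w\|_{L^2}$ for $M\gg1$, one gets $\frac{a(v,v)}{(v,v)}-\lambda\lesssim\delta_\mathbb{G}(\lambda)^2$ for every nonzero $v\in E_\mathbb{G}(M(\lambda))$. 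Since $E_\mathbb{G}(M(\lambda))$ is spanned by the cluster's discrete eigenfunctions, $\max_{0\ne v\in E_\mathbb{G}(M(\lambda))}a(v,v)/(v,v)$ equals the largest of $\lambda_{\mathbb{G},1},\dots,\lambda_{\mathbb{G},q}$, so $\lambda_{\mathbb{G},l}-\lambda\lesssim\delta_\mathbb{G}(\lambda)^2$ for all $l$, which together with the min--max bound $\lambda_{\mathbb{G},l}\ge\lambda$ gives \eqref{lambdaineq}.

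The main obstacle is the first ($L^2$) estimate: obtaining the genuine extra factor $\rho_\Omega(\mathbb{G})$ rather than only $O(1)$ requires the discrete resolvent $(z-K_\mathbb{G})^{-1}$ to be bounded on $V_\mathbb{G}$ in the $L^2$ norm \emph{uniformly in $\mathbb{G}$}, which hinges on $K_\mathbb{G}$ being self-adjoint with respect to $(\cdot,\cdot)$ and on $\Gamma$ keeping a fixed positive distance from $\sigma(K_\mathbb{G})$; this distance property, together with $\operatorname{dim}E_\mathbb{G}(M(\lambda))=q$, is exactly where the hypothesis $\mathbb{G}\supset\mathbb{G}^M$ with $M\gg1$ is used.
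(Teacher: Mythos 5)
Your proposal is correct and follows exactly the route the paper gestures at: the paper offers no proof of this proposition, citing only that it is ``similar to'' \citet{babuska1989finite,babuska1991eigenvalue}, and your argument is the standard Babu\v{s}ka--Osborn spectral-projection machinery (resolvent integral representation, $K_{\mathbb{G}}=P_{\mathbb{G}}K$, self-adjointness and spectral gap giving uniform resolvent bounds in both norms, Rayleigh quotient on $E_{\mathbb{G}}(M(\lambda))$) specialized to the periodic planewave setting. The only point worth double-checking in a final write-up is the residue computation after splitting $(z-K_{\mathbb{G}})^{-1}g=z^{-1}g+z^{-1}(z-K_{\mathbb{G}})^{-1}K_{\mathbb{G}}g$ -- one should note explicitly that $0$ lies outside $\Gamma$ so the $z^{-1}g$ piece contributes exactly $(I-P_{\mathbb{G}})u$ -- but the argument as outlined is sound.
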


\begin{proposition}\label{euandpu}
	Let $\mathbb{G}\supset \mathbb{G}^M$. For any $u\in M(\lambda)$, there holds
	\begin{equation*}
		1\leq \frac{\|u-E_{\mathbb{G}} u\|_a}{\|u-P_\mathbb{G} u\|_a}=1+\mathcal {O}(\nu(\mathbb{G}))
	\end{equation*}
	provided $M\gg 1$, where $\nu(\mathbb{G})=\sup\limits_{f\in H_p^1(\Omega),\|f\|_a=1}\inf\limits_{v\in V_\mathbb{G}}\|Kf-v\|_a$.
\end{proposition}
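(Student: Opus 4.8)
The plan is to reduce the two‑sided estimate to a bound on $\|P_{\mathbb{G}}u-E_{\mathbb{G}}u\|_a$. For $M\gg1$ the Riesz projection $E_{\mathbb{G}}$ maps $L^2_p(\Omega)$ into $\operatorname{span}\{u_{l,\mathbb{G}}:l\in\Lambda\}\subset V_{\mathbb{G}}$, where $\Lambda$ is the (finite, of cardinality $q$) set of indices of the discrete eigenvalues enclosed by $\Gamma$; hence $E_{\mathbb{G}}u\in V_{\mathbb{G}}$ and, writing $u-E_{\mathbb{G}}u=(u-P_{\mathbb{G}}u)+(P_{\mathbb{G}}u-E_{\mathbb{G}}u)$, the two summands are $a$‑orthogonal by the Galerkin orthogonality \eqref{galerpro}. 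Thus
\[
\|u-E_{\mathbb{G}}u\|_a^2=\|u-P_{\mathbb{G}}u\|_a^2+\|P_{\mathbb{G}}u-E_{\mathbb{G}}u\|_a^2 ,
\]
which already gives the lower bound $1\le\|u-E_{\mathbb{G}}u\|_a/\|u-P_{\mathbb{G}}u\|_a$ (the case $u\in V_{\mathbb{G}}$, where both errors vanish, being trivial), and reduces the claim to proving $\|P_{\mathbb{G}}u-E_{\mathbb{G}}u\|_a\lesssim\nu(\mathbb{G})\|u-P_{\mathbb{G}}u\|_a$.

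I would then work in the $L^2$‑orthonormal, $a$‑orthogonal basis $\{u_{l,\mathbb{G}}\}$ of $V_{\mathbb{G}}$ (so $\|u_{l,\mathbb{G}}\|_a^2=\lambda_{l,\mathbb{G}}$ and $E_{\mathbb{G}}w=\sum_{l\in\Lambda}(w,u_{l,\mathbb{G}})u_{l,\mathbb{G}}$). The key algebraic identity is that for every $l$,
\[
\lambda_{l,\mathbb{G}}(P_{\mathbb{G}}u,u_{l,\mathbb{G}})=a(P_{\mathbb{G}}u,u_{l,\mathbb{G}})=a(u,u_{l,\mathbb{G}})=\lambda(u,u_{l,\mathbb{G}}),
\]
using that $u_{l,\mathbb{G}}$ is a discrete eigenfunction, the Galerkin orthogonality, and $u\in M(\lambda)$; equivalently $(u-P_{\mathbb{G}}u,u_{l,\mathbb{G}})=\tfrac{\lambda_{l,\mathbb{G}}-\lambda}{\lambda_{l,\mathbb{G}}}(u,u_{l,\mathbb{G}})$. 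Expanding $P_{\mathbb{G}}u=\sum_l(P_{\mathbb{G}}u,u_{l,\mathbb{G}})u_{l,\mathbb{G}}$ and $E_{\mathbb{G}}u$ in this basis and inserting the identity gives
\[
\|P_{\mathbb{G}}u-E_{\mathbb{G}}u\|_a^2=\sum_{l\in\Lambda}\lambda_{l,\mathbb{G}}|(u-P_{\mathbb{G}}u,u_{l,\mathbb{G}})|^2+\lambda^2\sum_{l\notin\Lambda}\frac{1}{\lambda_{l,\mathbb{G}}}|(u,u_{l,\mathbb{G}})|^2=:T_1+T_2 .
\]
For $T_1$, since $(u-P_{\mathbb{G}}u,u_{l,\mathbb{G}})=a\big(K_{\mathbb{G}}(u-P_{\mathbb{G}}u),u_{l,\mathbb{G}}\big)$, a Cauchy--Schwarz in $\|\cdot\|_a$ and the finiteness of $\Lambda$ (with $\lambda_{l,\mathbb{G}}\to\lambda$) yield $T_1\lesssim\|K_{\mathbb{G}}(u-P_{\mathbb{G}}u)\|_a^2$. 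For $T_2$, Lemma \ref{deltalimits0} and the min–max principle give, for $M\gg1$, a uniform spectral gap $|\lambda_{l,\mathbb{G}}-\lambda|\ge c>0$ and $\lambda_{l,\mathbb{G}}/|\lambda_{l,\mathbb{G}}-\lambda|\le C$ for $l\notin\Lambda$, so the identity forces $|(u,u_{l,\mathbb{G}})|\le C|(u-P_{\mathbb{G}}u,u_{l,\mathbb{G}})|$; together with $\sum_l\lambda_{l,\mathbb{G}}^{-1}|(u-P_{\mathbb{G}}u,u_{l,\mathbb{G}})|^2=\big(K_{\mathbb{G}}(u-P_{\mathbb{G}}u),u-P_{\mathbb{G}}u\big)=\|K_{\mathbb{G}}(u-P_{\mathbb{G}}u)\|_a^2$ this gives $T_2\lesssim\|K_{\mathbb{G}}(u-P_{\mathbb{G}}u)\|_a^2$.

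It remains to show $\|K_{\mathbb{G}}(u-P_{\mathbb{G}}u)\|_a\le\nu(\mathbb{G})\|u-P_{\mathbb{G}}u\|_a$, and this is the step I expect to be the crux: it is precisely here that the sharper constant $\nu(\mathbb{G})$, rather than $\rho_{\Omega}(\mathbb{G})$, must be extracted. Setting $g=u-P_{\mathbb{G}}u$, note $g\perp_a V_{\mathbb{G}}$; for $v\in V_{\mathbb{G}}$ one has
\[
a(K_{\mathbb{G}}g,v)=(g,v)=a(g,Kv)=a\big(g,(I-P_{\mathbb{G}})Kv\big)\le\|g\|_a\,\|(I-P_{\mathbb{G}})Kv\|_a\le\nu(\mathbb{G})\|g\|_a\|v\|_a,
\]
using the definitions of $K_{\mathbb{G}}$ and $K$, the symmetry of $a(\cdot,\cdot)$, $a(g,P_{\mathbb{G}}Kv)=0$, and the definition of $\nu(\mathbb{G})$ applied to $v\in H^1_p(\Omega)$; since $K_{\mathbb{G}}g\in V_{\mathbb{G}}$, taking the supremum over $v\in V_{\mathbb{G}}$ with $\|v\|_a=1$ gives the bound. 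Combining, $\|P_{\mathbb{G}}u-E_{\mathbb{G}}u\|_a^2=T_1+T_2\lesssim\nu(\mathbb{G})^2\|u-P_{\mathbb{G}}u\|_a^2$, whence $\|u-E_{\mathbb{G}}u\|_a^2/\|u-P_{\mathbb{G}}u\|_a^2=1+\mathcal{O}(\nu(\mathbb{G})^2)$, and taking square roots (with the lower bound from the first paragraph) yields $1\le\|u-E_{\mathbb{G}}u\|_a/\|u-P_{\mathbb{G}}u\|_a=1+\mathcal{O}(\nu(\mathbb{G}))$.
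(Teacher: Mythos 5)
Your proof is correct. The paper does not actually prove Proposition~\ref{euandpu}; it states it with only the attribution ``Similar to \citet{babuska1989finite,babuska1991eigenvalue},'' so there is no in-paper argument to compare against. Your route — the $a$-orthogonal split $u-E_\mathbb{G} u=(u-P_\mathbb{G} u)+(P_\mathbb{G} u-E_\mathbb{G} u)$ (which gives the lower bound for free), the expansion of $P_\mathbb{G} u-E_\mathbb{G} u$ in the $L^2$-orthonormal, $a$-orthogonal discrete eigenbasis together with the identity $\lambda_{l,\mathbb{G}}(P_\mathbb{G} u,u_{l,\mathbb{G}})=\lambda(u,u_{l,\mathbb{G}})$, and finally the duality bound $\|K_\mathbb{G}(u-P_\mathbb{G} u)\|_a\le\nu(\mathbb{G})\|u-P_\mathbb{G} u\|_a$ — is precisely the kind of argument those references use, and every step checks out (including the crucial last step, where restricting the test function $v$ to $V_\mathbb{G}$, using $a(g,P_\mathbb{G} Kv)=0$, and then invoking the definition of $\nu(\mathbb{G})$ extracts the correct constant rather than the cruder $\rho_\Omega(\mathbb{G})$). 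In fact you prove slightly more than asked, namely the ratio equals $1+\mathcal O(\nu(\mathbb{G})^2)$.
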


Similar to Lemma \ref{deltalimits0}, we have the following proposition.
\begin{proposition}
	$\nu({\mathbb{G}^{M}})\rightarrow 0$ as $M\rightarrow\infty$.	
\end{proposition}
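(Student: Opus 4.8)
The plan is to follow essentially verbatim the argument used for Lemma~\ref{deltalimits0}, the only genuine difference being that the "source" $Kf$ now ranges over the image under $K$ of the $H_p^1$-unit ball rather than over a finite-dimensional eigenspace, so one needs a regularity estimate for $K$ that is uniform in $f$.

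First I would fix $f\in H_p^1(\Omega)$ with $\|f\|_a=1$ and bound the inner infimum by choosing the competitor $v=\Pi_{\mathbb{G}^{M}}(Kf)\in V_{\mathbb{G}^{M}}$, which gives
\[
\inf_{v\in V_{\mathbb{G}^{M}}}\|Kf-v\|_a\le \|Kf-\Pi_{\mathbb{G}^{M}}(Kf)\|_a\le\sqrt{\alpha^*}\,\|Kf-\Pi_{\mathbb{G}^{M}}(Kf)\|_{H_p^1}.
\]
Next, since $\|f\|_a=1$ implies $f\in L_p^2(\Omega)$ with $\|f\|_{L^2}\le\|f\|_{H_p^1}\le \alpha_*^{-1/2}$, the elliptic regularity recalled in Section~\ref{sec:pre} (namely $u\in H_p^2(\Omega)$ whenever $Lu\in L^2(\Omega)$, see \citet{cances2010numerical}) shows $Kf\in H_p^2(\Omega)$, and the accompanying a priori estimate gives $\|Kf\|_{H_p^2}\lesssim\|f\|_{L^2}\lesssim 1$ with a constant independent of $f$. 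Applying Proposition~\ref{H1L2error} with $l=1$ and $m=2$ then yields
\[
\|Kf-\Pi_{\mathbb{G}^{M}}(Kf)\|_{H_p^1}\le\frac{1}{M}\|Kf\|_{H_p^2}\lesssim\frac{1}{M}.
\]
Combining the two displays gives $\inf_{v\in V_{\mathbb{G}^{M}}}\|Kf-v\|_a\lesssim M^{-1}$ with a bound that does not depend on the particular $f$; taking the supremum over all such $f$ therefore gives $\nu(\mathbb{G}^{M})\lesssim M^{-1}\to 0$ as $M\to\infty$.

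The only point that requires a little care — and the closest thing here to an obstacle — is the uniformity of the constant in $\|Kf\|_{H_p^2}\lesssim\|f\|_{L^2}$: this is precisely the statement that $K:L_p^2(\Omega)\to H_p^2(\Omega)$ is a bounded operator, which follows from the $H^2$ a priori estimate for $L$ (itself a consequence of $V\in H_p^\sigma(\Omega)$ with $\sigma>d/2$ and $V>0$, via the embedding $H_p^\sigma(\Omega)\hookrightarrow C_p^0(\Omega)$; alternatively one can invoke the closed graph theorem). Once this is in hand, the passage to the supremum over $f$ is harmless, since the resulting bound $C/M$ is already independent of $f$, and one concludes exactly as in the proof of Lemma~\ref{deltalimits0}.
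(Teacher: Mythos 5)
Your proof is correct and follows essentially the same route as the paper's: bound the infimum by the competitor $\Pi_{\mathbb{G}^M}(Kf)$, pass from the $a$-norm to the $H_p^1$ norm, invoke Proposition~\ref{H1L2error} with $m=2$, $l=1$, and conclude via the boundedness of $K$ into $H_p^2(\Omega)$, yielding a bound of order $M^{-1}$ uniform in $f$. The only cosmetic difference is that you factor the regularity estimate through $\|f\|_{L^2}\lesssim\|f\|_{H_p^1}$ while the paper directly cites that $K:H_p^1(\Omega)\to H_p^2(\Omega)$ is bounded; these are equivalent here.
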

\begin{proof}
	For any $f\in H_p^1(\Omega)$ with $\|f\|_a=1$, we have
	\[
	\inf_{v\in V_{\mathbb{G}^{M}}}\|Kf-v\|_a\le \|Kf-\Pi_{\mathbb{G}^{M}}Kf\|_a.
	\]
	Set $w=Kf$, we get
	\[
	\inf_{v\in V_{\mathbb{G}^{M}}}\|Kf-v\|_a\le \|w-\Pi_{\mathbb{G}^{M}}w\|_a.
	\]
	Thanks to $w\in H_p^2(\Omega)$ and Proposition \ref{H1L2error}, we obtain
	\[
	\|w-\Pi_{\mathbb{G}^{M}}w\|_a\le\sqrt{\alpha^*}\|w-\Pi_{\mathbb{G}^{M}}w\|_{H^1_p}\le\frac{\sqrt{\alpha^*}}{M}\|w\|_{H_p^2}.
	\]
	Since $K:H_p^1(\Omega)\rightarrow H_p^2(\Omega)$ is bounded, we arrive at the conclusion.
\end{proof}

Following \citet{dai2015convergence}, we obtain the following results.
\begin{proposition}\label{elambdaul2}
	Let $\mathbb{G}\supset \mathbb{G}^M$. For any $u\in M(\lambda)$ with $\|u\|_{L^2}=1$, there holds
	\begin{equation*}
		1-C\rho_\Omega(\mathbb{G})\delta_\mathbb{G}(\lambda)\leq\|E_\mathbb{G} u\|^2_{L^2}\leq 1
	\end{equation*}
	provided $M\gg 1$, where $C$ is some constant not depending on $\mathbb{G}$.
\end{proposition}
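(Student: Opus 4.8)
The plan is to exploit the fact that $E_\mathbb{G}$ is an $L^2$-orthogonal projection and then reduce the two-sided bound to the already-established estimates for $\|u-E_\mathbb{G} u\|_{L^2}$. First I would record the structural observation underlying everything: a short computation from the symmetry of $a(\cdot,\cdot)$ shows that $K_\mathbb{G}$ is self-adjoint on $L_p^2(\Omega)$ with respect to $(\cdot,\cdot)$, its spectrum being $\{0\}$, with eigenspace the $L^2$-orthogonal complement of $V_\mathbb{G}$, together with the finitely many positive numbers $\lambda_{l,\mathbb{G}}^{-1}$ and the $L^2$-orthonormal eigenfunctions $u_{l,\mathbb{G}}$. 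Since $\Gamma$ encloses $\lambda^{-1}$ but no other point of the spectrum of $K$ — in particular not $0$ — and since $\lambda_{\mathbb{G},l}\to\lambda$ as $M\to\infty$ by \eqref{lambdaineq} and Lemma \ref{deltalimits0}, for $M\gg1$ exactly $q$ of the $\lambda_{l,\mathbb{G}}^{-1}$ lie inside $\Gamma$, so $E_\mathbb{G}$ is precisely the $L_p^2(\Omega)$-orthogonal projection onto their span. In particular $\|E_\mathbb{G} v\|_{L^2}\le\|v\|_{L^2}$ and the Pythagorean identity $\|v\|_{L^2}^2=\|E_\mathbb{G} v\|_{L^2}^2+\|v-E_\mathbb{G} v\|_{L^2}^2$ hold for every $v\in L_p^2(\Omega)$.

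The upper bound is then immediate: taking $v=u$ with $\|u\|_{L^2}=1$ gives $\|E_\mathbb{G} u\|_{L^2}^2\le\|u\|_{L^2}^2=1$.

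For the lower bound I would write $\|E_\mathbb{G} u\|_{L^2}^2=1-\|u-E_\mathbb{G} u\|_{L^2}^2$ and estimate $\|u-E_\mathbb{G} u\|_{L^2}$. Combining $\|u-E_\mathbb{G} u\|_{L^2}\lesssim\rho_\Omega(\mathbb{G})\|u-E_\mathbb{G} u\|_a$ with $\|u-E_\mathbb{G} u\|_a\lesssim\delta_\mathbb{G}(\lambda)\|u\|_a$ (the proposition preceding Proposition \ref{euandpu}) and $\|u\|_a^2=a(u,u)=\lambda(u,u)=\lambda$ yields $\|u-E_\mathbb{G} u\|_{L^2}^2\lesssim\lambda\,\rho_\Omega(\mathbb{G})^2\delta_\mathbb{G}(\lambda)^2$. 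For $M\gg1$ one has $\rho_\Omega(\mathbb{G})\le\rho_\Omega(\mathbb{G}^M)\le1$ and $\delta_\mathbb{G}(\lambda)\le\delta_{\mathbb{G}^M}(\lambda)\le1$ by Lemmas \ref{rholimits0} and \ref{deltalimits0}, hence $\rho_\Omega(\mathbb{G})^2\delta_\mathbb{G}(\lambda)^2\le\rho_\Omega(\mathbb{G})\delta_\mathbb{G}(\lambda)$, so that $\|u-E_\mathbb{G} u\|_{L^2}^2\le C\rho_\Omega(\mathbb{G})\delta_\mathbb{G}(\lambda)$ with $C$ depending only on $\lambda$ and the fixed hidden constants, not on $\mathbb{G}$. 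Substituting back gives $\|E_\mathbb{G} u\|_{L^2}^2\ge1-C\rho_\Omega(\mathbb{G})\delta_\mathbb{G}(\lambda)$, which is the assertion.

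I expect the only genuinely delicate point to be the first paragraph, namely checking that $E_\mathbb{G}$ is honestly an $L^2$-orthogonal projection of the correct rank: this uses the self-adjointness of $K_\mathbb{G}$, the fact that $0$ lies strictly outside $\Gamma$ (so the Dunford integral annihilates the kernel of $K_\mathbb{G}$), and the perturbation bound $\lambda_{\mathbb{G},l}\to\lambda$ to pin down the dimension count once $M\gg1$. All of this is standard spectral approximation theory in the spirit of \citet{babuska1989finite,babuska1991eigenvalue} and \citet{dai2015convergence}; the remaining manipulations are elementary.
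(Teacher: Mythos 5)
Your argument is correct; the paper states this proposition without proof, simply citing \citet{dai2015convergence}, so there is no in-paper argument to compare against. The route you take is in fact forced: identifying $E_\mathbb{G}$ as the $L^2_p(\Omega)$-orthogonal projection (via self-adjointness of $K_\mathbb{G}$ in the $L^2$ inner product and the fact that $\Gamma$ is centered on the real axis and excludes $0$) is the only mechanism that delivers the exact upper bound $\|E_\mathbb{G} u\|_{L^2}^2\le 1$ rather than $1+\mathcal{O}(\rho_\Omega(\mathbb{G})\delta_\mathbb{G}(\lambda))$, and once the Pythagorean identity is in hand the lower bound follows as you describe from the two estimates of the proposition preceding Proposition \ref{euandpu}, the normalization $\|u\|_a^2=\lambda$, and the monotonicity of $\rho_\Omega$ and $\delta_\mathbb{G}$ that absorbs one factor of $\rho_\Omega(\mathbb{G})\delta_\mathbb{G}(\lambda)$.
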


\begin{proposition}\label{elamdaiandj}
	Let $\mathbb{G}\supset \mathbb{G}^M$. For any $u_i,u_j\in M(\lambda)$ with $(u_i,u_j)=\delta_{ij}$, there holds
	\begin{equation*}
		(E_\mathbb{G} u_i,E_\mathbb{G} u_j) = \delta_{ij}+\mathcal {O}(\rho_\Omega(\mathbb{G})\delta_\mathbb{G}(\lambda))
	\end{equation*}
	provided $M\gg 1$.
\end{proposition}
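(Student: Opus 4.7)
The plan is straightforward: expand the inner product by writing $E_\mathbb{G} u_k = u_k - (u_k - E_\mathbb{G} u_k)$ for $k=i,j$, and then control the three remainder pieces using the $L^2$-error estimate already available from the previous propositions of this subsection.

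First, I would note that for any $u \in M(\lambda)$ with $\|u\|_{L^2}=1$, the weak eigenvalue equation gives $\|u\|_a = \sqrt{\lambda}$, so $\|u_i\|_a$ and $\|u_j\|_a$ are bounded by a constant depending only on $\lambda$. Combining the two inequalities of the first proposition of this subsection then yields, for $M \gg 1$ and $k=i,j$,
\begin{equation*}
\|u_k - E_\mathbb{G} u_k\|_{L^2} \;\lesssim\; \rho_\Omega(\mathbb{G})\,\|u_k - E_\mathbb{G} u_k\|_a \;\lesssim\; \rho_\Omega(\mathbb{G})\,\delta_\mathbb{G}(\lambda)\,\|u_k\|_a \;\lesssim\; \rho_\Omega(\mathbb{G})\,\delta_\mathbb{G}(\lambda),
\end{equation*}
with constants independent of $\mathbb{G}$.

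Next I would expand the inner product as
\begin{align*}
(E_\mathbb{G} u_i,E_\mathbb{G} u_j) &= \bigl(u_i - (u_i - E_\mathbb{G} u_i),\; u_j - (u_j - E_\mathbb{G} u_j)\bigr) \\
&= (u_i,u_j) - (u_i,u_j-E_\mathbb{G} u_j) - (u_i-E_\mathbb{G} u_i,u_j) \\
&\quad + (u_i-E_\mathbb{G} u_i,u_j-E_\mathbb{G} u_j).
\end{align*}
The first summand equals $\delta_{ij}$ by hypothesis. Applying Cauchy--Schwarz in $L_p^2(\Omega)$ together with $\|u_i\|_{L^2}=\|u_j\|_{L^2}=1$ and the display above shows that the two middle summands are each of order $\mathcal{O}(\rho_\Omega(\mathbb{G})\delta_\mathbb{G}(\lambda))$, while the last summand is of order $\mathcal{O}((\rho_\Omega(\mathbb{G})\delta_\mathbb{G}(\lambda))^2)$ and thus absorbed into the leading error.

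The only obstacle here is bookkeeping: one must check that the implicit constants in the cited estimates do not depend on $\mathbb{G}$ (which is precisely the content of the hypothesis $M \gg 1$ in those propositions), and verify that the quadratic remainder really is of strictly higher order than the stated $\mathcal{O}(\rho_\Omega(\mathbb{G})\delta_\mathbb{G}(\lambda))$. The latter is immediate from $\rho_\Omega(\mathbb{G})\delta_\mathbb{G}(\lambda) \to 0$ as $M\to \infty$ by Lemma \ref{rholimits0} and Lemma \ref{deltalimits0}. No deeper argument is needed beyond the previously established $L^2$ and energy-norm bounds for $u - E_\mathbb{G} u$.
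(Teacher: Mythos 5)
Your proof is correct and is the natural argument that the paper itself leaves implicit (the paper only cites \citet{dai2015convergence} and does not spell out the proof). The decomposition $E_\mathbb{G} u_k = u_k - (u_k - E_\mathbb{G} u_k)$, the $L^2$-bound $\|u_k - E_\mathbb{G} u_k\|_{L^2} \lesssim \rho_\Omega(\mathbb{G})\delta_\mathbb{G}(\lambda)$ obtained by chaining the two estimates in the preceding proposition with $\|u_k\|_a = \sqrt{\lambda}$, and the Cauchy--Schwarz bookkeeping with the quadratic remainder absorbed via $\rho_\Omega(\mathbb{G})\delta_\mathbb{G}(\lambda)\to 0$ are exactly what the cited reference does.
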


\section{Adaptive planewave method}\label{sec:APWM}
Here and hereafter, we consider the planewave approximation for a cluster of eigenvalues of \eqref{weakform}. For a cluster of eigenvalues $\lambda_{k_0+1}\le\cdots\le\lambda_{k_0+N}$ of \eqref{weakform}, we assume
\[
\lambda_{k_0}<\lambda_{k_0+1}\le\cdots\le\lambda_{k_0+N}<\lambda_{k_0+N+1}.
\]
Here, we take $\lambda_0=0$ if $k_0=0$. If not accounting the multiplicity, we assume that the $N$ eigenvalues $\lambda_{k_0+1},\ldots,\lambda_{k_0+N}$ belong to $m$ eigenvalues $\lambda_{(1)}<\lambda_{(2)}<\ldots<\lambda_{(m)}$ with the multiplicity of each eigenvalue being $q_i$. Thus $N=\sum_{i=1}^m q_i$.

Let $\lambda_{\mathbb{G},l}$ be the $(k_0+l)$th eigenvalue of \eqref{discreform} and $u_{\mathbb{G},l}$ be the eigenfunction corresponding to $\lambda_{\mathbb{G},l}(l=1,\ldots,N)$, respectively.
Set
$$M_\mathbb{G}(\lambda_{(i)})=\operatorname{span}\{u_{\mathbb{G},k_i+1},u_{\mathbb{G},k_i+2},\ldots,u_{\mathbb{G},k_i+q_i}\},$$
where $k_i = \sum_{j=1}^{i-1}q_i,\,i=1,2,\ldots,m$.

Let $\Gamma_i$ be a circle in the complex plane centered at $\lambda_{(i)}^{-1}$ and not enclosing any other eigenvalue of $K$. Set 
\[
E_{\mathbb{G},i}=E_\mathbb{G}(\lambda_{(i)})=\frac{1}{2\pi\mathrm{i}}\int_{\Gamma_i}(z-K_\mathbb{G})^{-1}dz.
\]
We see that $E_\mathbb{G}(\lambda_{(i)}):M(\lambda_{(i)})\to M_\mathbb{G}(\lambda_{(i)})$ is a bijection when $\mathbb{G}\supset \mathbb{G}^M$ and $M$ is large enough \citep[see, e.g.,][]{babuska1989finite,babuska1991eigenvalue}. Define
\[
\mathscr{M}=\begin{pmatrix}
	M(\lambda_{(1)})\\
	M(\lambda_{(2)})\\
	\vdots\\
	M(\lambda_{(m)})
\end{pmatrix},\quad
\mathscr{M}_{\mathbb{G}}=\begin{pmatrix}
	M_{\mathbb{G}}(\lambda_{(1)})\\
	M_\mathbb{G}(\lambda_{(2)})\\
	\vdots\\
	M_\mathbb{G}(\lambda_{(m)})
\end{pmatrix}
\]
and consider the following operator:
\[
\mathscr{E}_{\mathbb{G}}=\oplus_{i=1}^m E_{\mathbb{G}}(\lambda_{(i)}):\mathscr{M}\to \mathscr{M}_\mathbb{G}.
\]

For any $u\in M(\lambda_{(i)})$ with $\|u\|_{L^2}=1$, we see from $E_{\mathbb{G},i} u\in M_\mathbb{G}(\lambda_{(i)})$ that there exist some constants $\{\alpha^\mathbb{G}_{i,l}(u)\}_{l=1}^{q_i}$ such that
$$E_{\mathbb{G},i} u=\sum_{l=1}^{q_i} \alpha^\mathbb{G}_{i,l}(u) u_{\mathbb{G},k_i+l}.$$
Note that \eqref{weakform} and \eqref{discreform} can be rewritten as
\begin{equation*}
	u= \lambda Ku,\quad u_\mathbb{G}= \lambda_\mathbb{G} P_\mathbb{G} K u_\mathbb{G},
\end{equation*}
where $K$ and $P_\mathbb{G}$ are defined by \eqref{defoperaK} and
\eqref{galerpro}, respectively. Let 
\[
w^{\mathbb{G},i}=\sum_{l=1}^{q_i}\alpha^\mathbb{G}_{i,l}(u) \lambda_{\mathbb{G},k_i+l}Ku_{\mathbb{G},k_i+l},
\]
then
\begin{equation}\label{eq:Elambda_u}
	E_{\mathbb{G},i} u = P_\mathbb{G} w^{\mathbb{G},i}.
\end{equation}

Using the similar arguments in the proof of Theorem 3.1 of \citet{dai2015convergence}, we have the following theorem.

\begin{theorem}\label{thm:u-Eu-appro-w-Pw}
	Let $\mathbb{G}\supset \mathbb{G}^M$ and $\displaystyle \gamma_\mathbb{G}:=\rho_{\Omega}(\mathbb{G})+\max_{i=1,\ldots,m}\delta_\mathbb{G}(\lambda_{(i)})$, then for any $u\in M(\lambda_i)(i=1,2,\ldots,m)$ with $\|u\|_{L^2}=1$, there holds
	\begin{equation}\label{eigenbvp}
		\|u-E_{\mathbb{G},i} u\|_a=\|w^{\mathbb{G},i}-P_\mathbb{G} w^{\mathbb{G},i}\|_a+\mathcal {O}(\gamma_\mathbb{G})\|u-E_{\mathbb{G},i} u\|_a
	\end{equation}
	provided $M\gg 1$.
\end{theorem}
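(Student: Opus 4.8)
The plan is to transfer the perturbation argument used for Theorem~3.1 of \citet{dai2015convergence} to the planewave setting. Fix $i$ and $u\in M(\lambda_{(i)})$ with $\|u\|_{L^2}=1$. Since $E_{\mathbb{G},i}u=P_\mathbb{G}w^{\mathbb{G},i}$ by \eqref{eq:Elambda_u}, the first step is the splitting
\[
u-E_{\mathbb{G},i}u=\bigl(w^{\mathbb{G},i}-P_\mathbb{G}w^{\mathbb{G},i}\bigr)+\bigl(u-w^{\mathbb{G},i}\bigr);
\]
applying the triangle inequality in both directions, \eqref{eigenbvp} reduces to the consistency bound
\[
\|u-w^{\mathbb{G},i}\|_a\lesssim\gamma_\mathbb{G}\,\|u-E_{\mathbb{G},i}u\|_a.
\]

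To prove this, I would use $u=\lambda_{(i)}Ku$ together with $E_{\mathbb{G},i}u=\sum_{l=1}^{q_i}\alpha^\mathbb{G}_{i,l}(u)u_{\mathbb{G},k_i+l}$ to write
\[
u-w^{\mathbb{G},i}=\lambda_{(i)}K\bigl(u-E_{\mathbb{G},i}u\bigr)+\sum_{l=1}^{q_i}\alpha^\mathbb{G}_{i,l}(u)\bigl(\lambda_{(i)}-\lambda_{\mathbb{G},k_i+l}\bigr)Ku_{\mathbb{G},k_i+l},
\]
and estimate the two terms separately. For the first, the smoothing bound $\|Kv\|_a\lesssim\|v\|_{L^2}$ (immediate from $\|Kv\|_a^2=(v,Kv)$ and $\|Kv\|_{L^2}\lesssim\|Kv\|_a$) combined with the $L^2$--estimate $\|u-E_{\mathbb{G},i}u\|_{L^2}\lesssim\rho_\Omega(\mathbb{G})\|u-E_{\mathbb{G},i}u\|_a$ recalled above yields a contribution $\lesssim\rho_\Omega(\mathbb{G})\|u-E_{\mathbb{G},i}u\|_a\le\gamma_\mathbb{G}\|u-E_{\mathbb{G},i}u\|_a$. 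For the second term the available inputs are $\|Ku_{\mathbb{G},k_i+l}\|_a\lesssim\|u_{\mathbb{G},k_i+l}\|_{L^2}=1$, the bound $|\alpha^\mathbb{G}_{i,l}(u)|\le\|E_{\mathbb{G},i}u\|_{L^2}\le1$ from Proposition~\ref{elambdaul2}, and the eigenvalue estimate \eqref{lambdaineq}; the point requiring care is that one must not bound $|\lambda_{(i)}-\lambda_{\mathbb{G},k_i+l}|$ and $|\alpha^\mathbb{G}_{i,l}(u)|$ separately but keep each coefficient paired with its eigenvalue gap. A useful identity here is
\[
\alpha^\mathbb{G}_{i,l}(u)\bigl(\lambda_{(i)}-\lambda_{\mathbb{G},k_i+l}\bigr)=a\bigl(u-E_{\mathbb{G},i}u,u_{\mathbb{G},k_i+l}\bigr)-\lambda_{(i)}\bigl(u-E_{\mathbb{G},i}u,u_{\mathbb{G},k_i+l}\bigr),
\]
obtained from $a(u,u_{\mathbb{G},k_i+l})=\lambda_{(i)}(u,u_{\mathbb{G},k_i+l})$ and $a(E_{\mathbb{G},i}u,u_{\mathbb{G},k_i+l})=\alpha^\mathbb{G}_{i,l}(u)\lambda_{\mathbb{G},k_i+l}$. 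Using this identity, the $a$--orthogonality $a(u-P_\mathbb{G}u,u_{\mathbb{G},k_i+l})=0$, and the spectral approximation results (notably Proposition~\ref{euandpu} and the energy/$L^2$ bounds for $u-E_{\mathbb{G},i}u$), the second term can be brought to the form $\mathcal{O}(\gamma_\mathbb{G})\|u-E_{\mathbb{G},i}u\|_a$ up to contributions involving $\|u-w^{\mathbb{G},i}\|_a$ that are then absorbed on the left. Throughout, all hidden constants must be checked to be independent of $\mathbb{G}$.

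The handling of this eigenvalue--perturbation term is the step I expect to be the main obstacle: its crude size $\delta_\mathbb{G}(\lambda_{(i)})^2$ may exceed $\|u-E_{\mathbb{G},i}u\|_a$ for a particularly well-approximated eigenfunction of the cluster, so the consistency bound genuinely relies on the fine spectral estimates that are available only because $\mathbb{G}\supset\mathbb{G}^M$ with $M\gg1$, on the identity above, and on a careful bookkeeping of the cluster coefficients $\alpha^\mathbb{G}_{i,l}(u)$, exactly in the spirit of \citet{dai2015convergence}. Once the consistency bound is established, applying the reverse triangle inequality to the initial splitting completes the proof of \eqref{eigenbvp}.
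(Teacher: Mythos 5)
Your proposal is correct and follows essentially the same route as the paper. The paper does not give a self-contained proof but explicitly states that the identity $u-E_{\mathbb{G},i} u=\bigl(w^{\mathbb{G},i}-P_\mathbb{G} w^{\mathbb{G},i}\bigr)+\bigl(u-w^{\mathbb{G},i}\bigr)$ together with the consistency bound \eqref{uminusw}, $\|u-w^{\mathbb{G},i}\|_a\leq \tilde C\gamma_\mathbb{G}\|u-E_{\mathbb{G},i} u\|_a$, are what drive the argument (mirroring the proof of Theorem~3.1 in the cited reference); your splitting and your reduction to proving \eqref{uminusw} are exactly that. Your further decomposition
\[
u-w^{\mathbb{G},i}=\lambda_{(i)}K\bigl(u-E_{\mathbb{G},i}u\bigr)+\sum_{l=1}^{q_i}\alpha^\mathbb{G}_{i,l}(u)\bigl(\lambda_{(i)}-\lambda_{\mathbb{G},k_i+l}\bigr)Ku_{\mathbb{G},k_i+l}
\]
is algebraically correct, the smoothing bound $\|Kv\|_a\lesssim\|v\|_{L^2}$ handles the first term via $\|u-E_{\mathbb{G},i}u\|_{L^2}\lesssim\rho_\Omega(\mathbb{G})\|u-E_{\mathbb{G},i}u\|_a$, and your identity $\alpha^\mathbb{G}_{i,l}(u)(\lambda_{(i)}-\lambda_{\mathbb{G},k_i+l})=a(u-E_{\mathbb{G},i}u,u_{\mathbb{G},k_i+l})-\lambda_{(i)}(u-E_{\mathbb{G},i}u,u_{\mathbb{G},k_i+l})$ is verified by the eigen-relations $a(u,u_{\mathbb{G},k_i+l})=\lambda_{(i)}(u,u_{\mathbb{G},k_i+l})$ and $a(E_{\mathbb{G},i}u,u_{\mathbb{G},k_i+l})=\alpha^\mathbb{G}_{i,l}(u)\lambda_{\mathbb{G},k_i+l}$; together with Galerkin orthogonality and Proposition~\ref{euandpu} this yields the needed $\mathcal{O}(\rho_\Omega(\mathbb{G}))\|u-E_{\mathbb{G},i}u\|_a$ bound on the paired coefficient directly, so the absorption you mention is not even necessary. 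Your diagnosis of why one must keep $\alpha^\mathbb{G}_{i,l}(u)$ paired with its eigenvalue gap is precisely the subtle point of the cluster setting.
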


Note that $u-E_{\mathbb{G},i} u=w^{\mathbb{G},i}-P_\mathbb{G} w^{\mathbb{G},i}+u-w^{\mathbb{G},i}$ and 
\begin{equation}\label{uminusw}
	\|u-w^{\mathbb{G},i}\|_a\leq \tilde{C}\gamma_\mathbb{G}\|u-E_{\mathbb{G},i} u\|_a,\quad i=1,2,\ldots,m
\end{equation}
are applied in the proof of Theorem \ref{thm:u-Eu-appro-w-Pw}.

\begin{remark}
	Lemmas \ref{rholimits0} and \ref{deltalimits0} imply that $\gamma_{\mathbb{G}^{M}}\rightarrow 0$ as $M\rightarrow \infty$.
\end{remark}

To evaluate the error of the approximation, we introduce a distance between subspaces $X$ and $Y$ of $H_p^1(\Omega)$ as follows
\begin{equation*}
	\delta_{H_p^1(\Omega)}(X,Y):=\max\{ d_{H_p^1(\Omega)}(X,Y),d_{H_p^1(\Omega)}(Y,X)\},
\end{equation*}
where
\begin{equation*}
	d_{H_p^1(\Omega)}(X,Y):=\sup\limits_{u\in X,\|u\|_{a}=1}\inf\limits_{v\in Y}\|u-v\|_{a}.
\end{equation*}

For $d_{H_p^1(\Omega)}(X,Y)$ defined above, the following lemma holds \citep[see, e.g.,][]{babuska1991eigenvalue}.
\begin{lemma}\label{dxydyx}
	If $\dim X=\dim Y<\infty$, then $$d_{H_p^1(\Omega)}(Y,X)\leq d_{H_p^1(\Omega)}(X,Y)[1-d_{H_p^1(\Omega)}(X,Y)]^{-1}.$$
\end{lemma}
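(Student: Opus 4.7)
The plan is to reduce to standard Hilbert-space geometry using that $a(\cdot,\cdot)$ is an inner product on $H_p^1(\Omega)$ inducing $\|\cdot\|_a$. Set $d:=d_{H_p^1(\Omega)}(X,Y)$; the bound is vacuous if $d\ge 1$, so I would assume $d<1$. Let $P_X,P_Y$ denote the $a$-orthogonal projections onto $X$ and $Y$. Since $P_Y u$ realizes the infimum of $\|u-w\|_a$ over $w\in Y$, the definition of the gap rewrites as $d(X,Y)=\sup_{u\in X,\,\|u\|_a=1}\|u-P_Yu\|_a$, and analogously $d(Y,X)=\sup_{v\in Y,\,\|v\|_a=1}\|v-P_Xv\|_a$.

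The key step is to show that $P_Y$ restricted to $X$ is a bijection onto $Y$. Injectivity is immediate from $d<1$: if $u\in X$ satisfies $P_Yu=0$, then $\|u\|_a=\|u-P_Yu\|_a\le d\|u\|_a$ forces $u=0$. Since $\dim X=\dim Y<\infty$, injectivity yields surjectivity. This is the only place where the equality of finite dimensions is really used.

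With this bijection in hand, given any $v\in Y$ with $\|v\|_a=1$, I would select the unique $u\in X$ with $P_Yu=v$. Because $u\in X$ is admissible for approximating $v$ from $X$, the best-approximation property of $P_X$ gives $\|v-P_Xv\|_a\le\|v-u\|_a=\|u-P_Yu\|_a\le d\|u\|_a$. To control $\|u\|_a$, apply the triangle inequality $\|u\|_a\le\|u-v\|_a+\|v\|_a\le d\|u\|_a+1$, so $\|u\|_a\le(1-d)^{-1}$. Combining yields $\|v-P_Xv\|_a\le d/(1-d)$, and taking the supremum over such $v$ produces the claimed estimate.

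The only conceptually delicate point is the bijectivity step, which genuinely requires both finite-dimensionality and the equality $\dim X=\dim Y$; the remainder is routine best-approximation manipulation. I do not anticipate additional obstacles.
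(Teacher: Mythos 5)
Your proof is correct. The paper does not supply its own argument for this lemma but cites \citet{babuska1991eigenvalue}, and what you have reconstructed is precisely the standard Hilbert-space argument from that reference: pass to $a$-orthogonal projections, use $d<1$ plus equality of finite dimensions to show $P_Y|_X$ is a bijection onto $Y$, and then combine the best-approximation property of $P_X$ with the triangle-inequality bound $\|u\|_a\le(1-d)^{-1}$.
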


\subsection{A posteriori error estimators}
For any eigenpair approximation $(\lambda_{\mathbb{G},l},u_{\mathbb{G},l})~(l=1,2,\ldots,N)$, we define the residual $r(u_{\mathbb{G},l})$ by
\[
	r(u_{\mathbb{G},l})= \lambda_{\mathbb{G},l} u_{\mathbb{G},l}+\Delta u_{\mathbb{G},l}-V u_{\mathbb{G},l},\\
\]
Instead, for any $E_{\mathbb{G},i}u$ with $u\in M(\lambda_{(i)})$, we define the residual $r(E_{\mathbb{G},i} u)$ by
\[
r(E_{\mathbb{G},i} u)=\sum_{l=1}^{q_i}\alpha^\mathbb{G}_{i,l}(u)\lambda_{\mathbb{G},k_i+l}u_{\mathbb{G},k_i+l}+\Delta E_{\mathbb{G},i} u-V E_{\mathbb{G},i} u,
\]
The associated error estimators $\eta(u_{\mathbb{G},l};\mathbb{G}^*)$ and $\eta(E_{\mathbb{G},i} u;\mathbb{G}^*)$ for any $\mathbb{G}^*\in\mathbb{Z}^d$ are defined by
\begin{gather*}
	\eta(u_{\mathbb{G},l};\mathbb{G}^*)=\|\Pi_{\mathbb{G}^*}r(u_{\mathbb{G},l})\|_{H_p^{-1}},\quad\eta(E_{\mathbb{G},i} u;\mathbb{G}^*)=\|\Pi_{\mathbb{G}^*}r(E_{\mathbb{G},i} u)\|_{H_p^{-1}}.
\end{gather*}

For $U_\mathbb{G}=(u_{\mathbb{G},1},\dots,u_{\mathbb{G},N})\in (V_\mathbb{G})^N$, let
\begin{equation*}
	\eta^2(U_\mathbb{G};\mathbb{G}^*)=\sum_{l=1}^N\eta^2(u_{\mathbb{G},l};\mathbb{G}^*).
\end{equation*}
For $U=(U_1,\ldots,U_m)\in (H_p^1(\Omega))^N,\,U_i=(u_{k_i+1},\ldots,u_{k_i+q_i})\,(i=1,2,\ldots,m)$, let
\[
E_{\mathbb{G},i}U_i=(E_{\mathbb{G},i}u_{k_i+1},\ldots,E_{\mathbb{G},i}u_{k_i+q_i}),~\mathscr{E}_\mathbb{G} U=(E_{\mathbb{G},1}U_1,\ldots,E_{\mathbb{G},m}U_m)\in (V_\mathbb{G})^N
\]
and
\begin{equation*}
	\eta^2(\mathscr{E}_\mathbb{G} U;\mathbb{G}^*)=\sum_{i=1}^m\eta^2(E_{\mathbb{G},i} U_i;\mathbb{G}^*)=\sum_{i=1}^m\sum_{l=1}^{q_i}\eta^2(E_{\mathbb{G},i} u_{k_i+l};\mathbb{G}^*).
\end{equation*}

For simplicity, let $\eta(u_\mathbb{G})$, $\eta(U_\mathbb{G})$, $\eta(E_{\mathbb{G},i} U_i)$, and $\eta(\mathscr{E}_\mathbb{G} U)$ abbreviate $\eta(u_\mathbb{G};\mathbb{Z}^d)$, $\eta(U_\mathbb{G};\mathbb{Z}^d)$, $\eta(E_{\mathbb{G},i} U_i;\mathbb{Z}^d)$, and $\eta(\mathscr{E}_\mathbb{G} U;\mathbb{Z}^d)$, respectively.

\begin{theorem}\label{posteruelambdau}
	Let $\mathbb{G}\supset\mathbb{G}^{M_0}$. There exist constants $C_1$ and $C_2$, which only depend on the coercivity constant $\alpha_*$ and the continuity constant $\alpha^*$, such that 
	\begin{equation*}
		C_1\eta(E_{\mathbb{G},i} u)\leq
		\|u-E_{\mathbb{G},i} u\|_a\leq C_2\eta(E_{\mathbb{G},i} u)\quad \forall u\in M(\lambda_{(i)}),~i=1,2,\ldots,m
	\end{equation*}
	provided $M_0\gg 1$.
\end{theorem}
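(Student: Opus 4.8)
The plan is to reduce the eigenvalue a posteriori bound to the source-problem estimate \eqref{estimatbvp} via the identity \eqref{eq:Elambda_u}, which expresses $E_{\mathbb{G},i}u$ as the Galerkin projection $P_\mathbb{G} w^{\mathbb{G},i}$ of the explicit function $w^{\mathbb{G},i}=\sum_{l=1}^{q_i}\alpha^\mathbb{G}_{i,l}(u)\lambda_{\mathbb{G},k_i+l}Ku_{\mathbb{G},k_i+l}$. Since $L w^{\mathbb{G},i}=\sum_{l=1}^{q_i}\alpha^\mathbb{G}_{i,l}(u)\lambda_{\mathbb{G},k_i+l}u_{\mathbb{G},k_i+l}$ (as $LK=\mathrm{Id}$), the residual of the source problem with right-hand side $f:=Lw^{\mathbb{G},i}$ at the Galerkin approximant $P_\mathbb{G} w^{\mathbb{G},i}=E_{\mathbb{G},i}u$ is exactly $\bar r(E_{\mathbb{G},i}u)=Lw^{\mathbb{G},i}-LE_{\mathbb{G},i}u = r(E_{\mathbb{G},i}u)$ — the eigenvalue-problem residual defined before the statement coincides with the source-problem residual for this particular source. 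Applying \eqref{estimatbvp} (with $N$ replaced by $q_i$, or rather the scalar version) therefore gives
\begin{equation*}
	\frac{1}{\sqrt{\alpha^*}}\,\eta(E_{\mathbb{G},i}u)\le \|w^{\mathbb{G},i}-P_\mathbb{G} w^{\mathbb{G},i}\|_a\le \frac{1}{\sqrt{\alpha_*}}\,\eta(E_{\mathbb{G},i}u).
\end{equation*}

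**Next**, I would convert the middle quantity $\|w^{\mathbb{G},i}-P_\mathbb{G} w^{\mathbb{G},i}\|_a$ into $\|u-E_{\mathbb{G},i}u\|_a$ using Theorem \ref{thm:u-Eu-appro-w-Pw}, which states precisely that $\|u-E_{\mathbb{G},i}u\|_a=\|w^{\mathbb{G},i}-P_\mathbb{G} w^{\mathbb{G},i}\|_a+\mathcal{O}(\gamma_\mathbb{G})\|u-E_{\mathbb{G},i}u\|_a$. Rearranging, $\|w^{\mathbb{G},i}-P_\mathbb{G} w^{\mathbb{G},i}\|_a = (1+\mathcal{O}(\gamma_\mathbb{G}))\|u-E_{\mathbb{G},i}u\|_a$, so for $M_0$ large enough that $|\mathcal{O}(\gamma_\mathbb{G})|\le \tfrac12$ (legitimate since $\gamma_{\mathbb{G}^{M_0}}\to 0$ and $\gamma_\mathbb{G}\le\gamma_{\mathbb{G}^{M_0}}$ for $\mathbb{G}\supset\mathbb{G}^{M_0}$), one has $\tfrac12\|u-E_{\mathbb{G},i}u\|_a\le\|w^{\mathbb{G},i}-P_\mathbb{G} w^{\mathbb{G},i}\|_a\le\tfrac32\|u-E_{\mathbb{G},i}u\|_a$. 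Combining with the displayed two-sided bound yields
\begin{equation*}
	\frac{2}{3\sqrt{\alpha^*}}\,\eta(E_{\mathbb{G},i}u)\le \|u-E_{\mathbb{G},i}u\|_a\le \frac{2}{\sqrt{\alpha_*}}\,\eta(E_{\mathbb{G},i}u),
\end{equation*}
so one may take $C_1=2/(3\sqrt{\alpha^*})$ and $C_2=2/\sqrt{\alpha_*}$, both depending only on $\alpha_*,\alpha^*$ as required.

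**The main point to check carefully** is the assertion that the eigenvalue residual $r(E_{\mathbb{G},i}u)$ really equals the source residual $\bar r(P_\mathbb{G} w^{\mathbb{G},i})$ for the source $f=Lw^{\mathbb{G},i}$, i.e. that the $L^2$-terms match: $\sum_l\alpha^\mathbb{G}_{i,l}(u)\lambda_{\mathbb{G},k_i+l}u_{\mathbb{G},k_i+l}$ appears both as $Lw^{\mathbb{G},i}$ and as the explicitly-declared first term of $r(E_{\mathbb{G},i}u)$ in the definition preceding the theorem. This is consistent by construction, but it relies on $Ku_{\mathbb{G},k_i+l}\in H_p^2(\Omega)$ so that $L$ may be applied pointwise and the identity $LK=\mathrm{Id}$ used (valid since $u_{\mathbb{G},k_i+l}\in V_\mathbb{G}\subset L^2_p$ and $K:L^2_p\to H^2_p$). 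One should also confirm that the generic constant hidden in $\mathcal{O}(\gamma_\mathbb{G})$ in Theorem \ref{thm:u-Eu-appro-w-Pw} is uniform over $u\in M(\lambda_{(i)})$ with $\|u\|_{L^2}=1$ and over all admissible $\mathbb{G}\supset\mathbb{G}^{M_0}$; this is implicit in the statement there (the constants are independent of $\mathbb{G}$) but worth stating, since it is what lets a single $M_0$ serve for all $i=1,\dots,m$ simultaneously (take the max of the finitely many thresholds). No genuine obstacle is anticipated — the proof is essentially a bookkeeping assembly of \eqref{estimatbvp}, \eqref{eq:Elambda_u}, and Theorem \ref{thm:u-Eu-appro-w-Pw}.
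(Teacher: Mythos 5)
Your proof is correct and follows essentially the same route as the paper's: identify $E_{\mathbb{G},i}u=P_\mathbb{G} w^{\mathbb{G},i}$ so that the eigenvalue residual is a source residual, apply \eqref{estimatbvp} to get the two-sided bound on $\|w^{\mathbb{G},i}-P_\mathbb{G} w^{\mathbb{G},i}\|_a$, and convert to $\|u-E_{\mathbb{G},i}u\|_a$ via Theorem \ref{thm:u-Eu-appro-w-Pw}. The paper records the slightly sharper constants $C_1=1/(\sqrt{\alpha^*}(1+\tilde C\gamma_{\mathbb{G}^{M_0}}))$ and $C_2=1/(\sqrt{\alpha_*}(1-\tilde C\gamma_{\mathbb{G}^{M_0}}))$ directly from \eqref{uminusw} rather than fixing the perturbation at $1/2$, but this is cosmetic.
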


\begin{proof}
	Note that $Lw^{\mathbb{G},i}=\sum_{l=1}^{q_i}\alpha^\mathbb{G}_{i,l}(u) \lambda_{\mathbb{G},k_i+l}u_{\mathbb{G},k_i+l}$. It follows from \eqref{estimatbvp} that
	\begin{equation*}
		\frac{1}{\sqrt{a^*}}\eta(E_{\mathbb{G},i} u)\leq
		\|w^{\mathbb{G},i}-P_\mathbb{G} w^{\mathbb{G},i}\|_a\leq\frac{1}{\sqrt{a_*}}\eta(E_{\mathbb{G},i} u),
	\end{equation*}
	which together with \eqref{eigenbvp} completes the proof. In particular, we can choose the constants $C_1$ and $C_2$ satisfying
	\begin{equation*}
		C_1=\frac{1}{\sqrt{a^*}(1+\tilde{C}\gamma_{\mathbb{G}^{M_0}})},\quad C_2=\frac{1}{\sqrt{a_*}(1-\tilde{C}\gamma_{\mathbb{G}^{M_0}})},
	\end{equation*}
	where $\tilde{C}$ is introduced in \eqref{uminusw}.
\end{proof}

Motivated by \citet{dai2015convergence} and \citet{bonito2016convergence}, we obtain the following result.
\begin{lemma}\label{estimator-U_Lambda-E_Lambda U}
	Let $\mathbb{G}\supset \mathbb{G}^{M_0}$. Then, for any orthonormal basis $\{u_{k_i+l}\}_{l=1}^{q_i}$ of $M(\lambda_{(i)})~(i=1,2,\ldots,m)$, there holds
	\begin{equation*}
		\frac{1}{2}\eta^2(U_\mathbb{G})\leq\eta^2(\mathscr{E}_\mathbb{G} U)\leq \frac{3}{2}\eta^2(U_\mathbb{G}),
	\end{equation*}
	provided $M_0\gg 1$, where $U_\mathbb{G}=(u_{\mathbb{G},1},\ldots,u_{\mathbb{G},N})$ and
	\[
	\mathscr{E}_\mathbb{G} U=(E_{\mathbb{G},1} u_{k_1+1},\dots,E_{\mathbb{G},1}u_{k_1+q_1},\ldots,E_{\mathbb{G},m} u_{k_m+1},\dots,E_{\mathbb{G},m}u_{k_m+q_m}).
	\]
	Consequently,
	\begin{equation}\label{uandeu}
		\eta^2(U_\mathbb{G})\simeq
		\eta^2(\mathscr{E}_\mathbb{G} U).
	\end{equation}
\end{lemma}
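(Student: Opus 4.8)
The plan is to reduce the two-sided comparison to an almost-orthogonality statement for the change-of-basis matrix on each cluster eigenspace, together with an \emph{exact} identity relating the two families of residuals. Fix $i\in\{1,\ldots,m\}$ and write $A^{(i)}_{l,j}:=\alpha^{\mathbb{G}}_{i,j}(u_{k_i+l})$, so that $E_{\mathbb{G},i}u_{k_i+l}=\sum_{j=1}^{q_i}A^{(i)}_{l,j}u_{\mathbb{G},k_i+j}$. First I would establish the pointwise identity
\[
r(E_{\mathbb{G},i}u_{k_i+l})=\sum_{j=1}^{q_i}A^{(i)}_{l,j}\,r(u_{\mathbb{G},k_i+j}),\qquad l=1,\ldots,q_i ,
\]
which holds with no perturbation term: inserting $E_{\mathbb{G},i}u_{k_i+l}=\sum_j A^{(i)}_{l,j}u_{\mathbb{G},k_i+j}$ into the definition of $r(E_{\mathbb{G},i}u_{k_i+l})$ and using $Lu_{\mathbb{G},k_i+j}=\lambda_{\mathbb{G},k_i+j}u_{\mathbb{G},k_i+j}-r(u_{\mathbb{G},k_i+j})$, the terms $\sum_j A^{(i)}_{l,j}\lambda_{\mathbb{G},k_i+j}u_{\mathbb{G},k_i+j}$ cancel and the definition of $r(u_{\mathbb{G},\cdot})$ gives the claim.

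Next I would prove that $A^{(i)}$ is almost orthogonal. Since $\{u_{\mathbb{G},k_i+j}\}_{j=1}^{q_i}$ is $L^2$-orthonormal, $(E_{\mathbb{G},i}u_{k_i+l},E_{\mathbb{G},i}u_{k_i+l'})=(A^{(i)}(A^{(i)})^{\top})_{l,l'}$, so Proposition~\ref{elamdaiandj} yields $A^{(i)}(A^{(i)})^{\top}=I_{q_i}+\mathcal{O}(\rho_{\Omega}(\mathbb{G})\delta_{\mathbb{G}}(\lambda_{(i)}))$. Because $E_{\mathbb{G},i}:M(\lambda_{(i)})\to M_{\mathbb{G}}(\lambda_{(i)})$ is a bijection, $A^{(i)}$ is a $q_i\times q_i$ invertible matrix; the estimate above makes $\|A^{(i)}\|$ and $\|(A^{(i)})^{-1}\|$ bounded uniformly in $\mathbb{G}$ for $M_0$ large, and a short manipulation transfers the estimate to the other Gram matrix, $(A^{(i)})^{\top}A^{(i)}=I_{q_i}+\mathcal{O}(\rho_{\Omega}(\mathbb{G})\delta_{\mathbb{G}}(\lambda_{(i)}))$.

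Then, using the residual identity and expanding the $H_p^{-1}$-norm, I would compute
\[
\sum_{l=1}^{q_i}\eta^2(E_{\mathbb{G},i}u_{k_i+l})=\sum_{l=1}^{q_i}\Big\|\sum_{j=1}^{q_i}A^{(i)}_{l,j}r(u_{\mathbb{G},k_i+j})\Big\|_{H_p^{-1}}^2=\operatorname{tr}\big(G^{(i)}(A^{(i)})^{\top}A^{(i)}\big),
\]
where $G^{(i)}_{j,j'}=(r(u_{\mathbb{G},k_i+j}),r(u_{\mathbb{G},k_i+j'}))_{H_p^{-1}}$ is positive semidefinite with $\operatorname{tr}(G^{(i)})=\sum_{j=1}^{q_i}\eta^2(u_{\mathbb{G},k_i+j})$; hence
\[
\Big|\sum_{l=1}^{q_i}\eta^2(E_{\mathbb{G},i}u_{k_i+l})-\sum_{j=1}^{q_i}\eta^2(u_{\mathbb{G},k_i+j})\Big|\le\big\|(A^{(i)})^{\top}A^{(i)}-I_{q_i}\big\|\,\operatorname{tr}(G^{(i)})\lesssim\rho_{\Omega}(\mathbb{G})\delta_{\mathbb{G}}(\lambda_{(i)})\sum_{j=1}^{q_i}\eta^2(u_{\mathbb{G},k_i+j}).
\]
Summing over $i=1,\ldots,m$ and using $\rho_{\Omega}(\mathbb{G})\delta_{\mathbb{G}}(\lambda_{(i)})\le\gamma_{\mathbb{G}}^2$ together with $\mathbb{G}\supset\mathbb{G}^{M_0}$ and $\gamma_{\mathbb{G}^{M_0}}\to0$ as $M_0\to\infty$ (the Remark after Theorem~\ref{thm:u-Eu-appro-w-Pw}), the relative error between $\eta^2(\mathscr{E}_{\mathbb{G}}U)$ and $\eta^2(U_{\mathbb{G}})$ is at most $\tfrac12$ once $M_0\gg1$, which is exactly the asserted bound; \eqref{uandeu} is then immediate.

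The only genuinely non-routine point is controlling the perturbation of the Gram and transition matrices \emph{uniformly} in $\mathbb{G}$ and in the chosen orthonormal basis $\{u_{k_i+l}\}$, so that the spectral estimate $A^{(i)}(A^{(i)})^{\top}\approx I_{q_i}$ of Proposition~\ref{elamdaiandj} produces a bound on $(A^{(i)})^{\top}A^{(i)}$ and on $\operatorname{tr}(G^{(i)}((A^{(i)})^{\top}A^{(i)}-I_{q_i}))$ with a constant independent of $\mathbb{G}$; once the exact residual identity is in hand, everything else is elementary linear algebra and the smallness of $\gamma_{\mathbb{G}}$.
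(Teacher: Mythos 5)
Your proof is correct and follows essentially the same route as the paper's. The crux in both is (i) the exact residual identity $r(E_{\mathbb{G},i}u_{k_i+l})=\sum_j A^{(i)}_{l,j}\,r(u_{\mathbb{G},k_i+j})$ and (ii) the almost-orthogonality of the change-of-basis matrix coming from Proposition~\ref{elamdaiandj}; the only presentational difference is that the paper applies the matrix bound Fourier mode by Fourier mode and then sums with the weights $(1+|\bm{G}|^2)^{-1}$, estimating the eigenvalues of $M_iM_i^*$ via Ger\v sgorin's theorem, whereas you aggregate directly through the residual Gram matrix in $H_p^{-1}$ via the trace inequality $|\operatorname{tr}(G((A^{(i)})^{\top}A^{(i)}-I))|\le\|(A^{(i)})^{\top}A^{(i)}-I\|_2\operatorname{tr}(G)$ (using that $A A^{\top}$ and $A^{\top}A$ have the same spectrum for square $A$) --- mathematically equivalent and yielding the same constants.
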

\begin{proof}
	It follows from $M_\mathbb{G}(\lambda_{(i)})=\operatorname{span}\{u_{\mathbb{G},k_i+1},\ldots,u_{\mathbb{G},k_i+q_i}\}$ that there exist constants $\beta^{\mathbb{G},l}_{i,j}(j,l=1,\ldots,q_i)$ such that
	\begin{equation*}
		E_{\mathbb{G},i} u_{k_i+l}=\sum_{j=1}^{q_i} \beta^{\mathbb{G},l}_{i,j}u_{\mathbb{G},k_i+j},\quad l=1,2,\ldots,q_i.
	\end{equation*}
	Obviously $\beta^{\mathbb{G},l}_{i,j}=(E_{\mathbb{G},i} u_{k_i+l},u_{\mathbb{G},k_i+j}).$
	
	From the definition of $r(E_{\mathbb{G},i} u_{k_i+l})$, we get
	\begin{equation}\label{relaelambdau}
		\begin{aligned}
			r(E_{\mathbb{G},i} u_{k_i+l}) &=\sum_{j=1}^{q_i} \beta^{\mathbb{G},l}_{i,j}\lambda_{\mathbb{G},k_i+j}u_{\mathbb{G},k_i+j} +\Delta \left(\sum_{j=1}^{q_i} \beta^{\mathbb{G},l}_{i,j}u_{\mathbb{G},k_i+j}\right)- V\left(\sum_{j=1}^{q_i} \beta^{\mathbb{G},l}_{i,j}u_{\mathbb{G},k_i+j}\right)\\
			&=\sum_{j=1}^{q_i} \beta^{\mathbb{G},l}_{i,j}\left(\lambda_{\mathbb{G},k_i+j}u_{\mathbb{G},k_i+j}+\Delta u_{\mathbb{G},k_i+j}-V u_{\mathbb{G},k_i+j}\right)\\
			&=\sum_{j=1}^{q_i} \beta^{\mathbb{G},l}_{i,j} r(u_{\mathbb{G},k_i+j}),\quad i=1,2,\ldots,m,\, l=1,2,\ldots,q_i,
		\end{aligned}
	\end{equation}
	which indicates that $r(E_{\mathbb{G},i} u_{k_i+l})$ is a linear combination of  $r(u_{\mathbb{G},k_i+1}),\ldots,r(u_{\mathbb{G},k_i+q})$.
	
	We define the following matrices
	\[\tilde{V}_i\coloneqq[\tilde{v}_{k_i+l, \bm{G}}]\in\mathbb{R}^{q_i\times\infty},\quad V_i\coloneqq[v_{k_i+j,\bm{G}}]\in\mathbb{R}^{q_i\times\infty},\quad M_i\coloneqq [M_{k_i+l, k_i+j}]\in\mathbb{R}^{q_i\times q_i},
	\]
	where $\tilde{v}_{k_i+l, \bm{G}}=\hat{r}_{\bm{G}}(E_{\mathbb{G},i} u_{k_i+l})$, $v_{{k_i+j},\bm{G}}=\hat{r}_{\bm{G}}(u_{\mathbb{G},k_i+j})$, $M_{{k_i+l}, k_i+ j}=\beta^{\mathbb{G},l}_{i,j}$, $i=1,2,\ldots,m$, $j = 1,\ldots,q_i$, and $l=1,\ldots,q_i$. Thus \eqref{relaelambdau} can be written as
	\begin{equation*}
		\tilde{V}_i=M_i V_i,\quad i=1,2,\ldots,m.
	\end{equation*}
	Denote $\tilde{V}_{i,\bm{G}}$ by $[\tilde{v}_{k_i+1,\bm{G}},\ldots,\tilde{v}_{k_i+q_i,\bm{G}}]^T$ and
	$V_{i,\bm{G}}$ by $[v_{k_i+1,\bm{G}},\ldots,v_{k_i+q_i,\bm{G}}]^T$. Then for any $\bm{G}\in \mathbb{G}$, we have
	\begin{equation*}
		\tilde{V}_{i,\bm{G}}=M_i V_{i,\bm{G}},\quad i=1,2,\ldots,m.
	\end{equation*}
	Hence
	\begin{equation*}
		\|M_i^{-1}\|_2^{-1} \|V_{i,\bm{G}}\|_{2}\leq\|\tilde{V}_{i,\bm{G}}\|_{2}\leq\|M_i\|_2 \|V_{i,\bm{G}}\|_{2},
	\end{equation*}
	that is,
	\begin{equation*}
		\|M_i^{-1}\|_2^{-2} \sum_{j=1}^{q_i} |\hat{r}_{\bm{G}}(u_{\mathbb{G},k_i+j})|^2\leq\sum_{j=1}^{q_i} |\hat{r}_{\bm{G}}(E_{\mathbb{G},i} u_{k_i+j})|^2\leq\|M_i\|_2^2 \sum_{j=1}^{q_i} |\hat{r}_{\bm{G}}(u_{\mathbb{G},k_i+j})|^2.
	\end{equation*}
	Consequently,
	\begin{equation*}
		\|M_i^{-1}\|_2^{-2} \sum_{j=1}^{q_i} \frac{|\hat{r}_{\bm{G}}(u_{\mathbb{G},k_i+j})|^2}{1+|\bm{G}|^2}\leq\sum_{j=1}^{q_i} \frac{|\hat{r}_{\bm{G}}(E_{\mathbb{G},i} u_{k_i+j})|^2}{1+|\bm{G}|^2}\leq\|M_i\|_2^2 \sum_{j=1}^{q_i} \frac{|\hat{r}_{\bm{G}}(u_{\mathbb{G},k_i+j})|^2}{1+|\bm{G}|^2}.
	\end{equation*}
	Combining the definitions of $\eta^2(U_\mathbb{G})$ and $\eta^2(\mathscr{E}_\mathbb{G} U)$, we obtain
	\[
	\min_{i=1,\ldots,m}\|M_i^{-1}\|_2^{-2}\eta^2(U_\mathbb{G})\le \eta^2(\mathscr{E}_\mathbb{G} U)\le \max_{i=1,\ldots,m}\|M_i\|_2^2\eta^2(U_\mathbb{G}).
	\]
	
	Let $B_i = M_i M_i^*=[(E_{\mathbb{G},i} u_{k_i+l},E_{\mathbb{G},i} u_{k_i+j})]_{l,j=1}^{q_i}.$ We get from $$\|B_i\|_2=\|M_iM_i^*\|_2=\|M_i^*M_i\|_2=\|M_i\|_2^2$$
	that $\|M_i\|_2^2$  is equal to the largest eigenvalue of $B$.
	
	We see that Lemmas \ref{rholimits0} and \ref{deltalimits0} indicate $|\mathcal {O}(\rho_{\Omega}(\mathbb{G})\delta_\mathbb{G}(\lambda_{(i)}))|< \frac{1}{2q_i}$ provided $M_0\gg 1$. From Proposition \ref{elamdaiandj}, we get
	\begin{equation*}
		1-\frac{1}{2q_i}<1-|\mathcal {O}(\rho_{\Omega}(\mathbb{G})\delta_\mathbb{G}(\lambda_{(i)}))|\leq (B_i)_{jj}\leq 1+|\mathcal {O}(\rho_{\Omega}(\mathbb{G})\delta_\mathbb{G}(\lambda_{(i)}))|<1+\frac{1}{2q_i}
	\end{equation*}
	and
	\begin{equation*}
		\sum_{l\neq j}|(B_i)_{jl}|\leq (q_i-1)|\mathcal {O}(\rho_{\Omega}(\mathbb{G})\delta_\mathbb{G}(\lambda_{(i)}))|<\frac{q_i-1}{2q_i}.
	\end{equation*}
	Thus we obtain from Ger\v{s}gorin disc theorem that eigenvalues $\{\sigma_{i,j}\}_{j=1}^{q_i}$ of $B_i$ satisfy
	\begin{equation*}
		\frac{1}{2}=1- \frac{1}{2q_i}-\frac{q_i-1}{2q_i}\leq \sigma_{i,j}\leq 1+\frac{1}{2q_i}+\frac{q_i-1}{2q_i}=\frac{3}{2}, 1\leq j\leq q_i.
	\end{equation*}
	Therefore,
	\begin{equation*}
		\|M_i\|_2^2=\|B_i\|_2\leq \frac{3}{2},\quad i=1,2,\ldots,m,
	\end{equation*}
	which leads to
	\begin{equation*}
		\eta^2(\mathscr{E}_\mathbb{G} U)\leq \frac{3}{2}\eta^2(U_\mathbb{G}).
	\end{equation*}
	Similarly, 
	\begin{equation*}
		\|M_i^{-1}\|_2^2= \|(M_i^{-1})^*M_i^{-1}\|_2=\|B^{-1}\|_2=\frac{1}{\min_{1\leq j\leq q}\sigma_{i,j}}\leq 2,\quad i=1,2,\ldots,m,
	\end{equation*}
	which yields
	\begin{equation*}
		\frac{1}{2}\eta^2(U_\mathbb{G})\leq
		\eta^2(\mathscr{E}_\mathbb{G} U).
	\end{equation*}
\end{proof}

We turn to address the a posteriori error estimate for the distance between $\mathscr{M}$ and $\mathscr{M}_\mathbb{G}$.
\begin{theorem}
	Suppose $\mathbb{G}\supset\mathbb{G}^{M_0}$. Let $\lambda_{k_0+1}\le\lambda_{k_0+2}\le\cdots\le\lambda_{k_0+N}$ be a cluster of eigenvalues of \eqref{weakform}, which are $m$ eigenvalues $\lambda_{(1)}<\lambda_{(2)}<\cdots<\lambda_{(m)}$ with the corresponding eigenspace $M(\lambda_{(i)})$ if not accounting the multiplicity. Assume that the multiplicity of each eigenvalue is $q_i$ and satisfies $N=\sum_{i=1}^m q_i$. If $\mathscr{M}_{\mathbb{G}}$ is the planewave approximation of $\mathscr{M}$, then
	\begin{equation*}
		\delta_{H_p^1(\Omega)}(\mathscr{M},\mathscr{M}_{\mathbb{G}})\cong \eta(U_\mathbb{G})
	\end{equation*}
	provided $M_0\gg 1$, where $U_\mathbb{G}=(u_{\mathbb{G},1},\ldots,u_{\mathbb{G},N})$ and
	\[
	\delta^2_{H_p^1(\Omega)}(\mathscr{M},\mathscr{M}_{\mathbb{G}})=\sum_{i=1}^m \delta^2_{H_p^1(\Omega)}(M(\lambda_{(i)}),M_{\mathbb{G}}(\lambda_{(i)})).
	\]
\end{theorem}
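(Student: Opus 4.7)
The plan is to prove this equivalence by reducing it cluster-by-cluster to the single-eigenvalue estimate already provided by Theorem~\ref{posteruelambdau}, and then gluing the pieces together with the estimator equivalence of Lemma~\ref{estimator-U_Lambda-E_Lambda U}. Concretely, I would use Lemma~\ref{estimator-U_Lambda-E_Lambda U} to replace $\eta(U_\mathbb{G})$ by $\eta(\mathscr{E}_\mathbb{G} U)$ and then show that for each $i$,
\begin{equation*}
\sum_{l=1}^{q_i}\|u_{k_i+l}-E_{\mathbb{G},i}u_{k_i+l}\|_a^2 \cong \delta^2_{H_p^1(\Omega)}(M(\lambda_{(i)}),M_\mathbb{G}(\lambda_{(i)})),
\end{equation*}
since the middle quantity is $\cong \eta^2(E_{\mathbb{G},i}U_i)$ by Theorem~\ref{posteruelambdau} applied term-by-term. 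Summing in $i$ then yields the theorem.

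For the direction $\delta_{H_p^1(\Omega)}(\mathscr{M},\mathscr{M}_\mathbb{G})\lesssim \eta(U_\mathbb{G})$, I would fix any $u\in M(\lambda_{(i)})$ with $\|u\|_a=1$ and expand $u=\sum_{l=1}^{q_i}c_l u_{k_i+l}$ in the $L^2$-orthonormal basis. Since $\|u\|_a^2=\lambda_{(i)}\|u\|_{L^2}^2$, the coefficients $c_l$ are uniformly bounded. Then
\begin{equation*}
\inf_{v\in M_\mathbb{G}(\lambda_{(i)})}\|u-v\|_a\le\Bigl\|u-E_{\mathbb{G},i}u\Bigr\|_a \le \Bigl(\sum_{l=1}^{q_i}|c_l|^2\Bigr)^{1/2}\Bigl(\sum_{l=1}^{q_i}\|u_{k_i+l}-E_{\mathbb{G},i}u_{k_i+l}\|_a^2\Bigr)^{1/2},
\end{equation*}
and applying Theorem~\ref{posteruelambdau} to each summand gives $d_{H_p^1(\Omega)}(M(\lambda_{(i)}),M_\mathbb{G}(\lambda_{(i)}))\lesssim \eta(E_{\mathbb{G},i}U_i)$. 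The reverse direction $d_{H_p^1(\Omega)}(M_\mathbb{G}(\lambda_{(i)}),M(\lambda_{(i)}))\lesssim d_{H_p^1(\Omega)}(M(\lambda_{(i)}),M_\mathbb{G}(\lambda_{(i)}))$ follows from Lemma~\ref{dxydyx} (using that the $d$-terms vanish as $M_0\to\infty$, so $1-d$ is bounded below by a positive constant for $M_0\gg 1$). Squaring, summing in $i$, and invoking Lemma~\ref{estimator-U_Lambda-E_Lambda U} deliver the upper estimate.

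For the direction $\eta(U_\mathbb{G})\lesssim \delta_{H_p^1(\Omega)}(\mathscr{M},\mathscr{M}_\mathbb{G})$, I would again work cluster by cluster. For $u_{k_i+l}$ (with $\|u_{k_i+l}\|_a=\sqrt{\lambda_{(i)}}$), Proposition~\ref{euandpu} gives
\begin{equation*}
\|u_{k_i+l}-E_{\mathbb{G},i}u_{k_i+l}\|_a \le (1+\mathcal{O}(\nu(\mathbb{G})))\,\|u_{k_i+l}-P_\mathbb{G} u_{k_i+l}\|_a,
\end{equation*}
and since $M_\mathbb{G}(\lambda_{(i)})\subset V_\mathbb{G}$, the Galerkin quasi-best term satisfies $\|u_{k_i+l}-P_\mathbb{G} u_{k_i+l}\|_a\le \inf_{v\in M_\mathbb{G}(\lambda_{(i)})}\|u_{k_i+l}-v\|_a$, which is in turn bounded by $\sqrt{\lambda_{(i)}}\,d_{H_p^1(\Omega)}(M(\lambda_{(i)}),M_\mathbb{G}(\lambda_{(i)}))$ after rescaling to the unit $a$-sphere. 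Summing over $l$ (there are only $q_i$ terms) and over $i$, and applying Theorem~\ref{posteruelambdau} and Lemma~\ref{estimator-U_Lambda-E_Lambda U} in reverse completes the lower estimate.

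The main obstacle I foresee is keeping the bookkeeping between the two natural normalizations straight: the a posteriori estimator and the spectral bijection $E_{\mathbb{G},i}$ are naturally stated for the $L^2$-orthonormal basis $\{u_{k_i+l}\}$, whereas $d_{H_p^1(\Omega)}$ is defined through the unit $a$-norm sphere. Handling the rescaling requires that $\lambda_{(i)}$ be bounded away from zero and infinity (which holds since we fixed a finite cluster), and verifying that taking $\inf$ over $M_\mathbb{G}(\lambda_{(i)})$ instead of all of $V_\mathbb{G}$ in the Galerkin comparison only costs a factor of $1+\mathcal{O}(\nu(\mathbb{G}))$, guaranteed by Proposition~\ref{euandpu} for $M_0\gg 1$. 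Once these normalization constants are tracked, the remaining steps are essentially an assembly of the lemmas already in place.
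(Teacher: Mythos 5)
Your proposal is correct and follows essentially the same route as the paper's own proof: both reduce to the single-eigenspace estimates via Theorem~\ref{posteruelambdau} and Proposition~\ref{euandpu}, use the inclusion $M_\mathbb{G}(\lambda_{(i)})\subset V_\mathbb{G}$ to compare against the Galerkin projection, symmetrize the one-sided gap with Lemma~\ref{dxydyx}, and glue the clusters together with the estimator equivalence \eqref{uandeu} from Lemma~\ref{estimator-U_Lambda-E_Lambda U}. The paper is somewhat terser and suppresses the $L^2$-versus-$a$-normalization bookkeeping that you spell out, but the logical skeleton is identical.
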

\begin{proof}
	Let $\{u_{k_i+1},\ldots,u_{k_i+q_i}\}$ be an orthonormal basis of $M(\lambda_{(i)})$ and set $U_i= (u_{k_i+1},\ldots,u_{k_i+q_i})$ for $i=1,2,\ldots,m$. On the one hand,
	\begin{equation*}
		\begin{aligned}
			\sup\limits_{u\in M(\lambda_{(i)}),\|u\|_{a}=1}\inf\limits_{v\in M_\mathbb{G}(\lambda_{(i)})}\|u-v\|_{a}
			&\leq \sup\limits_{u\in M(\lambda_{(i)}),\|u\|_{a}=1}\|u-E_{\mathbb{G},i} u\|_{a}\\
			&\lesssim \max_{l=k_i+1,\ldots,k_i+q_i}\|u_l-E_{\mathbb{G},i} u_l\|_{a}\\
			&\lesssim \|U_i-E_{\mathbb{G},i} U_i\|_{a}.
		\end{aligned}
	\end{equation*}
	On the other hand,
	\begin{equation*}
		\begin{aligned}
			\sup\limits_{u\in M(\lambda_{(i)}),\|u\|_{a}=1}\inf\limits_{v\in M_\mathbb{G}(\lambda_{(i)})}\|u-v\|_{a}&\ge \sup\limits_{u\in M(\lambda_{(i)}),\|u\|_{a}=1}\|u-P_\mathbb{G} u\|_{a}\\
			&\gtrsim \max_{l=k_i+1,\ldots,k_i+q_i}\|u_l-P_\mathbb{G} u_l\|_{a}\\
			&\gtrsim \|U_i-P_\mathbb{G} U_i\|_{a}.
		\end{aligned}
	\end{equation*}
	We obtain from Theorem \ref{posteruelambdau} that
	\begin{equation*}
		\eta(E_{\mathbb{G},i} U_i)\lesssim\|U_i-E_{\mathbb{G},i} U_i\|_a\lesssim\eta(E_{\mathbb{G},i} U_i)
	\end{equation*}
	provided $M_0\gg 1$. Hence, from Proposition \ref{euandpu}, we have
	\begin{equation*}
		\eta(E_{\mathbb{G},i} U_i)\lesssim\|U_i-P_\mathbb{G} U_i\|_a
	\end{equation*}
	provided $M_0\gg 1$. Consequently,
	\begin{equation*}
		\eta(E_{\mathbb{G},i} U_i)\lesssim\sup\limits_{u\in M(\lambda_{(i)}),\|u\|_{a}=1}\inf\limits_{v\in M_\mathbb{G}(\lambda_{(i)})}\|u-v\|_{a}\lesssim \eta(E_{\mathbb{G},i} U_i),
	\end{equation*}
	namely,
	\[
	\eta(E_{\mathbb{G},i} U_i)\lesssim d_{H_p^1(\Omega)}(M(\lambda_{(i)}),M_{\mathbb{G}}(\lambda_{(i)}))\lesssim \eta(E_{\mathbb{G},i} U_i),
	\]
	which together with Lemma \ref{dxydyx} and $\dim M(\lambda_{(i)})=\dim M_\mathbb{G}(\lambda_{(i)})$ leads to
	\[
	\eta(E_{\mathbb{G},i} U_i)\lesssim \delta_{H_p^1(\Omega)}(M(\lambda_{(i)}),M_{\mathbb{G}}(\lambda_{(i)}))\lesssim \eta(E_{\mathbb{G},i} U_i).
	\]
	Combining \eqref{uandeu} and the above inequality, we arrive at
	\begin{equation*}
		\eta(U_\mathbb{G})\lesssim \delta_{H_p^1(\Omega)}(\mathscr{M},\mathscr{M}_{\mathbb{G}})\lesssim\eta(U_\mathbb{G}).
	\end{equation*}
\end{proof}

\subsection{Adaptive algorithm}
We now design an adaptive planewave algorithm with the D\"{o}rfler marking strategy for solving \eqref{weakform} as follows:
\begin{algorithm}[H]
	\caption{Adaptive planewave algorithm}
	\label{algorithmeigen}
	\begin{algorithmic}[1]
		\State Choose parameters $\theta\in(0,1)$, $tol\in[0,1)$ and $M_0\in\mathbb{N}$;
		\State Set the initial index set $\mathbb{G}_0=\mathbb{G}^{M_0}$ and $n=0$;
		\State\label{algostate:solveAPWM} Solve \eqref{discreform} with $\mathbb{G}$ being replaced by $\mathbb{G}_n$ to get the discrete solution $(\lambda_{\mathbb{G}_n,l},u_{\mathbb{G}_n,l})(l=1,\ldots,N)$;
		\State Compute the error estimator $\eta(U_{\mathbb{G}_{n}})$;
		\State If $\eta(U_{\mathbb{G}_{n}})<tol$, then stop;
		\State\label{algo-state:construct} Construct $\delta \mathbb{G}_n=\operatorname{\text{D\"ORFLER}}(\eta,U_{\mathbb{G}_n},\mathbb{G}_n,\theta)$;
		\State Set $\mathbb{G}_{n+1}=\mathbb{G}_n\cup\delta\mathbb{G}_n$;
		\State Let $n=n+1$ and go to Step \ref{algostate:solveAPWM}.
	\end{algorithmic}
\end{algorithm}

It follows from \eqref{discreform} that $\hat{r}_{\bm{G}}(u_{\mathbb{G},l})=0$ for any $l=1,2,\ldots,N$ and any $\bm{G}\in\mathbb{G}$. Thus, we have 
\begin{equation}\label{eq:deltaG-Gc}
	\eta(U_\mathbb{G};\delta \mathbb{G})=\eta(U_\mathbb{G};\mathbb{G}\cup\delta \mathbb{G}),
\end{equation}
which means that $\delta \mathbb{G}_n$ in \ref{algo-state:construct}-th step of Algorithm \ref{algorithmeigen} can always be constructed although $\delta \mathbb{G}_n\subset \mathbb{G}^c_n$.

We observe that $\eta(U_{\mathbb{G}})$ is a summation with infinite terms because of $\bm{G}\in\mathbb{Z}^d$, which means that $\eta(U_{\mathbb{G}})$ is uncomputable. As a result, we have to introduce a new estimator so that the adaptive algorithm can be executable. We choose an approximation $\tilde{r}(U_{\mathbb{G}})$ of $r(U_{\mathbb{G}})$ with finite Fourier expansions and expect that it holds for a given $\zeta\in(0,1)$ that
\begin{equation}\label{ineq:approx-r}
	\|\tilde{r}(U_{\mathbb{G}})-r(U_{\mathbb{G}})\|_{H_p^{-1}}\le\zeta\|\tilde{r}(U_{\mathbb{G}})\|_{H_p^{-1}}.
\end{equation}
Therefore, we easily get the following inequalities:
\[
(1-\zeta)\|\tilde{r}(U_{\mathbb{G}})\|_{H_p^{-1}}\le\|r(U_{\mathbb{G}})\|_{H_p^{-1}}\le(1+\zeta)\|\tilde{r}(U_{\mathbb{G}})\|_{H_p^{-1}}.
\]
We define a new error estimator as
\[
\tilde{\eta}(U_\mathbb{G})=\|\tilde{r}(U_\mathbb{G})\|_{H_p^{-1}},
\]
which is computable.

We then discuss how to choose $\tilde{r}$ so that \eqref{ineq:approx-r} is satisfied.
\begin{lemma}\label{lem:approx}
	Let $M_1$ and $M_2$ be two positive integers. Then
	\[
	\|Vu-(\Pi_{\mathbb{G}^{M_2}} V)u\|_{H_p^{-1}}\lesssim  \frac{3^d M_1^{d/6}}{M_2^{\sigma}}\|V\|_{H_p^\sigma}\|u\|_{L^2}\quad\forall u\in V_{\mathbb{G}^{M_1}}.
	\]
\end{lemma}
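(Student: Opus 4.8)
The plan is to estimate $\|Vu-(\Pi_{\mathbb{G}^{M_2}}V)u\|_{H_p^{-1}}$ by writing $V-\Pi_{\mathbb{G}^{M_2}}V = \sum_{|\bm{G}|>M_2}\hat{V}_{\bm{G}}e_{\bm{G}}$ and expanding $u\in V_{\mathbb{G}^{M_1}}$ as $u=\sum_{|\bm{G}'|\le M_1}\hat{u}_{\bm{G}'}e_{\bm{G}'}$. The product of two plane waves is again a plane wave, $e_{\bm{G}}e_{\bm{G}'}=(2\pi)^{-d/2}e_{\bm{G}+\bm{G}'}$, so $(V-\Pi_{\mathbb{G}^{M_2}}V)u$ has Fourier coefficients that are convolution-type sums. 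To bound the $H_p^{-1}$-norm I would use duality: $\|w\|_{H_p^{-1}}=\sup\{(w,\varphi):\varphi\in H_p^1,\ \|\varphi\|_{H_p^1}=1\}$, or equivalently estimate $\sum_{\bm{G}''}(1+|\bm{G}''|^2)^{-1}|\widehat{(V-\Pi V)u}_{\bm{G}''}|^2$ directly.

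First I would note that in the sum defining the Fourier coefficients of $(V-\Pi_{\mathbb{G}^{M_2}}V)u$, the only surviving terms pair a frequency $\bm{G}$ with $|\bm{G}|>M_2$ against a frequency $\bm{G}'$ with $|\bm{G}'|\le M_1$; hence the output frequency $\bm{G}''=\bm{G}+\bm{G}'$ satisfies $|\bm{G}''|\ge |\bm{G}|-|\bm{G}'| > M_2-M_1$, and in particular $|\bm{G}|\ge |\bm{G}''|-M_1$. The factor $1/M_2^\sigma$ should come from extracting $(1+|\bm{G}|^2)^{-\sigma/2}\le (1+M_2^2)^{-\sigma/2}\lesssim M_2^{-\sigma}$ from the tail of $V$, using $V\in H_p^\sigma$, i.e. $\sum_{|\bm{G}|>M_2}(1+|\bm{G}|^2)^\sigma|\hat{V}_{\bm{G}}|^2\le\|V\|_{H_p^\sigma}^2$. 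Then I would apply a Cauchy–Schwarz / Young-type inequality to the convolution, and here is where the $3^dM_1^{d/6}$ enters: the number of admissible $\bm{G}'$'s in the support of $\hat{u}$ is at most $|\mathbb{G}^{M_1}|$, which is comparable to $M_1^d$; but a more careful argument pairing output frequency against input frequency and counting lattice points in a ball of radius $M_1$ only where needed should reduce the loss. The combinatorial bookkeeping of exactly how the counting produces the exponent $d/6$ (rather than a cruder $d/2$) is the part I expect to require the most care — presumably one splits the frequency sum into dyadic shells and optimizes, or invokes a discrete Hölder inequality with a carefully chosen exponent so that the lattice-point count in a ball of radius $\lesssim M_1$ contributes only a fractional power.

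Concretely, the key steps, in order: (i) write $(V-\Pi_{\mathbb{G}^{M_2}}V)u$ in Fourier coefficients and identify the constraint $|\bm{G}|>M_2$, $|\bm{G}'|\le M_1$ on contributing index pairs; (ii) pull out the factor $M_2^{-\sigma}$ using $V\in H_p^\sigma$ together with $|\bm{G}|>M_2$; (iii) bound the remaining convolution sum in $H_p^{-1}$ by Cauchy–Schwarz in the $\bm{G}$-variable, producing $\|V\|_{H_p^\sigma}\|u\|_{L^2}$ up to a counting factor; (iv) estimate that counting factor by the number of lattice points involved, using that $u$ is supported in $\mathbb{G}^{M_1}$ and the fixed constraint $|\bm{G}''|\gtrsim M_2-M_1$, and optimize the Hölder exponents to obtain the power $M_1^{d/6}$ (the $3^d$ being a fixed lattice-packing constant, e.g. from covering a ball by unit cubes). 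The main obstacle is step (iv): getting the sharp exponent $d/6$ on $M_1$ rather than a naive $d/2$, which forces a nontrivial splitting-and-optimization argument on the frequency sums rather than a one-line Cauchy–Schwarz.
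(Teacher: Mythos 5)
Your outline and the paper's proof diverge substantially, and your step (iv) — the one you flag as the hard part — is indeed a genuine gap that you do not close. The paper does not do any convolution or lattice-point counting. Its argument is a four-line duality calculation: write $\|Vu-(\Pi_{\mathbb{G}^{M_2}}V)u\|_{H_p^{-1}}=\sup_{\|v\|_{H_p^1}=1}\langle (V-\Pi_{\mathbb{G}^{M_2}}V)u,v\rangle$ and apply the generalized H\"older inequality with the exponent triple $(2,3,6)$:
\[
\langle (V-\Pi_{\mathbb{G}^{M_2}}V)u,v\rangle \le \|V-\Pi_{\mathbb{G}^{M_2}}V\|_{L^2}\,\|u\|_{L^3}\,\|v\|_{L^6}.
\]
The three factors are then handled separately: Proposition \ref{H1L2error} (with $l=0$, $m=\sigma$) gives $\|V-\Pi_{\mathbb{G}^{M_2}}V\|_{L^2}\le M_2^{-\sigma}\|V\|_{H_p^\sigma}$; \emph{Nikolski's inequality} for trigonometric polynomials of degree $\le M_1$ gives $\|u\|_{L^3}\lesssim 3^d M_1^{d(1/2-1/3)}\|u\|_{L^2}=3^d M_1^{d/6}\|u\|_{L^2}$; and the Sobolev embedding $H_p^1\hookrightarrow L^6$ (valid for $d\le 3$) gives $\|v\|_{L^6}\lesssim\|v\|_{H_p^1}$. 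The exponent $d/6$ is therefore not the output of a dyadic/optimization argument over lattice sums — it is simply $d(1/2-1/3)$, falling out of the Bernstein–Nikolskii estimate once one chooses the H\"older split.

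This is precisely the tool you are missing. Your plan to extract $M_2^{-\sigma}$ from the tail of $V$ is fine and mirrors the $\|V-\Pi V\|_{L^2}$ estimate, but your step (iv) as described — Cauchy–Schwarz in the $\bm{G}$-variable plus counting lattice points in a ball of radius $M_1$ — produces $M_1^{d/2}$, and you correctly observe that you don't see how to improve it. The improvement to $d/6$ in a Fourier-side argument would effectively require you to rederive Nikolski's inequality via interpolation; it is cleaner, and is what the paper does, to switch to the physical-space H\"older argument and invoke Nikolski's inequality directly. So as a proof your proposal is incomplete: the key estimate is identified as a problem but not solved, and the intended decomposition (convolution of Fourier coefficients) is not the one that closes it.
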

\begin{proof}
	By the definition of $H_p^{-1}(\Omega)$ norm and Nikolski's inequality \citep[see][3.4.3 (3)]{nikolskii1975approximation}, we have
	\begin{align*}
		\|Vu-(\Pi_{\mathbb{G}^{M_2}} V)u\|_{H_p^{-1}}&=\sup_{\|v\|_{H_p^1}=1}\langle Vu-(\Pi_{\mathbb{G}^{M_2}} V)u, v\rangle\\
		&\le\sup_{\|v\|_{H_p^1}=1}\|V-\Pi_{\mathbb{G}^{M_2}} V\|_{L^2}\|u\|_{L^3}\|v\|_{L^6}\\
		&\lesssim\sup_{\|v\|_{H_p^1}=1}\frac{1}{M_2^{\sigma}}\|V\|_{H_p^\sigma}3^d M_1^{d/6}\|u\|_{L^2}\|v\|_{H_p^1}\\
		&\lesssim \frac{3^d M_1^{d/6}}{M_2^{\sigma}}\|V\|_{H_p^\sigma}\|u\|_{L^2}.
	\end{align*}
\end{proof}

Let $\tilde{r}(u_{\mathbb{G},l})=\lambda_{\mathbb{G},l}u_{\mathbb{G},l}+\Delta u_{\mathbb{G},l}-(\Pi_{\mathbb{G}^{M}} V)u_{\mathbb{G},l}$. Then we get from Lemma \ref{lem:approx} that
\[
\|\tilde{r}(u_{\mathbb{G},l})-r(u_{\mathbb{G},l})\|_{H_p^{-1}}=\|Vu_{\mathbb{G},l}-(\Pi_{\mathbb{G}^M} V)u_{\mathbb{G},l}\|_{H_p^{-1}}\le C\frac{3^d \max_{\bm{G}\in\mathbb{G}}|\bm{G}|^{d/6}}{M^{\sigma}}\|V\|_{H_p^\sigma}.
\]
Thus we can increase $M$ until that $\displaystyle C\frac{3^d \max_{\bm{G}\in\mathbb{G}}|\bm{G}|^{d/6}}{M^{\sigma}}\|V\|_{H_p^\sigma}\le\zeta\|\tilde{r}(u_{\mathbb{G},l})\|_{H_p^{-1}}$, which derives \eqref{ineq:approx-r}. Namely, we have provided a possible implementation to build $\tilde{r}(U_\mathbb{G})$ satisfying \eqref{ineq:approx-r}.

The following lemma  is an extension of Lemma 3.1 in \citet{canuto2014adaptive} from $N=1$ to any $N$.
\begin{lemma}\label{lem:estimatorEqui}
	Let $\theta,\tilde{\theta}\in(0,1)$ and $\mathbb{G}^*\subset\mathbb{Z}^d$ be a finite index set. If
	\begin{equation*}
		\tilde{\eta}(U_\mathbb{G};\mathbb{G}^*)\coloneqq\|\Pi_{\mathbb{G}^*}\tilde{r}(U_\mathbb{G})\|_{H_p^{-1}}\ge\tilde{\theta}\tilde{\eta}(U_\mathbb{G})
	\end{equation*}
	and $\zeta\in(0,\tilde{\theta})$, then
	\begin{equation*}\label{eq:mark}
		\eta(U_\mathbb{G};\mathbb{G}^*)\ge \theta\eta(U_\mathbb{G}),\quad\text{with }\theta=\frac{\tilde{\theta}-\zeta}{1+\zeta}.
	\end{equation*}
	On the other hand, if $\eta(U_\mathbb{G};\mathbb{G}^*)\ge \theta\eta(U_\mathbb{G})$ and $\zeta\in(0,\theta/(1+\theta))$, then
	\begin{equation*}\label{eq:newmark}
		\tilde{\eta}(U_\mathbb{G};\mathbb{G}^*)\ge\tilde{\theta}\tilde{\eta}(U_\mathbb{G}),\quad\text{with }\tilde{\theta}=\theta-\zeta(1+\theta).
	\end{equation*}
\end{lemma}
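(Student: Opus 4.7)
The plan is to combine the reverse triangle inequality in $H_p^{-1}(\Omega)$ with the global estimator equivalence
\[
(1-\zeta)\tilde{\eta}(U_\mathbb{G})\le \eta(U_\mathbb{G})\le (1+\zeta)\tilde{\eta}(U_\mathbb{G})
\]
that was already derived from \eqref{ineq:approx-r}. The auxiliary fact I will lean on is that the Fourier truncation $\Pi_{\mathbb{G}^*}$ is non-expansive in the $H_p^{-1}$ norm, since $\|\Pi_{\mathbb{G}^*}v\|_{H_p^{-1}}^2=\sum_{\bm{G}\in\mathbb{G}^*}(1+|\bm{G}|^2)^{-1}|\hat{v}_{\bm{G}}|^2$ cannot exceed the analogous sum over all of $\mathbb{Z}^d$. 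Applied to $v=\tilde{r}(U_\mathbb{G})-r(U_\mathbb{G})$, this yields the single inequality
\[
\|\Pi_{\mathbb{G}^*}(\tilde{r}(U_\mathbb{G})-r(U_\mathbb{G}))\|_{H_p^{-1}}\le \|\tilde{r}(U_\mathbb{G})-r(U_\mathbb{G})\|_{H_p^{-1}}\le \zeta\,\tilde{\eta}(U_\mathbb{G}),
\]
which is the common ingredient for both directions of the lemma.

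For the first direction I would write $\eta(U_\mathbb{G};\mathbb{G}^*)=\|\Pi_{\mathbb{G}^*}r(U_\mathbb{G})\|_{H_p^{-1}}$, compare with $\Pi_{\mathbb{G}^*}\tilde{r}(U_\mathbb{G})$ via the reverse triangle inequality, and use the hypothesis $\tilde{\eta}(U_\mathbb{G};\mathbb{G}^*)\ge\tilde{\theta}\tilde{\eta}(U_\mathbb{G})$ to obtain $\eta(U_\mathbb{G};\mathbb{G}^*)\ge (\tilde{\theta}-\zeta)\tilde{\eta}(U_\mathbb{G})$; then the lower bound $\tilde{\eta}(U_\mathbb{G})\ge \eta(U_\mathbb{G})/(1+\zeta)$ delivers $\theta=(\tilde{\theta}-\zeta)/(1+\zeta)$, positive precisely because the hypothesis $\zeta\in(0,\tilde{\theta})$ was imposed. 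For the converse, the symmetric manipulation gives $\tilde{\eta}(U_\mathbb{G};\mathbb{G}^*)\ge \eta(U_\mathbb{G};\mathbb{G}^*)-\zeta\tilde{\eta}(U_\mathbb{G})\ge \theta\eta(U_\mathbb{G})-\zeta\tilde{\eta}(U_\mathbb{G})$; substituting the lower bound $\eta(U_\mathbb{G})\ge(1-\zeta)\tilde{\eta}(U_\mathbb{G})$ collapses the right-hand side to $\bigl(\theta(1-\zeta)-\zeta\bigr)\tilde{\eta}(U_\mathbb{G})=\bigl(\theta-\zeta(1+\theta)\bigr)\tilde{\eta}(U_\mathbb{G})$, and the range $\zeta\in(0,\theta/(1+\theta))$ is exactly what is needed to keep $\tilde{\theta}>0$.

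I do not anticipate any real obstacle: the lemma is a clean perturbation statement whose entire content is packaged in the triangle inequality plus the non-expansiveness of $\Pi_{\mathbb{G}^*}$. The only bookkeeping care required is in deciding whether to invoke the upper half or the lower half of the global equivalence at each step so that the announced constants $\theta$ and $\tilde{\theta}$ drop out algebraically.
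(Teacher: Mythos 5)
Your proof is correct and is essentially the argument the paper implicitly invokes when it cites Lemma~3.1 of Canuto, Nochetto, Stevenson and Verani~(2014): the non-expansiveness of $\Pi_{\mathbb{G}^*}$ in the $H_p^{-1}$ norm combined with \eqref{ineq:approx-r} gives the uniform bound $\|\Pi_{\mathbb{G}^*}(\tilde{r}(U_\mathbb{G})-r(U_\mathbb{G}))\|_{H_p^{-1}}\le\zeta\tilde{\eta}(U_\mathbb{G})$, and the two stated constants then drop out by running the reverse triangle inequality together with the appropriate side of the global equivalence $(1-\zeta)\tilde{\eta}(U_\mathbb{G})\le\eta(U_\mathbb{G})\le(1+\zeta)\tilde{\eta}(U_\mathbb{G})$. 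The extension from $N=1$ to general $N$ is automatic because the vector norm $\|\cdot\|=(\sum_{l}\|\cdot\|^2)^{1/2}$ still satisfies the triangle inequality and $\Pi_{\mathbb{G}^*}$ acts componentwise.
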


Now we propose the following feasible adaptive planewave algorithm.
\begin{algorithm}[H]
	\caption{ Feasible adaptive planewave algorithm}
	\label{algo:FAPWM}
	\begin{algorithmic}[1]
		\State Choose parameters $\tilde{\theta}\in(0,1)$, $tol\in[0,1)$, $\zeta\in(0,\tilde{\theta})$ and $M_0\in\mathbb{N}$;
		\State Set the initial index set $\mathbb{G}_0=\mathbb{G}^{M_0}$ and $n=0$;
		\State\label{solveFAPWM} Solve \eqref{discreform} with $\mathbb{G}$ being replaced by $\mathbb{G}_n$ to get the discrete solution $(\lambda_{\mathbb{G}_n,l},u_{\mathbb{G}_n,l})(l=1,\ldots,N)$;
		\State Choose an approximation $\tilde{r}(U_{\mathbb{G}_n})$ of $r(U_{\mathbb{G}_n})$ such that
		\[
		\|\tilde{r}(U_{\mathbb{G}_n})-r(U_{\mathbb{G}_n})\|_{H_p^{-1}}\le\zeta\|\tilde{r}(U_{\mathbb{G}_n})\|_{H_p^{-1}};
		\]
		\State Compute the error estimator $\tilde{\eta}(U_{\mathbb{G}_{n}})$;
		\State If $\tilde{\eta}(U_{\mathbb{G}_{n}})<tol/(1+\zeta)$, then stop;
		\State Construct $\delta \mathbb{G}_n=\operatorname{\text{D\"ORFLER}}(\tilde{\eta},U_{\mathbb{G}_n},\mathbb{G}_n,\tilde{\theta})$;
		\State Set $\mathbb{G}_{n+1}=\mathbb{G}_n\cup\delta\mathbb{G}_n$;
		\State Let $n=n+1$ and go to Step \ref{solveFAPWM}.
	\end{algorithmic}
\end{algorithm}

\begin{lemma}\label{dolferre}
	Let $\mathbb{G}\supset\mathbb{G}^{M_0}$ and $\theta\in(0,1)$ be a given constant. If
	\begin{equation*}
		\eta(U_\mathbb{G};\mathbb{G}^*)\geq\theta\eta(U_\mathbb{G}),
	\end{equation*}
	then there exists a constant $\theta'\in (0,1)$, such that, for any orthonormal basis $\{u_{k_i+l}\}_{l=1}^{q_i}$ of $M(\lambda_{(i)})~(i=1,2,\ldots,m)$,
	\begin{equation*}
		\eta(\mathscr{E}_\mathbb{G} U;\mathbb{G}^*)\geq\theta'\eta(\mathscr{E}_\mathbb{G} U)
	\end{equation*}
	provided $M_0\gg 1$. Here $U=(u_1,\ldots,u_N)$.
\end{lemma}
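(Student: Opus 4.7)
The plan is to show that the local Dörfler-type condition assumed for $U_\mathbb{G}$ transfers to $\mathscr{E}_\mathbb{G} U$ by using a \emph{subset-wise} version of the equivalence already established in Lemma \ref{estimator-U_Lambda-E_Lambda U}. In other words, the key observation is that the equivalence $\eta^2(U_\mathbb{G})\simeq\eta^2(\mathscr{E}_\mathbb{G} U)$ proved there is not merely a global statement: the estimate is derived by comparing the Fourier coefficients of $r(E_{\mathbb{G},i}u_{k_i+l})$ and $r(u_{\mathbb{G},k_i+j})$ at each fixed $\bm{G}$ via the matrix identity $\tilde V_{i,\bm{G}} = M_i V_{i,\bm{G}}$. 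Hence the same chain of inequalities remains valid after summing only over $\bm{G}\in\mathbb{G}^*$, giving
\begin{equation*}
    \tfrac{1}{2}\,\eta^2(U_\mathbb{G};\mathbb{G}^*)\;\le\;\eta^2(\mathscr{E}_\mathbb{G} U;\mathbb{G}^*)\;\le\;\tfrac{3}{2}\,\eta^2(U_\mathbb{G};\mathbb{G}^*),
\end{equation*}
provided $M_0\gg 1$, where the Ger\v{s}gorin bounds on the eigenvalues of $B_i=M_iM_i^*$ (Proposition \ref{elamdaiandj}) again supply the constants $1/2$ and $3/2$ uniformly in $\mathbb{G}^*$.

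With this subset-wise equivalence in hand, the main estimate follows by a short chain. Starting from the assumption $\eta(U_\mathbb{G};\mathbb{G}^*)\ge\theta\,\eta(U_\mathbb{G})$, I would first apply the lower bound of the subset-wise equivalence on $\mathbb{G}^*$, then the hypothesis, then the global equivalence $\eta^2(U_\mathbb{G})\ge\tfrac{2}{3}\eta^2(\mathscr{E}_\mathbb{G} U)$, obtaining
\begin{equation*}
    \eta^2(\mathscr{E}_\mathbb{G} U;\mathbb{G}^*)\;\ge\;\tfrac{1}{2}\eta^2(U_\mathbb{G};\mathbb{G}^*)\;\ge\;\tfrac{\theta^2}{2}\eta^2(U_\mathbb{G})\;\ge\;\tfrac{\theta^2}{3}\eta^2(\mathscr{E}_\mathbb{G} U),
\end{equation*}
so that one may take $\theta'=\theta/\sqrt{3}\in(0,1)$.

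The only step requiring attention is verifying that the matrix argument in the proof of Lemma \ref{estimator-U_Lambda-E_Lambda U} genuinely produces a pointwise (in $\bm{G}$) bound on the weighted coefficients $|\hat r_{\bm{G}}(\cdot)|^2/(1+|\bm{G}|^2)$, so that restricting the sum to $\mathbb{G}^*$ is legitimate and the Ger\v{s}gorin constants $\tfrac{1}{2},\tfrac{3}{2}$ remain uniform in the choice of subset. This is essentially a bookkeeping matter rather than a genuine obstacle, since the constants $\|M_i\|_2,\|M_i^{-1}\|_2$ depend only on the spectral projections and not on $\mathbb{G}^*$. Once this is observed, the lemma is an immediate consequence of the two equivalences and the assumed Dörfler property, and no additional perturbation argument is required.
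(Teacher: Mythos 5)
Your proposal is correct and is essentially the paper's own argument: the paper likewise invokes the fact that the matrix comparison in Lemma \ref{estimator-U_Lambda-E_Lambda U} holds at each fixed $\bm{G}$, yielding $\tfrac12\eta^2(U_\mathbb{G};\mathbb{G}^*)\le\eta^2(\mathscr{E}_\mathbb{G} U;\mathbb{G}^*)\le\tfrac32\eta^2(U_\mathbb{G};\mathbb{G}^*)$, and then runs the identical three-step chain to obtain $\theta'=\theta/\sqrt{3}$. Your explicit justification that the constants are uniform in $\mathbb{G}^*$ because the Ger\v{s}gorin bounds depend only on $M_i$ is a helpful elaboration of a step the paper leaves implicit.
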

\begin{proof}
	The arguments in the proof of Lemma \ref{estimator-U_Lambda-E_Lambda U} yield
	\begin{equation*}
		\frac{1}{2}\sum_{l=1}^{N}\|\Pi_{\mathbb{G}^*}r(u_{\mathbb{G},l})\|_{H_p^{-1}}^2\leq \sum_{i=1}^m\sum_{l=1}^{q_i}\|\Pi_{\mathbb{G}^*}r(E_{\mathbb{G},i} u_{k_i+l})\|_{H_p^{-1}}^2\leq \frac{3}{2}
		\sum_{l=1}^N\|\Pi_{\mathbb{G}^*}r(u_{\mathbb{G},l})\|_{H_p^{-1}}^2
	\end{equation*}
	when $M_0\gg 1$. Namely,
	\begin{equation*}
		\frac{1}{2}\eta^2(U_\mathbb{G};\mathbb{G}^*)\leq \eta^2(\mathscr{E}_\mathbb{G} U;\mathbb{G}^*)\leq \frac{3}{2}
		\eta^2(U_\mathbb{G};\mathbb{G}^*).
	\end{equation*}
	Therefore,
	\begin{equation*}
		\eta^2(\mathscr{E}_\mathbb{G} U;\mathbb{G}^*)\geq \frac{1}{2}
		\eta^2(U_\mathbb{G};\mathbb{G}^*)\geq \frac{1}{2}\theta^2
		\eta^2(U_\mathbb{G})\geq \frac{1}{3}\theta^2 \eta^2(\mathscr{E}_\mathbb{G} U).
	\end{equation*}
	Let $\theta'=\sqrt{\frac{1}{3}}\theta\in (0,1)$, we get
	\begin{equation*}
		\eta(\mathscr{E}_\mathbb{G} U;\mathbb{G}^*)\geq\theta'\eta(\mathscr{E}_\mathbb{G} U).
	\end{equation*}
\end{proof}

Similarly, we have
\begin{lemma}\label{dolferelamgdauandu}
	Let $\mathbb{G}\supset\mathbb{G}^{M_0}$ and $\theta\in(0,1)$ be a given constant. Let $\{u_{k_i+l}\}_{l=1}^{q_i}$ be an orthonormal basis of $M(\lambda_{(i)})~(i=1,2,\ldots,m)$. If
	\begin{equation*}
		\eta(\mathscr{E}_\mathbb{G} U;\mathbb{G}^*)\geq\theta\eta(\mathscr{E}_\mathbb{G} U),
	\end{equation*}
	where $U=(u_1,\ldots,u_N)$, then there holds
	\begin{equation*}
		\eta(U_\mathbb{G};\mathbb{G}^*)\geq\theta'\eta(U_\mathbb{G})
	\end{equation*}
	provided $M_0\gg 1$, where $\theta'=\sqrt{\dfrac{1}{3}}\theta\in (0,1)$.
\end{lemma}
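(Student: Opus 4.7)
The plan is to mirror the proof of Lemma \ref{dolferre} almost verbatim, exploiting the fact that the key estimate established there,
\begin{equation*}
	\frac{1}{2}\eta^2(U_\mathbb{G};\mathbb{G}^*)\le \eta^2(\mathscr{E}_\mathbb{G} U;\mathbb{G}^*)\le \frac{3}{2}\eta^2(U_\mathbb{G};\mathbb{G}^*),
\end{equation*}
is a \emph{two-sided} equivalence that holds with the same constants for any finite index set $\mathbb{G}^*\subset\mathbb{Z}^d$, including $\mathbb{G}^*=\mathbb{Z}^d$. Consequently the only change relative to Lemma \ref{dolferre} is the direction in which we chain the bounds: we start from the assumed D\"orfler-type condition on $\mathscr{E}_\mathbb{G} U$ and end at a D\"orfler-type condition on $U_\mathbb{G}$.

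First I would invoke the Gershgorin-disc argument from the proof of Lemma \ref{estimator-U_Lambda-E_Lambda U}, applied with $\Pi_{\mathbb{G}^*}$ truncating the Fourier expansions of the residuals, to get the displayed two-sided bound above for $M_0\gg 1$. (This step is already carried out in the proof of Lemma \ref{dolferre}, so in the writeup it suffices to cite that argument.) Then the main computation is the chain
\begin{equation*}
	\eta^2(U_\mathbb{G};\mathbb{G}^*)\ge \tfrac{2}{3}\,\eta^2(\mathscr{E}_\mathbb{G} U;\mathbb{G}^*)\ge \tfrac{2}{3}\theta^2\,\eta^2(\mathscr{E}_\mathbb{G} U)\ge \tfrac{2}{3}\theta^2\cdot\tfrac{1}{2}\,\eta^2(U_\mathbb{G})=\tfrac{1}{3}\theta^2\,\eta^2(U_\mathbb{G}),
\end{equation*}
where the first inequality uses the lower bound of the equivalence with $\mathbb{G}^*$, the second applies the hypothesis $\eta(\mathscr{E}_\mathbb{G} U;\mathbb{G}^*)\ge \theta\,\eta(\mathscr{E}_\mathbb{G} U)$, and the third applies the lower bound of the equivalence with $\mathbb{G}^*=\mathbb{Z}^d$ (i.e.\ $\eta^2(\mathscr{E}_\mathbb{G} U)\ge \tfrac{1}{2}\eta^2(U_\mathbb{G})$, equivalently $\eta^2(U_\mathbb{G})\le 2\eta^2(\mathscr{E}_\mathbb{G} U)$, which is the contrapositive form of the upper bound). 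Taking square roots gives $\eta(U_\mathbb{G};\mathbb{G}^*)\ge \theta'\eta(U_\mathbb{G})$ with $\theta'=\theta/\sqrt{3}\in(0,1)$.

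There is essentially no obstacle beyond bookkeeping: since the equivalence constants $1/2$ and $3/2$ were derived using only Propositions \ref{elambdaul2} and \ref{elamdaiandj} together with the Gershgorin bound (all of which are symmetric between $U_\mathbb{G}$ and $\mathscr{E}_\mathbb{G} U$), the same threshold $M_0\gg 1$ that worked for Lemma \ref{dolferre} works here. The mild subtlety to flag is that the hypothesis of the present lemma requires $\{u_{k_i+l}\}$ to be an orthonormal basis of $M(\lambda_{(i)})$ (so that $\mathscr{E}_\mathbb{G} U$ is well defined and the constants in Proposition \ref{elamdaiandj} apply), which matches the assumption in the statement. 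No new estimates are needed.
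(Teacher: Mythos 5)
Your proof is correct and follows exactly the route the paper intends: the paper dispatches this lemma with ``Similarly'' because, as you observe, the two-sided estimator equivalence from Lemma~\ref{estimator-U_Lambda-E_Lambda U} (and its $\mathbb{G}^*$-truncated version from the proof of Lemma~\ref{dolferre}) is symmetric, so chaining the bounds in the reverse direction immediately gives $\theta'=\theta/\sqrt{3}$. (One very small wording slip: your first inequality $\eta^2(U_\mathbb{G};\mathbb{G}^*)\ge\tfrac{2}{3}\eta^2(\mathscr{E}_\mathbb{G} U;\mathbb{G}^*)$ is a rearrangement of the \emph{upper} bound of the equivalence, and the final rearrangement $\eta^2(U_\mathbb{G})\le 2\eta^2(\mathscr{E}_\mathbb{G} U)$ is just the lower bound rewritten, not a ``contrapositive of the upper bound''---the mathematics is fine either way.)
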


\section{Convergence and complexity}\label{sec:conv-complex}
In this section, we analyze the asymptotic convergence and quasi-optimal complexity of the adaptive planewave method. The conclusions are valid for both Algorithms \ref{algorithmeigen} and \ref{algo:FAPWM}, although Algorithm \ref{algo:FAPWM} is stated in our analysis only.

\subsection{Convergence}
We shall first establish some relationships between two level planewave approximations.
\begin{lemma}
	Let $\mathbb{G}^{M_0}\subset\mathbb{G}_n\subset\mathbb{G}_{n+1}$ be subsets of $\mathbb{Z}^d$, $\{u_{k_i+l}\}_{l=1}^{q_i}$ be any orthonormal basis of $M(\lambda_{(i)})$ with $k_i=\sum_{j=1}^{i-1} q_i~(i=1,2,\ldots,m)$, $U=(u_1,u_2,\ldots,u_{k_m+q_m})$, $w^{\mathbb{G}_n,i}_j=\sum_{l=1}^{q_i}\alpha^{\mathbb{G}_n}_{i,l}(u_{k_i+j}) \lambda_{\mathbb{G}_n,k_i+l}Ku_{\mathbb{G}_n,k_i+l}(j=1,\ldots,q_i)$, $W^{\mathbb{G}_n,i}=(w^{\mathbb{G}_n,i}_1,\ldots,w^{\mathbb{G}_n,i}_{q_i})$, and $W^{\mathbb{G}_n}=(W^{\mathbb{G}_n,1},\ldots,W^{\mathbb{G}_n,m})$. Then,
	\begin{equation}\label{twolevelrela}
		\|U-\mathscr{E}_{\mathbb{G}_{n+1}} U\|_a=\|W^{\mathbb{G}_n}-P_{\mathbb{G}_{n+1}}W^{\mathbb{G}_n}\|_a+\mathcal {O}(\gamma_{\mathbb{G}_n})(\|U-\mathscr{E}_{\mathbb{G}_{n+1}} U\|_a+\|U-\mathscr{E}_{\mathbb{G}_{n}} U\|_a)
	\end{equation}
	provided $M_0\gg 1$.
\end{lemma}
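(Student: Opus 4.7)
The plan is to mirror the strategy used in the proof of Theorem \ref{thm:u-Eu-appro-w-Pw}, but in a two-level setting where the fine space is $V_{\mathbb{G}_{n+1}}$ and the coarse data $w^{\mathbb{G}_n,i}_j$ comes from the approximate eigenpairs on $\mathbb{G}_n$. Throughout I work component by component on a single basis vector $u = u_{k_i+j}$, and then aggregate the bounds by summing the squares over $j=1,\ldots,q_i$ and $i=1,\ldots,m$ to recover the norms on $U$, $\mathscr{E}_{\mathbb{G}_n}U$, $\mathscr{E}_{\mathbb{G}_{n+1}}U$ and $W^{\mathbb{G}_n}$.

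First, I will insert the identity \eqref{eq:Elambda_u} at the level $\mathbb{G}_{n+1}$, namely $E_{\mathbb{G}_{n+1},i}u = P_{\mathbb{G}_{n+1}} w^{\mathbb{G}_{n+1},i}_j$, and split
\[
u - E_{\mathbb{G}_{n+1},i}u \;=\; (u - w^{\mathbb{G}_n,i}_j) \;+\; (w^{\mathbb{G}_n,i}_j - P_{\mathbb{G}_{n+1}} w^{\mathbb{G}_n,i}_j) \;+\; P_{\mathbb{G}_{n+1}}\bigl(w^{\mathbb{G}_n,i}_j - w^{\mathbb{G}_{n+1},i}_j\bigr).
\]
The middle term is exactly the contribution that accumulates into $\|W^{\mathbb{G}_n}-P_{\mathbb{G}_{n+1}}W^{\mathbb{G}_n}\|_a$ after summation. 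I therefore need to show that the first and third terms, measured in the $a$-norm, can be absorbed into $\mathcal{O}(\gamma_{\mathbb{G}_n})(\|u-E_{\mathbb{G}_{n+1},i}u\|_a+\|u-E_{\mathbb{G}_n,i}u\|_a)$, and then conclude with a triangle-inequality manipulation.

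For the first term I apply \eqref{uminusw} directly: $\|u-w^{\mathbb{G}_n,i}_j\|_a \le \tilde C\,\gamma_{\mathbb{G}_n}\|u-E_{\mathbb{G}_n,i}u\|_a$. For the third term, I use that $P_{\mathbb{G}_{n+1}}$ is $a$-orthogonal and hence $a$-contractive, reducing matters to
\[
\|w^{\mathbb{G}_n,i}_j - w^{\mathbb{G}_{n+1},i}_j\|_a \;\le\; \|u - w^{\mathbb{G}_n,i}_j\|_a + \|u - w^{\mathbb{G}_{n+1},i}_j\|_a,
\]
and each summand is bounded by \eqref{uminusw} on the appropriate level; since $\mathbb{G}_n \subset \mathbb{G}_{n+1}$ gives $\gamma_{\mathbb{G}_{n+1}} \le \gamma_{\mathbb{G}_n}$, both contributions are of order $\gamma_{\mathbb{G}_n}(\|u-E_{\mathbb{G}_n,i}u\|_a+\|u-E_{\mathbb{G}_{n+1},i}u\|_a)$. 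Combining with the triangle/reverse-triangle inequality on the splitting above yields the scalar version of \eqref{twolevelrela}, and squaring and summing over $j$ and $i$ produces the stated identity.

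The main obstacle, I expect, is the third term $P_{\mathbb{G}_{n+1}}(w^{\mathbb{G}_n,i}_j - w^{\mathbb{G}_{n+1},i}_j)$: it is the genuinely new feature absent from the one-level Theorem \ref{thm:u-Eu-appro-w-Pw}. Care is needed because the coefficients $\alpha^{\mathbb{G}_n}_{i,l}$ and $\alpha^{\mathbb{G}_{n+1}}_{i,l}$ together with the eigenvalues $\lambda_{\mathbb{G}_n,k_i+l}$ and $\lambda_{\mathbb{G}_{n+1},k_i+l}$ are different, so one cannot compare $w^{\mathbb{G}_n,i}_j$ and $w^{\mathbb{G}_{n+1},i}_j$ directly. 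The trick is to route the comparison through $u$ itself via \eqref{uminusw} applied twice, exploiting the monotonicity $\gamma_{\mathbb{G}_{n+1}}\le\gamma_{\mathbb{G}_n}$ and the bijectivity of $\mathscr{E}_{\mathbb{G}}$ between the exact and discrete invariant subspaces for $\mathbb{G}\supset\mathbb{G}^{M_0}$ with $M_0\gg 1$. Finally, the passage from scalar quantities to the full vectorized identity requires that the $\mathcal{O}(\gamma_{\mathbb{G}_n})$ constants be uniform in $i$ and $j$, which follows because $m$ and the $q_i$ are fixed.
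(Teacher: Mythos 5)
Your proposal is correct and follows essentially the same route as the paper: you use the same decomposition of $u - E_{\mathbb{G}_{n+1},i}u$ via \eqref{eq:Elambda_u}, bound the extraneous pieces by routing through $u$ with \eqref{uminusw} at both levels, exploit the contractivity of $P_{\mathbb{G}_{n+1}}$ and the monotonicity $\gamma_{\mathbb{G}_{n+1}}\le\gamma_{\mathbb{G}_n}$, and aggregate the scalar estimates into the vector norm. The only cosmetic difference is that the paper bounds the first and third terms together in one step whereas you treat them separately before recombining.
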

\begin{proof}
	To get \eqref{twolevelrela}, it is sufficient to prove that, for any $u\in M(\lambda_{(i)})$ with $\|u\|_{L^2}=1$,
	\[
	w^{\mathbb{G}_n,i}=\sum_{l=1}^{q_i} \alpha^{\mathbb{G}_n}_{i,l}(u) \lambda_{\mathbb{G}_n,k_i+l}Ku_{\mathbb{G}_n,k_i+l}
	\]
	leads to
	\begin{equation*}
		\|u-E_{\mathbb{G}_{n+1},i} u\|_a=\|w^{\mathbb{G}_n,i}-P_{\mathbb{G}_{n+1}}w^{\mathbb{G}_n,i}\|_a+\mathcal {O}(\gamma_{\mathbb{G}_n})(\|u-E_{\mathbb{G}_{n+1},i} u\|_a+\|u-E_{\mathbb{G}_{n},i} u\|_a).
	\end{equation*}
	Let
	\[
	w^{\mathbb{G}_{n+1},i}=\sum_{l=1}^{q_i} \alpha^{\mathbb{G}_{n+1}}_{i,l}(u)\lambda_{\mathbb{G}_{n+1},k_i+l}Ku_{\mathbb{G}_{n+1},k_i+l}.
	\]
	It is clear that
	\begin{equation*}
		\begin{split}
			\|P_{\mathbb{G}_{n+1}}(w^{\mathbb{G}_{n},i}-w^{\mathbb{G}_{n+1},i})+u-w^{\mathbb{G}_{n},i}\|_a&\lesssim\|w^{\mathbb{G}_{n},i}-w^{\mathbb{G}_{n+1},i}\|_a+\|u-w^{\mathbb{G}_{n},i}\|_a\\
			&\lesssim\|u-w^{\mathbb{G}_{n+1},i}\|_a+\|u-w^{\mathbb{G}_{n},i}\|_a,
		\end{split}
	\end{equation*}
	which together with \eqref{uminusw} yields
	\begin{equation*}
		\begin{split}
			\|P_{\mathbb{G}_{n+1}}(w^{\mathbb{G}_{n},i}-w^{\mathbb{G}_{n+1},i})+u-w^{\mathbb{G}_{n},i}\|_a&\lesssim \gamma_{\mathbb{G}_{n+1}}\|u-E_{\mathbb{G}_{n+1},i} u\|_a+\gamma_{\mathbb{G}_{n}}\|u-E_{\mathbb{G}_{n},i} u\|_a\\
			&\lesssim \gamma_{\mathbb{G}_{n}}(\|u-E_{\mathbb{G}_{n+1},i} u\|_a+\|u-E_{\mathbb{G}_{n},i} u\|_a).
		\end{split}
	\end{equation*}
	Since \eqref{eq:Elambda_u} implies
	\begin{equation*}
		u-E_{\mathbb{G}_{n+1},i} u = u-w^{\mathbb{G}_{n},i}+P_{\mathbb{G}_{n+1}}(w^{\mathbb{G}_{n},i}-w^{\mathbb{G}_{n+1},i})+w^{\mathbb{G}_{n},i}-P_{\mathbb{G}_{n+1}}w^{\mathbb{G}_{n},i},
	\end{equation*}
	we get
	\begin{equation*}
		\|u-E_{\mathbb{G}_{n+1},i} u\|_a=\|w^{\mathbb{G}_n,i}-P_{\mathbb{G}_{n+1}}w^{\mathbb{G}_n,i}\|_a+\mathcal {O}(\gamma_{\mathbb{G}_n})(\|u-E_{\mathbb{G}_{n+1},i} u\|_a+\|u-E_{\mathbb{G}_{n},i} u\|_a).
	\end{equation*}
\end{proof}

Then we derive the error reduction.
\begin{theorem}\label{bvpeig}
	Let $\lambda_{k_0+1}\le\lambda_{k_0+2}\le\cdots\le\lambda_{k_0+N}$ be a cluster of eigenvalues of \eqref{weakform}, which are $m$ eigenvalues $\lambda_{(1)}<\lambda_{(2)}<\cdots<\lambda_{(m)}$ with the corresponding eigenspace $M(\lambda_{(i)})$ if not accounting the multiplicity. Assume that the multiplicity of each eigenvalue is $q_i$ and satisfies $N=\sum_{i=1}^m q_i$. Let $\{u_{k_i+l}\}_{l=1}^{q_i}$ be any orthonormal basis of $M(\lambda_{(i)})$ with $k_i=\sum_{j=1}^{i-1} q_i$ and $\{(\lambda_{\mathbb{G}_{n},l},u_{\mathbb{G}_{n},l})\in\mathbb{R}\times V_{\mathbb{G}_{n}}:l=1,\ldots,N\}_{n\in \mathbb{N}}$ be a sequence of planewave approximations produced by Algorithm \ref{algo:FAPWM}. Let $U=(u_1,\ldots,u_N)$. Then there exists a constant $\alpha\in(0,1)$, depending only on $\alpha^*$, $\alpha_*$, and parameters $\tilde{\theta},\zeta$ used in Algorithm \ref{algo:FAPWM}, such that, for any two consecutive iterations $n$ and $n+1$,
	\begin{equation}\label{ineq:error-reduction}
		\|U-\mathscr{E}_{\mathbb{G}_{n+1}} U\|_a\leq \alpha \|U-\mathscr{E}_{\mathbb{G}_{n}} U\|_a
	\end{equation}provided $M_0\gg 1$.
\end{theorem}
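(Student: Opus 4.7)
The plan is to reduce the eigenvalue error reduction to the source-problem contraction \eqref{convidealrho}, using the two-level identity \eqref{twolevelrela} as the bridge. By \eqref{eq:Elambda_u} we have $P_{\mathbb{G}_n}w^{\mathbb{G}_n,i}=E_{\mathbb{G}_n,i}u$, so $\mathscr{E}_{\mathbb{G}_n}U=P_{\mathbb{G}_n}W^{\mathbb{G}_n}$; consequently the quantity $\|W^{\mathbb{G}_n}-P_{\mathbb{G}_{n+1}}W^{\mathbb{G}_n}\|_a$ on the right of \eqref{twolevelrela} is exactly the Galerkin error on $V_{\mathbb{G}_{n+1}}$ for the auxiliary source problem $L W^{\mathbb{G}_n,i}=\sum_{l=1}^{q_i}\alpha^{\mathbb{G}_n}_{i,l}(u)\lambda_{\mathbb{G}_n,k_i+l}u_{\mathbb{G}_n,k_i+l}$, evaluated at the previous discrete solution $\mathscr{E}_{\mathbb{G}_n}U$. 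Comparing the definitions of the eigenvalue residual $r(E_{\mathbb{G}_n,i}u)$ and of the source residual $\bar r$ for this auxiliary problem shows that they coincide, hence $\eta(\mathscr{E}_{\mathbb{G}_n}U;\cdot)=\bar\eta(\mathscr{E}_{\mathbb{G}_n}U;\cdot)$.

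Next I transfer the Dörfler property from the computable marker $\tilde\eta(U_{\mathbb{G}_n};\cdot)$ used by Algorithm \ref{algo:FAPWM} to $\eta(\mathscr{E}_{\mathbb{G}_n}U;\cdot)$. The algorithm returns $\delta\mathbb{G}_n$ with $\tilde\eta(U_{\mathbb{G}_n};\delta\mathbb{G}_n)\ge\tilde\theta\,\tilde\eta(U_{\mathbb{G}_n})$; Lemma \ref{lem:estimatorEqui} then yields $\eta(U_{\mathbb{G}_n};\delta\mathbb{G}_n)\ge\theta\,\eta(U_{\mathbb{G}_n})$ with $\theta=(\tilde\theta-\zeta)/(1+\zeta)\in(0,1)$, and Lemma \ref{dolferre} upgrades this to
\begin{equation*}
\eta(\mathscr{E}_{\mathbb{G}_n}U;\delta\mathbb{G}_n)\ge\theta'\,\eta(\mathscr{E}_{\mathbb{G}_n}U),\qquad \theta'=\theta/\sqrt{3}\in(0,1),
\end{equation*}
provided $M_0\gg 1$. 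By the identification above, this is precisely the Dörfler condition for the source residual of the auxiliary problem at $\mathscr{E}_{\mathbb{G}_n}U$. The one-step contraction behind \eqref{convidealrho} therefore gives
\begin{equation*}
\|W^{\mathbb{G}_n}-P_{\mathbb{G}_{n+1}}W^{\mathbb{G}_n}\|_a\le\rho\,\|W^{\mathbb{G}_n}-P_{\mathbb{G}_n}W^{\mathbb{G}_n}\|_a,\qquad \rho=\sqrt{1-(\alpha_*/\alpha^*)\theta'^2}\in(0,1).
\end{equation*}

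To close the loop I plug this estimate into \eqref{twolevelrela} and invoke \eqref{eigenbvp} to replace $\|W^{\mathbb{G}_n}-P_{\mathbb{G}_n}W^{\mathbb{G}_n}\|_a$ by $(1+\mathcal{O}(\gamma_{\mathbb{G}_n}))\|U-\mathscr{E}_{\mathbb{G}_n}U\|_a$. This produces
\begin{equation*}
\|U-\mathscr{E}_{\mathbb{G}_{n+1}}U\|_a\le\rho(1+C\gamma_{\mathbb{G}_n})\|U-\mathscr{E}_{\mathbb{G}_n}U\|_a+C'\gamma_{\mathbb{G}_n}\bigl(\|U-\mathscr{E}_{\mathbb{G}_{n+1}}U\|_a+\|U-\mathscr{E}_{\mathbb{G}_n}U\|_a\bigr).
\end{equation*}
Absorbing $C'\gamma_{\mathbb{G}_n}\|U-\mathscr{E}_{\mathbb{G}_{n+1}}U\|_a$ into the left-hand side and using $\gamma_{\mathbb{G}_n}\le\gamma_{\mathbb{G}_0}$ gives the claim with
\begin{equation*}
\alpha=\frac{\rho(1+C\gamma_{\mathbb{G}_0})+C'\gamma_{\mathbb{G}_0}}{1-C'\gamma_{\mathbb{G}_0}}.
\end{equation*}

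The main obstacle is ensuring $\alpha<1$ uniformly in $n$. Since $\rho$ depends only on $\tilde\theta,\zeta,\alpha_*,\alpha^*$ and is strictly below $1$, while the constants $C,C'$ absorbed from Theorem \ref{thm:u-Eu-appro-w-Pw} and \eqref{twolevelrela} are independent of $\mathbb{G}_n$, the argument reduces to picking $M_0$ so large that $\gamma_{\mathbb{G}_0}$ sits strictly inside the gap $1-\rho$. This is legitimate because $\gamma_{\mathbb{G}_n}$ is monotone non-increasing in $n$ (by monotonicity of $\rho_\Omega(\cdot)$ and $\delta_{(\cdot)}(\lambda_{(i)})$ under enlargement of $\mathbb{G}$) and $\gamma_{\mathbb{G}^{M_0}}\to 0$ as $M_0\to\infty$ by the Remark following Theorem \ref{thm:u-Eu-appro-w-Pw}, so a single threshold on $M_0$ suffices for all iterations.
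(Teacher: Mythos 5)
Your proof is correct and follows essentially the same route as the paper: transfer the Dörfler property from $\tilde\eta(U_{\mathbb{G}_n};\cdot)$ to $\eta(\mathscr{E}_{\mathbb{G}_n}U;\cdot)$ via Lemmas \ref{lem:estimatorEqui} and \ref{dolferre}, identify $\mathscr{E}_{\mathbb{G}_n}U=P_{\mathbb{G}_n}W^{\mathbb{G}_n}$ to invoke the source-problem contraction \eqref{convidealrho}, and close with the two-level identity \eqref{twolevelrela} and the initial-index-set smallness of $\gamma_{\mathbb{G}_0}$. The only cosmetic difference is at the closing step, where the paper squares and applies Young's inequality while you absorb terms linearly; both yield a valid contraction factor $\alpha<1$ for $M_0\gg 1$.
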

\begin{proof}
	It follows from Lemmas \ref{lem:estimatorEqui} and \ref{dolferre} that there exists a constant $\theta'\in (0,1)$ such that
	\begin{equation*}
		\eta(\mathscr{E}_\mathbb{G} U;\mathbb{G}^*)\geq\theta'\eta(\mathscr{E}_\mathbb{G} U)
	\end{equation*}
	provided $M_0\gg 1$. Recall that $w^{\mathbb{G}_n,i}_j=\sum_{l=1}^{q_i} \alpha^{\mathbb{G}_n}_{i,l}(u_{k_i+j}) \lambda_{\mathbb{G}_n,k_i+l}Ku_{\mathbb{G}_n,k_i+l}(j=1,\ldots,q_i)$, $W^{\mathbb{G}_n,i}=(w^{\mathbb{G}_n,i}_1,\ldots,w^{\mathbb{G}_n,i}_{q_i})$, and $W^{\mathbb{G}_n}=(W^{\mathbb{G}_n,1},\ldots,W^{\mathbb{G}_n,m})$. Thus D\"orfler marking strategy is satisfied with $\theta=\theta'$ for $W^{\mathbb{G}_n}$. We conclude from \eqref{convidealrho} and $P_{\mathbb{G}_n}W^{\mathbb{G}_n}=\mathscr{E}_{\mathbb{G}_n} U$ that there exists a constant $\rho\in(0,1)$ such that
	\begin{equation}\label{eq:sourceProblem-deduction}
		\|W^{\mathbb{G}_n}-P_{\mathbb{G}_{n+1}}W^{\mathbb{G}_n}\|_a\leq \rho\|W^{\mathbb{G}_n}-\mathscr{E}_{\mathbb{G}_n} U\|_a.
	\end{equation}
	
	Since $\gamma_{\mathbb{G}_n}$ decreases as $n$ increases, we obtain from \eqref{twolevelrela} and Young's inequality that
	\begin{align*}
		\|U-\mathscr{E}_{\mathbb{G}_{n+1}} U\|_a^2 &\leq (1+\delta)\|W^{\mathbb{G}_n}-P_{\mathbb{G}_{n+1}}W^{\mathbb{G}_n}\|^2_a\\
		&\quad+\hat{C}(1+\delta^{-1})\gamma_{\mathbb{G}_{0}}^2(\|U-\mathscr{E}_{\mathbb{G}_{n}} U\|_a^2+\|U-\mathscr{E}_{\mathbb{G}_{n+1}} U\|_a^2),
	\end{align*}
	where $\delta\in(0,1)$ is chosen to satisfy
	\begin{equation*}
		(1+\delta)\rho^2<1.
	\end{equation*}
	
	It follows from \eqref{uminusw} and \eqref{eq:sourceProblem-deduction} that
	\begin{equation*}
		\begin{split}
			&\mathrel{\phantom{=}}
			\|W^{\mathbb{G}_n}-P_{\mathbb{G}_{n+1}}W^{\mathbb{G}_n}\|^2_a\\
			&\leq\rho^2 \|W^{\mathbb{G}_n}-\mathscr{E}_{\mathbb{G}_n} U\|^2_a\\
			&\leq\rho^2(\|W^{\mathbb{G}_n}-U\|_a+\|U-\mathscr{E}_{\mathbb{G}_n} U\|_a)^2\\
			&\leq\rho^2(1+\tilde{C}\gamma_{\mathbb{G}_{0}})^2\|U-\mathscr{E}_{\mathbb{G}_n} U\|_a^2,
		\end{split}
	\end{equation*}
	which leads to
	\begin{align*}
		\|U-\mathscr{E}_{\mathbb{G}_{n+1}} U\|_a^2&\leq(1+\delta)\rho^2\|U-\mathscr{E}_{\mathbb{G}_n} U\|_a^2+C_4\delta^{-1}\gamma_{\mathbb{G}_{0}}(1+\gamma_{\mathbb{G}_{0}})\|U-\mathscr{E}_{\mathbb{G}_n} U\|_a^2\\
		&\quad+C_4\delta^{-1}\gamma^2_{\mathbb{G}_{0}}\|U-\mathscr{E}_{\mathbb{G}_{n+1}} U\|_a^2,
	\end{align*}
	where $C_4$ is some constant depending on $\tilde{C}$ and $\hat{C}$. Consequently,
	\begin{equation*}
		(1-C_4\delta^{-1}\gamma^2_{\mathbb{G}_{0}})\|U-\mathscr{E}_{\mathbb{G}_{n+1}} U\|_a^2
		\leq((1+\delta)\rho^2+C_4\delta^{-1}\gamma_{\mathbb{G}_{0}}(1+\gamma_{\mathbb{G}_{0}}))\|U-\mathscr{E}_{\mathbb{G}_{n}} U\|_a^2.
	\end{equation*}
	Let
	\begin{equation*}
		\alpha=\left(\frac{(1+\delta)\rho^2+C_4\delta^{-1}\gamma_{\mathbb{G}_{0}}}{1-C_4\delta^{-1}\gamma^2_{\mathbb{G}_0}}\right)^{\frac{1}{2}}
	\end{equation*}
	Since $M_0\gg 1$ implies $\gamma_{\mathbb{G}_0}\ll 1$, we have that $\alpha\in(0,1)$ and
	\begin{equation*}
		\|U-\mathscr{E}_{\mathbb{G}_{n+1}} U\|_a\leq \alpha \|U-\mathscr{E}_{\mathbb{G}_{n}} U\|_a
	\end{equation*}
	when $M_0\gg 1$.
\end{proof}

\begin{theorem}\label{decreasespace}
	Let $\lambda_{k_0+1}\le\lambda_{k_0+2}\le\cdots\le\lambda_{k_0+N}$ be a cluster of eigenvalues of \eqref{weakform}, which are $m$ eigenvalues $\lambda_{(1)}<\lambda_{(2)}<\cdots<\lambda_{(m)}$ with the corresponding eigenspace $M(\lambda_{(i)})$ if not accounting the multiplicity. Assume that the multiplicity of each eigenvalue is $q_i$ and satisfies $N=\sum_{i=1}^m q_i$. Let $\{(\lambda_{\mathbb{G}_{n},l},u_{\mathbb{G}_{n},l})\in\mathbb{R}\times V_{\mathbb{G}_{n}}:l=1,\ldots,N\}_{n\in \mathbb{N}}$ be a sequence of planewave approximations produced by Algorithm \ref{algo:FAPWM}. Set $M_{\mathbb{G}_n}(\lambda_{(i)})= \operatorname{span} \{u_{\mathbb{G}_{n},k_i+1},\ldots,u_{\mathbb{G}_{n},k_i+q_i}\}$ with $k_i=\sum_{j=1}^{i-1} q_j$. Then, there exists a constant $\alpha\in(0,1)$, depending only on $\alpha^*$, $\alpha_*$, and parameters $\tilde{\theta},\zeta$ used in Algorithm \ref{algo:FAPWM}, such that
	\begin{equation*}
		\delta_{H_p^1(\Omega)}(\mathscr{M},\mathscr{M}_{\mathbb{G}_n})\lesssim \alpha^n
	\end{equation*}
	provided $M_0\gg 1$.
\end{theorem}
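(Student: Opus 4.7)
The plan is to combine the per-step contraction from Theorem \ref{bvpeig} with the equivalence between the subspace distance and the approximation error that was established in the a posteriori estimate section. First, fix orthonormal bases $\{u_{k_i+l}\}_{l=1}^{q_i}$ of each $M(\lambda_{(i)})$ and set $U=(u_1,\ldots,u_N)$, so that Theorem \ref{bvpeig} applies. Iterating the contraction \eqref{ineq:error-reduction} $n$ times yields
\begin{equation*}
  \|U-\mathscr{E}_{\mathbb{G}_n}U\|_a\;\le\;\alpha^n\,\|U-\mathscr{E}_{\mathbb{G}_0}U\|_a,
\end{equation*}
and since $U$ is fixed and $\mathscr{E}_{\mathbb{G}_0}$ is a bounded projection onto $\mathscr{M}_{\mathbb{G}_0}$ once $M_0\gg 1$, the quantity $\|U-\mathscr{E}_{\mathbb{G}_0}U\|_a$ is a finite constant independent of $n$.

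Next, I would convert this approximation error into a subspace distance $\delta_{H_p^1(\Omega)}(\mathscr{M},\mathscr{M}_{\mathbb{G}_n})$. Using exactly the same argument as in the proof of the a posteriori equivalence $\delta_{H_p^1(\Omega)}(\mathscr{M},\mathscr{M}_{\mathbb{G}})\cong \eta(U_{\mathbb{G}})$, one direction is immediate: for each block $i$,
\begin{equation*}
  d_{H_p^1(\Omega)}(M(\lambda_{(i)}),M_{\mathbb{G}_n}(\lambda_{(i)}))
  \;\le\;\sup_{u\in M(\lambda_{(i)}),\,\|u\|_a=1}\|u-E_{\mathbb{G}_n,i}u\|_a
  \;\lesssim\;\|U_i-E_{\mathbb{G}_n,i}U_i\|_a.
\end{equation*}
The reverse distance $d_{H_p^1(\Omega)}(M_{\mathbb{G}_n}(\lambda_{(i)}),M(\lambda_{(i)}))$ is then controlled by Lemma \ref{dxydyx} together with $\dim M(\lambda_{(i)})=\dim M_{\mathbb{G}_n}(\lambda_{(i)})$, since $d_{H_p^1(\Omega)}(M(\lambda_{(i)}),M_{\mathbb{G}_n}(\lambda_{(i)}))\to 0$ as $n\to\infty$. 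Squaring and summing over $i=1,\ldots,m$ gives
\begin{equation*}
  \delta_{H_p^1(\Omega)}(\mathscr{M},\mathscr{M}_{\mathbb{G}_n})\;\lesssim\;\|U-\mathscr{E}_{\mathbb{G}_n}U\|_a.
\end{equation*}

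Chaining the two bounds produces $\delta_{H_p^1(\Omega)}(\mathscr{M},\mathscr{M}_{\mathbb{G}_n})\lesssim \alpha^n$, which is exactly the claim. The proof is essentially a short corollary of Theorem \ref{bvpeig}; no new estimates are required. The only mildly delicate point is to make sure the orthonormal bases used to define $\mathscr{E}_{\mathbb{G}_n}U$ can be chosen once and kept fixed for all $n$, so that the geometric factor $\alpha$ and the initial constant $\|U-\mathscr{E}_{\mathbb{G}_0}U\|_a$ do not depend on $n$; this is legitimate because Theorem \ref{bvpeig} holds for \emph{any} orthonormal basis of each $M(\lambda_{(i)})$, and because the distance $\delta_{H_p^1(\Omega)}$ is basis-independent. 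Thus the main obstacle has already been overcome in the error-reduction theorem, and the present statement follows by iteration and a direct comparison of norms.
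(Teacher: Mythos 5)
Your proof is correct and follows essentially the same route as the paper's: iterate the contraction in Theorem \ref{bvpeig} to bound $\|U-\mathscr{E}_{\mathbb{G}_n}U\|_a\lesssim\alpha^n$ for a fixed orthonormal basis, then pass to the subspace distance via the basis-expansion/Cauchy--Schwarz estimate on $\sup_{u}\|u-E_{\mathbb{G}_n,i}u\|_a$ and Lemma \ref{dxydyx} for the reverse distance. The only cosmetic difference is the order of the two steps (the paper first dominates $d^2_{H_p^1}(\mathscr{M},\mathscr{M}_{\mathbb{G}_n})$ by $\|U-\mathscr{E}_{\mathbb{G}_n}U\|_a^2$ and then iterates), which does not change the substance.
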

\begin{proof}
	Let $\{u_{k_i+1},\ldots,u_{k_i+q_i}\}$ be an orthonormal basis of $M(\lambda_{(i)})$. For any $u\in M(\lambda_{(i)})$ with $\|u\|_{L^2}=1$, there exist $\{\alpha_{i,1},\ldots,\alpha_{i,q_i}\}$ such that $\sum_{l=1}^{q_i} |\alpha_{i,l}|^2 =1$ and $u=\sum_{l=1}^{q_i} \alpha_{i,l} u_{k_i+l}$.
	Therefore, we have
	\begin{equation*}
		\begin{aligned}
			\|u-E_{\mathbb{G}_n,i} u\|_a^2&=\left\|\sum_{l=1}^q \alpha_{i,l} (u_{k_i+l}-E_{\mathbb{G}_n,i} u_{k_i+l})\right\|_a^2\\
			&\leq\sum_{l=1}^{q_i} |\alpha_{i,l}|^2 \sum_{l=1}^{q_i}\|u_{k_i+l}-E_{\mathbb{G}_n,i} u_{k_i+l}\|_a^2
			=\sum_{l=1}^{q_i}\|u_{k_i+l}-E_{\mathbb{G}_n,i} u_{k_i+l}\|_a^2.\\
		\end{aligned}
	\end{equation*}
	We obtain from \eqref{ineq:error-reduction} and the definition of $d_{H_p^1(\Omega)}(M(\lambda),M_{\mathbb{G}_n}(\lambda))$ that
	\begin{align*}
		d_{H_p^1(\Omega)}(\mathscr{M},\mathscr{M}_{\mathbb{G}_n})&=\sum_{i=1}^m d^2_{H_p^1(\Omega)}(M(\lambda_{(i)}),M_{\mathbb{G}}(\lambda_{(i)}))\\
		&\le\sum_{i=1}^m \sum_{l=1}^{q_i}\|u_{k_i+l}-E_{\mathbb{G}_n,i} u_{k_i+l}\|_a^2=\|U-\mathscr{E}_{\mathbb{G}_n} U\|_a^2\\
		&\le\alpha^2\|U-\mathscr{E}_{\mathbb{G}_{n-1}}U\|_a^2\le\cdots\le\alpha^{2n} \|U-\mathscr{E}_{\mathbb{G}_0} U\|_a^2\lesssim\alpha^{2n},
	\end{align*}
	which together with Lemma \ref{dxydyx} implies
	\[
	d_{H_p^1(\Omega)}(\mathscr{M}_{\mathbb{G}_n},\mathscr{M})\lesssim \alpha^{2n}.
	\]
\end{proof}

\subsection{Complexity}
Following the complexity analysis in \citet{dai2008convergence,dai2015convergence}, we are able to analyze the complexity of the adaptive planewave approximations for eigenvalue problems by applying the complexity results for source problems. We first review some complexity results for the associated source system \eqref{weakformbvp}.

For the source system \eqref{weakformbvp}, we have the following local upper bound estimate and the D\"{o}rfler property, which are direct extensions of Lemmas 7.1 and 7.2 in \citet{canuto2014adaptive} from the case of $N=1$ to the case of any $N$.
\begin{proposition}
	Suppose $\mathbb{G}\subset\mathbb{G}^*\subset\mathbb{Z}^d$ be nonempty index sets. Let $U_{\mathbb{G}}$ and $U_{\mathbb{G}^*}$ be discrete solutions of \eqref{weakformbvp} in $(V_\mathbb{G})^N$ and $(V_{\mathbb{G}^*})^N$, respectively. Then the following local upper bound is valid
	\begin{equation*}
		\|U_{\mathbb{G}^*}-U_\mathbb{G}\|_a^2\leq
		\frac{1}{\alpha_*}\eta^2(U_\mathbb{G};\mathbb{G}^*).
	\end{equation*}
\end{proposition}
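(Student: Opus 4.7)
The plan is to exploit Galerkin orthogonality on the nested spaces $V_\mathbb{G}\subset V_{\mathbb{G}^*}$ together with the $H^{-1}$/$H^1$ duality used in the definition of the estimator. Write componentwise: for each $i$, set $e_i\coloneqq u_{i,\mathbb{G}^*}-u_{i,\mathbb{G}}\in V_{\mathbb{G}^*}$. Because both $U_{\mathbb{G}^*}$ and $U_\mathbb{G}$ solve \eqref{discreformbvp} on their respective spaces and $V_\mathbb{G}\subset V_{\mathbb{G}^*}$, one has for every $v\in V_{\mathbb{G}^*}$ the identity
\begin{equation*}
a(e_i,v)=(f_i,v)-a(u_{i,\mathbb{G}},v)=\langle \bar r(u_{i,\mathbb{G}}),v\rangle,
\end{equation*}
which is the discrete residual equation.

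Next I would test with $v=e_i\in V_{\mathbb{G}^*}$. Since $e_i$ lies in $V_{\mathbb{G}^*}$, only the Fourier modes of $\bar r(u_{i,\mathbb{G}})$ indexed by $\mathbb{G}^*$ contribute to the pairing, so $\langle \bar r(u_{i,\mathbb{G}}),e_i\rangle=\langle \Pi_{\mathbb{G}^*}\bar r(u_{i,\mathbb{G}}),e_i\rangle$. Applying the $H_p^{-1}$/$H_p^1$ duality followed by the coercivity bound $\|\cdot\|_{H_p^1}\le \alpha_*^{-1/2}\|\cdot\|_a$ yields
\begin{equation*}
\|e_i\|_a^2=\langle \Pi_{\mathbb{G}^*}\bar r(u_{i,\mathbb{G}}),e_i\rangle\le \bar\eta(u_{i,\mathbb{G}};\mathbb{G}^*)\,\|e_i\|_{H_p^1}\le \frac{1}{\sqrt{\alpha_*}}\,\bar\eta(u_{i,\mathbb{G}};\mathbb{G}^*)\,\|e_i\|_a.
\end{equation*}
Dividing by $\|e_i\|_a$ (trivial if $e_i=0$) and squaring gives the componentwise bound $\|e_i\|_a^2\le \alpha_*^{-1}\bar\eta^2(u_{i,\mathbb{G}};\mathbb{G}^*)$.

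Finally I would sum this estimate over $i=1,\dots,N$ to obtain
\begin{equation*}
\|U_{\mathbb{G}^*}-U_\mathbb{G}\|_a^2=\sum_{i=1}^N\|e_i\|_a^2\le \frac{1}{\alpha_*}\sum_{i=1}^N\bar\eta^2(u_{i,\mathbb{G}};\mathbb{G}^*)=\frac{1}{\alpha_*}\bar\eta^2(U_\mathbb{G};\mathbb{G}^*),
\end{equation*}
which is the claimed local upper bound (with $\bar\eta$ standing in for the estimator $\eta$ appearing in the statement, since we are in the source-problem setting).

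There is no real obstacle here; the only conceptually nontrivial point is the observation that testing with $e_i\in V_{\mathbb{G}^*}$ lets one replace $\bar r(u_{i,\mathbb{G}})$ by its $L^2$-projection $\Pi_{\mathbb{G}^*}\bar r(u_{i,\mathbb{G}})$ in the pairing, which is what localizes the bound to the index set $\mathbb{G}^*$. Everything else is Cauchy--Schwarz in dual norms together with the elementary equivalence between the energy and $H_p^1$ norms.
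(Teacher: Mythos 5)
Your proof is correct, and it is precisely the standard argument behind Lemma 7.1 of Canuto, Nochetto, Stevenson, and Verani (2014), which the paper invokes without reproducing (the proposition is simply stated as a ``direct extension'' of that lemma from $N=1$ to general $N$). The key steps---Galerkin orthogonality on the nested spaces $V_{\mathbb{G}}\subset V_{\mathbb{G}^*}$, testing the discrete residual equation with $e_i\in V_{\mathbb{G}^*}$ so that only modes in $\mathbb{G}^*$ survive, and then Cauchy--Schwarz in the $H_p^{-1}/H_p^1$ duality together with coercivity---match the intended argument exactly, and the componentwise reduction followed by summation over $i=1,\dots,N$ is the correct way to lift the scalar result to the system case.
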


\begin{proposition}\label{dolfer}
	Suppose $\mathbb{G}\subset\mathbb{G}^*\subset\mathbb{Z}^d$ be nonempty index sets. Let $U_{\mathbb{G}}$ and $U_{\mathbb{G}^*}$  be discrete solutions of \eqref{weakformbvp} in $(V_\mathbb{G})^N$ and $(V_{\mathbb{G}^*})^N$, respectively. If
	\begin{equation*}
		\|U-U_{\mathbb{G}^*}\|_a^2\leq\mu\|U-U_\mathbb{G}\|_a^2
	\end{equation*}
	with $\mu\in(0,1)$, then $\mathbb{G}^*$ satisfies the D\"orfler condition, i.e.,
	\begin{equation*}
		\eta(U_\mathbb{G};\mathbb{G}^*)\geq\theta\eta(U_\mathbb{G}),
	\end{equation*}
	where $\theta=\sqrt{(1-\mu)\dfrac{\alpha_*}{\alpha^*}}$.
\end{proposition}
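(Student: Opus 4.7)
The plan is to chain together three standard ingredients: Galerkin orthogonality for the two nested discretizations, the preceding local upper bound, and the global lower bound from \eqref{estimatbvp}. This is essentially the well-known ``contraction implies D\"orfler'' argument, run componentwise (thanks to the product structure of $U$) and adapted to the planewave setting.

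First I would exploit the nestedness $\mathbb{G}\subset\mathbb{G}^*$, which gives $V_\mathbb{G}\subset V_{\mathbb{G}^*}$, together with the symmetry of $a(\cdot,\cdot)$ and the Galerkin relation
\[
a(U-U_{\mathbb{G}^*},V)=0\qquad\forall V\in (V_{\mathbb{G}^*})^N,
\]
to obtain the Pythagoras identity
\[
\|U-U_\mathbb{G}\|_a^2=\|U-U_{\mathbb{G}^*}\|_a^2+\|U_{\mathbb{G}^*}-U_\mathbb{G}\|_a^2.
\]
Feeding in the contraction hypothesis $\|U-U_{\mathbb{G}^*}\|_a^2\le\mu\|U-U_\mathbb{G}\|_a^2$ then yields the two-level lower bound
\[
\|U_{\mathbb{G}^*}-U_\mathbb{G}\|_a^2\ge(1-\mu)\|U-U_\mathbb{G}\|_a^2.
\]

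Next I would invoke the previous proposition (the local upper bound) to dominate the left-hand side by the localized estimator:
\[
\|U_{\mathbb{G}^*}-U_\mathbb{G}\|_a^2\le\frac{1}{\alpha_*}\eta^2(U_\mathbb{G};\mathbb{G}^*),
\]
and simultaneously use the global lower half of \eqref{estimatbvp} (applied to $W_\mathbb{G}=U_\mathbb{G}$) to bound the right-hand side from below:
\[
\|U-U_\mathbb{G}\|_a^2\ge\frac{1}{\alpha^*}\eta^2(U_\mathbb{G}).
\]
Combining the three displays gives
\[
\frac{1}{\alpha_*}\eta^2(U_\mathbb{G};\mathbb{G}^*)\ge(1-\mu)\|U-U_\mathbb{G}\|_a^2\ge\frac{1-\mu}{\alpha^*}\eta^2(U_\mathbb{G}),
\]
so rearranging and extracting the square root produces exactly the claimed D\"orfler inequality with the constant $\theta=\sqrt{(1-\mu)\alpha_*/\alpha^*}$.

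There is no substantive obstacle here, since the contraction-to-marking conversion is a soft argument that only uses the symmetric variational structure plus the two a posteriori bounds that have already been recorded. The only items that require mild care are making sure the Pythagoras step is legitimate in the vector-valued setting (which is immediate because the norm and the form split as sums over the $N$ components) and verifying that the estimator in the statement is the same $\eta$ (i.e.\ $\bar\eta$) that appears in \eqref{estimatbvp} and in the local upper bound; both are the residual-based quantity $\|\Pi_{\cdot}\bar r(U_\mathbb{G})\|_{H_p^{-1}}$, so the three inequalities genuinely chain together.
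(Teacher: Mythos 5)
Your proof is correct and is exactly the standard ``contraction implies D\"orfler'' argument; the paper itself gives no proof for this proposition (it is cited as a direct extension of Lemma~7.2 in \citet{canuto2014adaptive} from $N=1$ to arbitrary $N$), and your argument is precisely what that reference does, run componentwise as you note. The three ingredients you chain together --- the Pythagoras identity from nested Galerkin orthogonality, the local upper bound from the preceding proposition, and the lower half of \eqref{estimatbvp} --- are exactly right, and your remark on the mild notational issue (the paper writes $\eta$ in the local upper bound for the source problem while \eqref{estimatbvp} uses $\bar\eta$, both denoting $\|\Pi_{\cdot}\bar r(U_\mathbb{G})\|_{H_p^{-1}}$) is a fair observation rather than a gap.
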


Next, we introduce a function approximation class as follows
\begin{equation*}
	\mathcal {A}^s:=\{v\in H_p^1(\Omega):|v|_{s}<\infty\},
\end{equation*}
where
\begin{equation*}
	|v|_{s}=\sup\limits_{\varepsilon>0}\varepsilon\inf\limits_{\{\mathbb{G}:\mathbb{G}_0\subset\mathbb{G}=-\mathbb{G}, \inf_{v_\mathbb{G}\in V_\mathbb{G}}\|v-v_\mathbb{G}\|_{a}\leq\varepsilon\}}(|\mathbb{G}|-|\mathbb{G}_0|)^s,
\end{equation*}
We see that $\mathcal{A}^s$ is the class of functions which can be approximated within a given tolerance $\varepsilon$ by trigonometric polynomials related to a index set $\mathbb{G}$ with number of degrees of freedom $|\mathbb{G}|-|\mathbb{G}_0|\leq\varepsilon^{-1/s}|v|_s^{1/s}$.

To derive the complexity of Algorithm \ref{algo:FAPWM}, we do some preparations.
\begin{lemma}\label{decrease}
	Let $\mathbb{G}^{M_0}\subset\mathbb{G}_n\subset\mathbb{G}_{n+1}$ be subsets of $\mathbb{Z}^d$, $\{u_{k_i+l}\}_{l=1}^{q_i}$ be any orthonormal basis of $M(\lambda_{(i)})$ with $k_i=\sum_{j=1}^{i-1} q_i~(i=1,2,\ldots,m)$, $U=(u_1,u_2,\ldots,u_{k_m+q_m})$, $w^{\mathbb{G}_n,i}_j=\sum_{l=1}^{q_i}\alpha^{\mathbb{G}_n}_{i,l}(u_{k_i+j}) \lambda_{\mathbb{G}_n,k_i+l}Ku_{\mathbb{G}_n,k_i+l}(j=1,\ldots,q_i)$, $W^{\mathbb{G}_n,i}=(w^{\mathbb{G}_n,i}_1,\ldots,w^{\mathbb{G}_n,i}_{q_i})$, and $W^{\mathbb{G}_n}=(W^{\mathbb{G}_n,1},\ldots,W^{\mathbb{G}_n,m})$. If the following property holds
	\begin{equation*}
		\|U-\mathscr{E}_{\mathbb{G}_{n+1}} U\|_{a}\leq \beta\|U-\mathscr{E}_{\mathbb{G}_{n}} U\|_{a}
	\end{equation*}
	with a constant $\beta\in(0,1)$, then for the associated source problems
	\[
	a(w^{\mathbb{G}_n,i}_j,v)=(\sum_{l=1}^{q_i}\alpha^{\mathbb{G}_n}_{i,l}(u_{k_i+j}) \lambda_{\mathbb{G}_n,k_i+l}u_{\mathbb{G}_n,k_i+l},v),~\forall v\in V_{\mathbb{G}_n}
	\]
	with $j=1,\ldots,q_i,\,i=1,\ldots,m$, we have
	\begin{equation*}
		\|W^{\mathbb{G}_n}-P_{\mathbb{G}_{n+1}}W^{\mathbb{G}_n}\|_a\leq \widetilde{\beta}\|W^{\mathbb{G}_n}-P_{\mathbb{G}_n}W^{\mathbb{G}_n}\|_a,
	\end{equation*}
	where
	\begin{equation*}
		\tilde{\beta}=\frac{[(1+\delta)\beta^2+\hat{C}(1+\delta^{-1})\gamma_{\mathbb{G}_0}^2(1+\beta^2)]^{\frac{1}{2}}}{1-\tilde{C}\gamma_{\mathbb{G}_0}}\in(0,1)
	\end{equation*}
	provided $M_0\gg 1$. Here $\delta\in (0,1)$ satisfies $(1+\delta)\beta^2<1$.
\end{lemma}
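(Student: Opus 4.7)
The plan is to combine the two-level relation \eqref{twolevelrela} with the perturbation estimate \eqref{uminusw} and the identity $P_{\mathbb{G}_n}W^{\mathbb{G}_n}=\mathscr{E}_{\mathbb{G}_n}U$ (which follows from \eqref{eq:Elambda_u} applied componentwise), turning the eigenvalue contraction on $U-\mathscr{E}_{\mathbb{G}_n}U$ into a contraction for the associated source-problem projections of $W^{\mathbb{G}_n}$. The main point is that, since $\gamma_{\mathbb{G}_n}\le\gamma_{\mathbb{G}_0}\ll 1$ when $M_0\gg 1$, the $\mathcal O(\gamma)$ remainders in these perturbation statements can be absorbed.

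First I would use \eqref{twolevelrela}, squaring both sides and applying Young's inequality with parameter $\delta\in(0,1)$ chosen to satisfy $(1+\delta)\beta^{2}<1$, to obtain
\begin{equation*}
\|W^{\mathbb{G}_n}-P_{\mathbb{G}_{n+1}}W^{\mathbb{G}_n}\|_a^{2}
\le (1+\delta)\|U-\mathscr{E}_{\mathbb{G}_{n+1}}U\|_a^{2}
+\hat C(1+\delta^{-1})\gamma_{\mathbb{G}_n}^{2}\bigl(\|U-\mathscr{E}_{\mathbb{G}_{n+1}}U\|_a^{2}+\|U-\mathscr{E}_{\mathbb{G}_n}U\|_a^{2}\bigr).
\end{equation*}
Next I would invoke the hypothesis $\|U-\mathscr{E}_{\mathbb{G}_{n+1}}U\|_a\le\beta\|U-\mathscr{E}_{\mathbb{G}_n}U\|_a$ to replace both occurrences of the $(n+1)$-term on the right, which produces the numerator of $\tilde\beta$ multiplying $\|U-\mathscr{E}_{\mathbb{G}_n}U\|_a^{2}$.

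The final step is to pass from $\|U-\mathscr{E}_{\mathbb{G}_n}U\|_a$ to $\|W^{\mathbb{G}_n}-P_{\mathbb{G}_n}W^{\mathbb{G}_n}\|_a$. Using $P_{\mathbb{G}_n}W^{\mathbb{G}_n}=\mathscr{E}_{\mathbb{G}_n}U$, a reverse triangle inequality together with \eqref{uminusw} gives
\begin{equation*}
\|W^{\mathbb{G}_n}-P_{\mathbb{G}_n}W^{\mathbb{G}_n}\|_a
=\|W^{\mathbb{G}_n}-\mathscr{E}_{\mathbb{G}_n}U\|_a
\ge (1-\tilde C\gamma_{\mathbb{G}_n})\|U-\mathscr{E}_{\mathbb{G}_n}U\|_a,
\end{equation*}
which under $M_0\gg 1$ inverts to a clean upper bound for $\|U-\mathscr{E}_{\mathbb{G}_n}U\|_a$. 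Bounding $\gamma_{\mathbb{G}_n}\le\gamma_{\mathbb{G}_0}$ (monotonicity of $\rho_\Omega$ and $\delta_{\mathbb{G}}$ in $\mathbb{G}$) then yields precisely the stated expression for $\tilde\beta$.

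The main subtlety is to check $\tilde\beta<1$. Expanding the inequality $\tilde\beta<1$ reduces to
\begin{equation*}
(1+\delta)\beta^{2}+\hat C(1+\delta^{-1})\gamma_{\mathbb{G}_0}^{2}(1+\beta^{2})<(1-\tilde C\gamma_{\mathbb{G}_0})^{2},
\end{equation*}
and since $\delta$ was fixed so that $(1+\delta)\beta^{2}<1$ before seeing $\gamma_{\mathbb{G}_0}$, the remaining $\gamma_{\mathbb{G}_0}$-dependent terms on both sides are $\mathcal O(\gamma_{\mathbb{G}_0})$; thus taking $M_0$ large enough (so that $\gamma_{\mathbb{G}_0}$ is small) makes the inequality hold. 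This smallness-of-$\gamma_{\mathbb{G}_0}$ calibration, done consistently with the fixed choice of $\delta$, is the only step requiring care; the rest is an algebraic assembly of the two perturbation estimates already established.
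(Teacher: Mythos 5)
Your proposal is correct and follows essentially the same route as the paper's proof: square and apply Young's inequality to the two-level relation \eqref{twolevelrela} with $\delta$ fixed so that $(1+\delta)\beta^2<1$, substitute the hypothesised contraction, and then convert $\|U-\mathscr{E}_{\mathbb{G}_n}U\|_a$ back to $\|W^{\mathbb{G}_n}-P_{\mathbb{G}_n}W^{\mathbb{G}_n}\|_a$ via the identity $P_{\mathbb{G}_n}W^{\mathbb{G}_n}=\mathscr{E}_{\mathbb{G}_n}U$ and the $(1-\tilde C\gamma)$-type lower bound coming from \eqref{uminusw} (equivalently, from \eqref{eigenbvp}). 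The only cosmetic difference is that you derive that lower bound by an explicit reverse triangle inequality while the paper cites \eqref{eigenbvp} directly, and you carry $\gamma_{\mathbb{G}_n}$ before bounding by $\gamma_{\mathbb{G}_0}$; both choices are equivalent.
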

\begin{proof}
	It follows from \eqref{twolevelrela} and Young's inequality that
	\begin{align*}
		\|W^{\mathbb{G}_n}-P_{\mathbb{G}_{n+1}}W^{\mathbb{G}_n}\|_a^2&\leq (1+\delta)\|U-\mathscr{E}_{\mathbb{G}_{n+1}} U\|^2_a\\
		&\quad+\hat{C}(1+\delta^{-1})\gamma_{\mathbb{G}_0}^2
		(\|U-\mathscr{E}_{\mathbb{G}_{n}} U\|_a^2+\|U-\mathscr{E}_{\mathbb{G}_{n+1}} U\|_a^2),
	\end{align*}
	where $\delta\in(0,1)$ is chosen to satisfy
	\begin{equation*}
		(1+\delta)\beta^2<1.
	\end{equation*}
	By the assumption
	\begin{equation*}
		\|U-\mathscr{E}_{\mathbb{G}_{n+1}} U\|_{a}^2\leq \beta^2\|U-\mathscr{E}_{\mathbb{G}_{n}} U\|_{a}^2,
	\end{equation*}
	we obtain
	\begin{equation*}
		\|W^{\mathbb{G}_n}-P_{\mathbb{G}_{n+1}}W^{\mathbb{G}_n}\|_a^2\leq [(1+\delta)\beta^2+\hat{C}(1+\delta^{-1})\gamma_{\mathbb{G}_0}^2(1+\beta^2)]\|U-\mathscr{E}_{\mathbb{G}_{n}} U\|_{a,\Omega}^2,
	\end{equation*}
	Since \eqref{eigenbvp} implies
	\begin{equation*}
		\|U-\mathscr{E}_{\mathbb{G}_{n}} U\|_{a,\Omega}\leq\frac{1}{1-\tilde{C}\gamma_{\mathbb{G}_0}}\|W^{\mathbb{G}_n}-P_{\mathbb{G}_{n}}W^{\mathbb{G}_n}\|_a
	\end{equation*}
	provided $M_0\gg 1$, we have
	\begin{equation*}
		\|W^{\mathbb{G}_n}-P_{\mathbb{G}_{n+1}}W^{\mathbb{G}_n}\|_a^2\leq \frac{[(1+\delta)\beta^2+\hat{C}(1+\delta^{-1})\gamma_{\mathbb{G}_0}^2(1+\beta^2)]}
		{(1-\tilde{C}\gamma_{\mathbb{G}_0})^2}\|W^{\mathbb{G}_n}-P_{\mathbb{G}_{n}}W^{\mathbb{G}_n}\|_a^2.
	\end{equation*}
	Let
	\begin{equation}\label{debeta}
		\tilde{\beta}=\frac{[(1+\delta)\beta^2+\hat{C}(1+\delta^{-1})\gamma_{\mathbb{G}_0}^2(1+\beta^2)]^{\frac{1}{2}}}{1-\tilde{C}\gamma_{\mathbb{G}_0}},
	\end{equation}
	then $\tilde{\beta}\in(0,1)$ and
	\begin{equation*}
		\|W^{\mathbb{G}_n}-P_{\mathbb{G}_{n+1}}W^{\mathbb{G}_n}\|_a\leq \widetilde{\beta} \|W^{\mathbb{G}_n}-P_{\mathbb{G}_{n}}W^{\mathbb{G}_n}\|_a
	\end{equation*}
	provided $M_0\gg 1$.
\end{proof}

The following result is a direct consequence of $\mathscr{E}_{\mathbb{G}_n} U = P_{\mathbb{G}_n} W^{\mathbb{G}_n}$, Proposition \ref{dolfer}, and Lemma \ref{decrease}.
\begin{corollary}\label{eigendorfle}
	Let $\mathbb{G}^{M_0}\subset\mathbb{G}_n\subset\mathbb{G}_{n+1}$ be subsets of $\mathbb{Z}^d$, $\{u_{k_i+l}\}_{l=1}^{q_i}$ be any orthonormal basis of $M(\lambda_{(i)})$ with $k_i=\sum_{j=1}^{i-1} q_i~(i=1,2,\ldots,m)$, $U=(u_1,u_2,\ldots,u_{k_m+q_m})$. Suppose that the following decrease property holds
	\begin{equation*}
		\|U-\mathscr{E}_{\mathbb{G}_{n+1}} U\|_{a}\leq \beta\|U-\mathscr{E}_{\mathbb{G}_{n}} U\|_{a},
	\end{equation*}
	with a constant $\beta\in(0,1)$.
	Then $\mathbb{G}_{n+1}$ satisfies the D\"orfler condition, i.e.,
	\begin{equation*}
		\eta(\mathscr{E}_{\mathbb{G}_n }U;\mathbb{G}_{n+1})\geq\hat{\theta}\eta(\mathscr{E}_{\mathbb{G}_n} U)
	\end{equation*}
	with $\hat{\theta}=\sqrt{(1-\tilde{\beta})\frac{\alpha_*}{\alpha^*}}\in(0,1)$ provided $M_0\gg 1$, where $\tilde{\beta}$ is defined in \eqref{debeta}.
\end{corollary}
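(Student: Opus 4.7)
The plan is to reduce the statement to an application of Proposition \ref{dolfer} for the auxiliary source systems whose solutions are the functions $w^{\mathbb{G}_n,i}_j$, and then to translate the resulting Dörfler inequality for those source problems into one for the eigenvalue estimator via the pointwise identification of residuals $\bar{r}(P_{\mathbb{G}_n}w^{\mathbb{G}_n,i}_j) = r(E_{\mathbb{G}_n,i}u_{k_i+j})$.

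The first step is to feed the hypothesis $\|U-\mathscr{E}_{\mathbb{G}_{n+1}}U\|_a \le \beta\|U-\mathscr{E}_{\mathbb{G}_n}U\|_a$ into Lemma \ref{decrease}, which converts it into the Galerkin contraction
\begin{equation*}
\|W^{\mathbb{G}_n} - P_{\mathbb{G}_{n+1}} W^{\mathbb{G}_n}\|_a \le \tilde{\beta}\,\|W^{\mathbb{G}_n} - P_{\mathbb{G}_n} W^{\mathbb{G}_n}\|_a
\end{equation*}
for the source system, with $\tilde{\beta}\in(0,1)$ the constant from \eqref{debeta}. Since $W^{\mathbb{G}_n}$ is by definition the exact solution of the source problem whose $j$-th right-hand side in block $i$ is $\sum_{l=1}^{q_i}\alpha^{\mathbb{G}_n}_{i,l}(u_{k_i+j})\lambda_{\mathbb{G}_n,k_i+l}u_{\mathbb{G}_n,k_i+l}$, this is precisely the contraction hypothesis of Proposition \ref{dolfer} (extended to $N$ right-hand sides, as is routine) with $\mu=\tilde{\beta}$. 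Applying that proposition yields
\begin{equation*}
\bar{\eta}(P_{\mathbb{G}_n} W^{\mathbb{G}_n};\mathbb{G}_{n+1}) \ge \hat{\theta}\,\bar{\eta}(P_{\mathbb{G}_n} W^{\mathbb{G}_n}), \qquad \hat{\theta} = \sqrt{(1-\tilde{\beta})\alpha_*/\alpha^*}\in(0,1).
\end{equation*}

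The final step is the identification of the two estimators. Because $P_{\mathbb{G}_n}w^{\mathbb{G}_n,i}_j = E_{\mathbb{G}_n,i}u_{k_i+j}$ and the source right-hand side was built to equal the eigenpair combination $\sum_l \alpha^{\mathbb{G}_n}_{i,l}\lambda_{\mathbb{G}_n,k_i+l}u_{\mathbb{G}_n,k_i+l}$, the definitions of $\bar{r}$ and $r$ given in Section \ref{sec:APWM} yield the termwise identity $\bar{r}(P_{\mathbb{G}_n}w^{\mathbb{G}_n,i}_j) = r(E_{\mathbb{G}_n,i}u_{k_i+j})$. Summing the $H_p^{-1}$-norms of the Fourier projections of these residuals over the components $(i,j)$ gives $\bar{\eta}(P_{\mathbb{G}_n}W^{\mathbb{G}_n};\mathbb{G}^{*}) = \eta(\mathscr{E}_{\mathbb{G}_n}U;\mathbb{G}^{*})$ for both $\mathbb{G}^{*} = \mathbb{G}_{n+1}$ and $\mathbb{G}^{*} = \mathbb{Z}^d$. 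Substituting these identities into the bound above produces the claimed Dörfler property for $\mathscr{E}_{\mathbb{G}_n} U$. The only obstacle is bookkeeping: one has to verify that the multicomponent source estimator $\bar{\eta}$ (the sum over all $N$ right-hand sides) aligns block-by-block with $\eta(\mathscr{E}_{\mathbb{G}_n}U;\cdot)$ as defined in Section \ref{sec:APWM}, after which the corollary is immediate from the two cited results.
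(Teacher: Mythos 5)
Your proposal is correct and follows the same route the paper implicitly intends: Lemma \ref{decrease} to transfer the contraction to the source problem $W^{\mathbb{G}_n}$, Proposition \ref{dolfer} to obtain the D\"orfler property for the source estimator, and the identity $\mathscr{E}_{\mathbb{G}_n}U = P_{\mathbb{G}_n}W^{\mathbb{G}_n}$ together with the term-by-term residual identification $\bar{r}(P_{\mathbb{G}_n}w^{\mathbb{G}_n,i}_j) = r(E_{\mathbb{G}_n,i}u_{k_i+j})$ to translate back to $\eta(\mathscr{E}_{\mathbb{G}_n}U;\cdot)$. The only small imprecision is that Lemma \ref{decrease} gives $\mu=\tilde{\beta}^2$ in the (squared) hypothesis of Proposition \ref{dolfer}, which is stronger than the $\mu=\tilde{\beta}$ you used; since $\tilde{\beta}^2\le\tilde{\beta}<1$ this only improves the resulting $\theta$, so the stated $\hat{\theta}$ follows a fortiori and the argument stands.
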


Different from the analysis of the convergence rate, additional requirements are needed for the analysis of the quasi-optimal complexity.

\begin{assumption}\label{asp:dorfler-minimal}
	$\delta \mathbb{G}_n=\operatorname{\textup{D\"ORFLER}}(\eta,U_{\mathbb{G}_n},\mathbb{G}_n,\theta)$ is of minimal cardinality for any $n\in\mathbb{N}$.
\end{assumption}

\begin{lemma}\label{lemma:diff-Lambda}
	Let $\lambda_{k_0+1}\le\lambda_{k_0+2}\le\cdots\le\lambda_{k_0+N}$ be a cluster of eigenvalues of \eqref{weakform}, which are $m$ eigenvalues $\lambda_{(1)}<\lambda_{(2)}<\cdots<\lambda_{(m)}$ with the corresponding eigenspace $M(\lambda_{(i)})$ if not accounting the multiplicity. Assume that the multiplicity of each eigenvalue is $q_i$ with $N=\sum_{i=1}^m q_i$ and $M(\lambda_{(i)})\subset\mathcal {A}^s$ for some $s$. Let $\{u_{k_i+l}\}_{l=1}^{q_i}$ be any orthonormal basis of $M(\lambda_{(i)})$ with $k_i=\sum_{j=1}^{i-1} q_i$ and $\{(\lambda_{\mathbb{G}_{n},l},u_{\mathbb{G}_{n},l})\in\mathbb{R}\times V_{\mathbb{G}_{n}}:l=1,\ldots,N\}_{n\in \mathbb{N}}$ be a sequence of planewave approximations produced by Algorithm \ref{algo:FAPWM} with the marking parameter $\tilde{\theta}\in\left(0,\sqrt{\dfrac{\alpha_*}{3\alpha^*}}\right)$ and $\displaystyle\zeta\in\left(0,\frac{\sqrt{\frac{\alpha_*}{3\alpha^*}}-\tilde{\theta}}{1+\sqrt{\frac{\alpha_*}{3\alpha^*}}} \right)$. Set $M_{\mathbb{G}_n}(\lambda_{(i)})= \operatorname{span} \{u_{\mathbb{G}_n,k_i+1},\ldots,u_{\mathbb{G}_{n},k_i+q_i}\}$. If Assumption \ref{asp:dorfler-minimal} is satisfied, then
	\begin{equation}\label{complexity}
		|\mathbb{G}_{n+1}|-|\mathbb{G}_n|\leq C\|U-\mathscr{E}_{\mathbb{G}_n} U\|_{a}^{-1/s}N^{1/2s}\sum_{l=1}^N|u_l|_s^{1/s}
	\end{equation}
	provided $M_0\gg 1$, where the constant $C$ depends on the discrepancy between $\zeta$ and $\displaystyle\frac{\sqrt{\frac{\alpha_*}{3\alpha^*}}-\tilde{\theta}}{1+\sqrt{\frac{\alpha_*}{3\alpha^*}}}$.
\end{lemma}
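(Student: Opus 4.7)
The plan is to build, at each step $n$, a ``near-optimal'' enrichment set whose cardinality is controlled by the approximation class of the exact eigenfunctions, show that this enrichment would already drive Dörfler marking, and then invoke the minimality assumption to bound $|\delta\mathbb{G}_n|$ by the cardinality of the near-optimal enrichment.

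First I would use the hypothesis $M(\lambda_{(i)})\subset\mathcal{A}^s$. For each orthonormal basis function $u_l$ and any target accuracy $\varepsilon>0$, the definition of $\mathcal{A}^s$ produces an index set $\mathbb{G}^{\varepsilon}_l\supset\mathbb{G}_0$ with
\[
|\mathbb{G}^{\varepsilon}_l|-|\mathbb{G}_0|\lesssim \varepsilon^{-1/s}|u_l|_s^{1/s},\qquad \inf_{v\in V_{\mathbb{G}^{\varepsilon}_l}}\|u_l-v\|_a\le\varepsilon.
\]
Setting $\varepsilon=\mu\|U-\mathscr{E}_{\mathbb{G}_n}U\|_a/\sqrt{N}$ for a parameter $\mu\in(0,1)$ to be fixed, and putting $\mathbb{G}^\star:=\mathbb{G}_n\cup\bigcup_{l=1}^{N}\mathbb{G}^{\varepsilon}_l$ (symmetrized so that $\mathbb{G}^\star=-\mathbb{G}^\star$), one has $\inf_{V\in(V_{\mathbb{G}^\star})^N}\|U-V\|_a\le \mu\|U-\mathscr{E}_{\mathbb{G}_n}U\|_a$ and
\[
|\mathbb{G}^\star|-|\mathbb{G}_n|\le\sum_{l=1}^{N}(|\mathbb{G}^{\varepsilon}_l|-|\mathbb{G}_0|)\lesssim \varepsilon^{-1/s}\sum_{l=1}^{N}|u_l|_s^{1/s}\cong \|U-\mathscr{E}_{\mathbb{G}_n}U\|_a^{-1/s}N^{1/2s}\sum_{l=1}^{N}|u_l|_s^{1/s}.
\]

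Second, I would show that the enriched set $\mathbb{G}^\star$ automatically satisfies the Dörfler property used inside Algorithm~\ref{algo:FAPWM}. Because solving the eigenvalue problem on $V_{\mathbb{G}^\star}$ is at least as accurate as the best approximation from $(V_{\mathbb{G}^\star})^N$ (up to the asymptotic factor from Theorem~\ref{thm:u-Eu-appro-w-Pw}), one obtains $\|U-\mathscr{E}_{\mathbb{G}^\star}U\|_a\le\beta\|U-\mathscr{E}_{\mathbb{G}_n}U\|_a$ with $\beta$ as close to $\mu$ as we wish once $M_0\gg 1$. Corollary~\ref{eigendorfle} then yields $\eta(\mathscr{E}_{\mathbb{G}_n}U;\mathbb{G}^\star)\ge\hat\theta\,\eta(\mathscr{E}_{\mathbb{G}_n}U)$ with $\hat\theta$ arbitrarily close to $\sqrt{\alpha_\ast/\alpha^\ast}$ when $\mu$ is small. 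Applying Lemma~\ref{dolferelamgdauandu} transfers this to $\eta(U_{\mathbb{G}_n};\mathbb{G}^\star)\ge\theta\,\eta(U_{\mathbb{G}_n})$ with $\theta$ arbitrarily close to $\sqrt{\alpha_\ast/(3\alpha^\ast)}$, and finally Lemma~\ref{lem:estimatorEqui} converts this to the computable estimator $\tilde\eta(U_{\mathbb{G}_n};\mathbb{G}^\star)\ge\tilde\theta'\,\tilde\eta(U_{\mathbb{G}_n})$ with $\tilde\theta'=\theta-\zeta(1+\theta)$. The hypotheses $\tilde\theta<\sqrt{\alpha_\ast/(3\alpha^\ast)}$ and $\zeta<(\sqrt{\alpha_\ast/(3\alpha^\ast)}-\tilde\theta)/(1+\sqrt{\alpha_\ast/(3\alpha^\ast)})$ leave enough slack to pick $\mu$ (hence $\beta,\hat\theta,\theta$) so that $\tilde\theta'\ge\tilde\theta$.

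Third, I would invoke Assumption~\ref{asp:dorfler-minimal}: since $\mathbb{G}^\star\setminus\mathbb{G}_n$ is a symmetric subset of $\mathbb{G}_n^c$ satisfying Dörfler's criterion for $\tilde\eta$ with parameter $\tilde\theta$, the minimal-cardinality choice of $\delta\mathbb{G}_n$ gives $|\delta\mathbb{G}_n|\le|\mathbb{G}^\star\setminus\mathbb{G}_n|$. Combined with the cardinality estimate from the first step and $|\mathbb{G}_{n+1}|-|\mathbb{G}_n|=|\delta\mathbb{G}_n\setminus\mathbb{G}_n|\le|\delta\mathbb{G}_n|$, this yields exactly \eqref{complexity}, with the hidden constant controlled by $1/\mu$, hence by the gap between $\zeta$ and $(\sqrt{\alpha_\ast/(3\alpha^\ast)}-\tilde\theta)/(1+\sqrt{\alpha_\ast/(3\alpha^\ast)})$.

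The main obstacle is the bookkeeping in the second step: we have three successive marking parameters ($\hat\theta$ for $\mathscr{E}_{\mathbb{G}_n}U$, $\theta$ for $U_{\mathbb{G}_n}$, $\tilde\theta'$ for the computable $\tilde\eta$), each losing a factor when passing through Corollary~\ref{eigendorfle}, Lemma~\ref{dolferelamgdauandu} (the $\sqrt{1/3}$ loss), and Lemma~\ref{lem:estimatorEqui} (the $\zeta$ loss). One must verify that the explicit choice of the reduction factor $\mu$ (equivalently, the constant $C$ in \eqref{complexity}) is compatible with the prescribed window for $\tilde\theta$ and $\zeta$; this is the reason the hypothesis requires both parameters to lie strictly below the stated thresholds and is the source of the dependence of $C$ on the ``discrepancy'' term in the statement.
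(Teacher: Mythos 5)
Your proposal is correct and follows essentially the same route as the paper: construct a near-optimal overlay from the $\mathcal{A}^s$ hypothesis, show it achieves an error reduction (the paper does this via Proposition \ref{euandpu} and a quasi-monotonicity step rather than Theorem \ref{thm:u-Eu-appro-w-Pw}, which you cite), pass the reduction through Corollary \ref{eigendorfle}, Lemma \ref{dolferelamgdauandu} and Lemma \ref{lem:estimatorEqui} to recover a D\"orfler condition for $\tilde\eta$ with parameter at least $\tilde\theta$, and invoke minimality together with \eqref{eq:deltaG-Gc}. The only substantive difference is that you compare $\|U-\mathscr{E}_{\mathbb{G}^\star}U\|_a$ directly to the best approximation in $V_{\mathbb{G}^\star}$ (slightly streamlining the paper's intermediate step through $\mathbb{G}_\varepsilon$), and you leave the explicit bookkeeping of $\beta_0,\beta_1,\check\beta,\hat\beta$ implicit, which is fine given that the slack argument you outline at the end is exactly the paper's mechanism.
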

\begin{proof}
	Choose $\beta,\beta_1\in (0,1)$ satisfying $\beta_1\in(0,\frac12\beta)$ and
	\begin{equation}\label{eq:gamma}
		\zeta<\frac{\sqrt{(1-\beta)\frac{\alpha_*}{3\alpha^*}}-\tilde{\theta}}{1+\sqrt{(1-\beta)\frac{\alpha_*}{3\alpha^*}}},
	\end{equation}
	and set $\varepsilon=\beta_1\|U-\mathscr{E}_{\mathbb{G}_n}U\|_{a}$. Let $\delta_1\in(0,1)$ be some constant satisfying
	\begin{equation}\label{beta}
		(1+\delta_1)^2\beta_1^2< \frac14\beta^2,
	\end{equation}
	which indicates
	\begin{equation}\label{beta1}
		(1+\delta_1)\beta_1^2< \frac14.
	\end{equation}
	
	Let $\mathbb{G}_{\varepsilon,l}\supset\mathbb{G}_0$ with minimal cardinality satisfies $\mathbb{G}_{\varepsilon,l}=-\mathbb{G}_{\varepsilon,l}$ and 
	\begin{equation*}
		\|u_l-P_{\mathbb{G}_{\varepsilon,l}}u_l\|_{a}\leq\frac{\varepsilon}{\sqrt{N}},~l =1,\ldots,N,
	\end{equation*}
	which together with $\|u_l-P_{\mathbb{G}_{\varepsilon}}u_l\|_{a}\le\|u_l-P_{\mathbb{G}_{\varepsilon,l}}u_l\|_{a}$ leads to
	\begin{equation}\label{error1}
		\|U-P_{\mathbb{G}_\varepsilon}U\|_{a}\leq\varepsilon,
	\end{equation}
	where $\mathbb{G}_\varepsilon=\bigcup_{l=1}^N \mathbb{G}_{\varepsilon,l}$. Thus, by the definition of $\mathcal {A}^s$ and $\varepsilon$, we get that
	\begin{equation*}
		|\mathbb{G}_{\varepsilon,l}|-|\mathbb{G}_0|\leq \beta_1^{-1/s}\|U-\mathscr{E}_{\mathbb{G}_n}U\|_{a}^{-1/s}N^{1/2s}|u_l|_s^{1/s},~l=1,\ldots,N,
	\end{equation*}
	which implies
	\begin{equation*}
		|\mathbb{G}_{\varepsilon}|-|\mathbb{G}_0|\leq\sum_{l=1}^N(|\mathbb{G}_{\varepsilon,l}|-|\mathbb{G}_0|)\le \beta_1^{-1/s}\|U-\mathscr{E}_{\mathbb{G}_n}U\|_{a}^{-1/s} \left(N^{1/2s}\sum_{l=1}^N |u_l|_s^{1/s}\right).
	\end{equation*}
	Let $\mathbb{G}_{n,+}=\mathbb{G}_n\cup\mathbb{G}_\varepsilon$. It follows from $\mathbb{G}_0\subset\mathbb{G}_n\cap\mathbb{G}_\varepsilon$ that
	\begin{equation}\label{ineq:Gn+}
		|\mathbb{G}_{n,+}|-|\mathbb{G}_n|\leq |\mathbb{G}_\varepsilon|-|\mathbb{G}_0|.
	\end{equation}
	
	Let
	\begin{align*}
		w^{\mathbb{G}_\varepsilon,i}_l&=\sum_{j=1}^{q_i}\alpha^{\mathbb{G}_\varepsilon}_{i,j}(u_{k_i+l}) \lambda_{\mathbb{G}_\varepsilon,k_i+j}Ku_{\mathbb{G}_\varepsilon,k_i+j}\\ &=K\left(\sum_{j=1}^{q_i}\alpha^{\mathbb{G}_\varepsilon}_{i,j}(u_{k_i+l}) \lambda_{\mathbb{G}_\varepsilon,k_i+j}u_{\mathbb{G}_\varepsilon,k_i+j}\right), \quad l=1,\ldots,q_i,
	\end{align*}
	then it follows
	\begin{equation*}
		Lw^{\mathbb{G}_\varepsilon,i}_l=\sum_{j=1}^{q_i}\alpha^{\mathbb{G}_\varepsilon}_{i,j}(u_{k_i+l}) \lambda_{\mathbb{G}_\varepsilon,k_i+j}u_{\mathbb{G}_\varepsilon,k_i+j}.
	\end{equation*}
	Thus $\mathbb{G}_\varepsilon\subset\mathbb{G}_{n,+}$ yields
	\begin{equation*}
		\|W^{\mathbb{G}_\varepsilon}-P_{\mathbb{G}_{n,+}}W^{\mathbb{G}_\varepsilon}\|_{a}\leq \|W^{\mathbb{G}_\varepsilon}-P_{\mathbb{G}_{\varepsilon}}W^{\mathbb{G}_\varepsilon}\|_{a}.
	\end{equation*}
	We see from the proof of Theorem \ref{bvpeig} that
	\begin{equation}\label{error2}
		\|U-\mathscr{E}_{\mathbb{G}_{n,+}}U\|_{a}\leq \beta_0\|U-\mathscr{E}_{\mathbb{G}_{\varepsilon}}U\|_{a},
	\end{equation}
	where
	\begin{equation*}
		\beta_0=\left(\frac{(1+\delta_1)+C_4\delta_1^{-1}\gamma_{\mathbb{G}_0}}
		{1-C_4\delta_1^{-1}\gamma^2_{\mathbb{G}_0}}\right)^{\frac{1}{2}}
	\end{equation*}
	and $C_4$ is the constant in the proof of Theorem \ref{bvpeig}.
	Combining \eqref{error1}, \eqref{error2}, with Proposition \ref{euandpu}, we obtain
	\begin{equation*}
		\|U-\mathscr{E}_{\mathbb{G}_{n,+}}U\|_{a}\leq \check{\beta}\|U-\mathscr{E}_{\mathbb{G}_{n}}U\|_{a}
	\end{equation*}
	provided $M_0\gg 1$, where $\check{\beta}=2\beta_0\beta_1$. Note that \eqref{beta1} and $M_0\gg 1$ imply $\check{\beta}\in(0,1)$. Thus It follows from Corollary \ref{eigendorfle} that
	\begin{equation*}
		\eta(\mathscr{E}_{\mathbb{G}_{n}}U;\mathbb{G}_{n,+})\geq\check{\theta}\eta(\mathscr{E}_{\mathbb{G}_{n}}U),
	\end{equation*}
	where $\check{\theta}=\sqrt{(1-\hat{\beta})\dfrac{\alpha_*}{\alpha^*}}, \hat{\beta}=\dfrac{[(1+\delta_1)\check{\beta}^2+\hat{C}(1+\delta_1^{-1})\gamma_{\mathbb{G}_0}^2(1+\check{\beta}^2)]^{\frac{1}{2}}}{1-\tilde{C}\gamma_{\mathbb{G}_0}}$.
	
	We conclude from \eqref{eq:deltaG-Gc}, Lemmas \ref{lem:estimatorEqui} and \ref{dolferelamgdauandu} that
	\begin{equation*}
		\tilde{\eta}(U_{\mathbb{G}_{n}};\mathbb{G}_{n,+}\setminus \mathbb{G}_n)\geq\check{\theta}'\tilde{\eta}(U_{\mathbb{G}_{n}}),
	\end{equation*}
	where $\check{\theta}'=\sqrt{\dfrac13}\check{\theta}-\zeta\left(1+\sqrt{\dfrac13}\check{\theta}\right)=\sqrt{(1-\hat{\beta})\dfrac{\alpha_*}{3\alpha^*}}-\zeta\left(1+\sqrt{(1-\hat{\beta})\dfrac{\alpha_*}{3\alpha^*}}\right)$. It is observed from $M_0\gg 1$ and \eqref{beta} that $\hat{\beta}\in(0,\beta)$, which together with \eqref{eq:gamma} leads to $\check{\theta}'>\tilde{\theta}$.
	Since $\delta\mathbb{G}_n$ constructed by D\"orfler marking strategy satisfies
	\begin{equation*}
		\tilde{\eta}(U_{\mathbb{G}_n};\delta\mathbb{G}_n)\geq\tilde{\theta}\eta(U_{\mathbb{G}_n}),
	\end{equation*} 
	with minimal cardinality and $\delta\mathbb{G}_n\subset\mathbb{G}_n^c$, we obtain from \eqref{ineq:Gn+} that
	\begin{equation*}
		\begin{split}
			|\mathbb{G}_{n+1}|-|\mathbb{G}_n|&=|\delta\mathbb{G}_n|\le|\mathbb{G}_{n,+}\setminus \mathbb{G}_n|=|\mathbb{G}_{n,+}|-|\mathbb{G}_n|\leq |\mathbb{G}_\varepsilon|-|\mathbb{G}_0|\\
			&\leq\beta_1^{-1/s}\|U-\mathscr{E}_{\mathbb{G}_n}U\|_{a}^{-1/s} \left(N^{1/2s}\sum_{l=1}^N|u_l|_s^{1/s}\right),
		\end{split}
	\end{equation*}
	which is \eqref{complexity} with an explicit dependence on the discrepancy between $\zeta$ and $\displaystyle\frac{\sqrt{\frac{\alpha_*}{3\alpha^*}}-\tilde{\theta}}{1+\sqrt{\frac{\alpha_*}{3\alpha^*}}}$.
\end{proof}

Finally, we obtain that Algorithm \ref{algo:FAPWM} possesses quasi-optimal complexity.
\begin{theorem}\label{complexitytheorem}
	Let $\lambda_{k_0+1}\le\lambda_{k_0+2}\le\cdots\le\lambda_{k_0+N}$ be a cluster of eigenvalues of \eqref{weakform}, which are $m$ eigenvalues $\lambda_{(1)}<\lambda_{(2)}<\cdots<\lambda_{(m)}$ with the corresponding eigenspace $M(\lambda_{(i)})$ if not accounting the multiplicity. Assume that the multiplicity of each eigenvalue is $q_i$ with $N=\sum_{i=1}^m q_i$ and $M(\lambda_{(i)})\subset\mathcal {A}^s$ for some $s$. Let $\{u_{k_i+l}\}_{l=1}^{q_i}$ be any orthonormal basis of $M(\lambda_{(i)})$ with $k_i=\sum_{j=1}^{i-1} q_i$ and $\{(\lambda_{\mathbb{G}_{n},l},u_{\mathbb{G}_{n},l})\in\mathbb{R}\times V_{\mathbb{G}_{n}}:l=1,\ldots,N\}_{n\in \mathbb{N}}$ is a sequence of planewave approximations produced by Algorithm \ref{algo:FAPWM} with marking parameter $\tilde{\theta}\in\left(0,\sqrt{\dfrac{\alpha_*}{3\alpha^*}}\right)$ and $\displaystyle\zeta\in\left(0,\frac{\sqrt{\frac{\alpha_*}{3\alpha^*}}-\tilde{\theta}}{1+\sqrt{\frac{\alpha_*}{3\alpha^*}}} \right)$. Set $M_{\mathbb{G}_n}(\lambda_{(i)})= \operatorname{span} \{u_{\mathbb{G}_n,k_i+1},\ldots,u_{\mathbb{G}_{n},k_i+q_i}\}$.  If Assumption \ref{asp:dorfler-minimal} is satisfied, then the $n$-th iteration solution space $\mathscr{M}_{\mathbb{G}_n}$ satisfies
	\begin{equation}\label{ineq:E U-opt-bound}
		\|U-\mathscr{E}_{\mathbb{G}_n} U\|_{a}\lesssim (|\mathbb{G}_n|-|\mathbb{G}_0|)^{-s},
	\end{equation}
	\begin{equation}\label{ineq:lambda-opt-bound}
		\lambda_{\mathbb{G}_{n},k_i+l}-\lambda_{(i)}\lesssim (|\mathbb{G}_n|-|\mathbb{G}_0|)^{-2s},\quad l=1,\ldots,q_i,~i=1,\ldots,m
	\end{equation}
	provided $M_0\gg 1$.
\end{theorem}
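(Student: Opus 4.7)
The plan is to combine the linear convergence of Theorem~\ref{bvpeig} with the one-step cardinality estimate of Lemma~\ref{lemma:diff-Lambda}, then translate the resulting bound on the eigenspace error into the eigenvalue bound via \eqref{lambdaineq}. Write $e_k := \|U-\mathscr{E}_{\mathbb{G}_k}U\|_a$ for brevity.

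\textbf{Step 1 (telescoping to bound $|\mathbb{G}_n|-|\mathbb{G}_0|$).} Since the hypotheses of Theorem~\ref{bvpeig} and Lemma~\ref{lemma:diff-Lambda} are in force (note the marking parameter range is exactly the one required by Lemma~\ref{lemma:diff-Lambda}), we have simultaneously $e_{k+1}\le\alpha\,e_k$ with some $\alpha\in(0,1)$ and, for every $0\le k\le n-1$,
\begin{equation*}
|\mathbb{G}_{k+1}|-|\mathbb{G}_k|\ \lesssim\ e_k^{-1/s}N^{1/(2s)}\sum_{l=1}^N|u_l|_s^{1/s}.
\end{equation*}
Iterating the linear reduction gives $e_n\le\alpha^{n-k}e_k$, hence $e_k^{-1/s}\le\alpha^{(n-k)/s}\,e_n^{-1/s}$. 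Summing in $k$ from $0$ to $n-1$ and using $\mathbb{G}_0\subset\mathbb{G}_k\subset\mathbb{G}_{k+1}$ to telescope the left-hand side,
\begin{equation*}
|\mathbb{G}_n|-|\mathbb{G}_0|\ \le\ \sum_{k=0}^{n-1}\bigl(|\mathbb{G}_{k+1}|-|\mathbb{G}_k|\bigr)\ \lesssim\ e_n^{-1/s}\sum_{k=0}^{n-1}\alpha^{(n-k)/s}\ \lesssim\ \frac{\alpha^{1/s}}{1-\alpha^{1/s}}\,e_n^{-1/s},
\end{equation*}
where the constant absorbs $N^{1/(2s)}\sum_{l}|u_l|_s^{1/s}$, which is finite by the assumption $M(\lambda_{(i)})\subset\mathcal{A}^s$. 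Solving for $e_n$ yields \eqref{ineq:E U-opt-bound}.

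\textbf{Step 2 (from eigenspace error to eigenvalue error).} For each $i$, since $M_{\mathbb{G}_n}(\lambda_{(i)})\subset V_{\mathbb{G}_n}$,
\begin{equation*}
\delta_{\mathbb{G}_n}(\lambda_{(i)})\ =\ \sup_{\substack{w\in M(\lambda_{(i)})\\ \|w\|_a=1}}\inf_{v\in V_{\mathbb{G}_n}}\|w-v\|_a\ \le\ d_{H_p^1(\Omega)}\bigl(M(\lambda_{(i)}),M_{\mathbb{G}_n}(\lambda_{(i)})\bigr)\ \le\ \delta_{H_p^1(\Omega)}(\mathscr{M},\mathscr{M}_{\mathbb{G}_n}),
\end{equation*}
and the argument establishing Theorem~\ref{decreasespace} already shows $\delta_{H_p^1(\Omega)}(\mathscr{M},\mathscr{M}_{\mathbb{G}_n})\lesssim \|U-\mathscr{E}_{\mathbb{G}_n}U\|_a=e_n$. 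Plugging into \eqref{lambdaineq} gives
\begin{equation*}
\lambda_{\mathbb{G}_n,k_i+l}-\lambda_{(i)}\ \lesssim\ \delta_{\mathbb{G}_n}(\lambda_{(i)})^2\ \lesssim\ e_n^2\ \lesssim\ (|\mathbb{G}_n|-|\mathbb{G}_0|)^{-2s},
\end{equation*}
which is \eqref{ineq:lambda-opt-bound}.

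\textbf{Main obstacle.} The nontrivial part is really contained in Lemma~\ref{lemma:diff-Lambda}; once its one-step cardinality estimate is in hand, the complexity bound is obtained by a routine geometric-series argument, so the present theorem is essentially a packaging of the linear reduction of Theorem~\ref{bvpeig} together with Lemma~\ref{lemma:diff-Lambda}. The only point that requires mild care is making sure the choice of $\tilde\theta$ and $\zeta$ in the statement is exactly the range under which Lemma~\ref{lemma:diff-Lambda} applies, so that Assumption~\ref{asp:dorfler-minimal} together with the minimal-cardinality D\"orfler marker gives the clean per-step bound above without losing the $(|\mathbb{G}_n|-|\mathbb{G}_0|)^{-s}$ rate.
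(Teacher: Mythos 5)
Your proof is correct and follows essentially the same route as the paper: telescope $|\mathbb{G}_n|-|\mathbb{G}_0|=\sum_{k<n}(|\mathbb{G}_{k+1}|-|\mathbb{G}_k|)$, apply the per-step cardinality bound from Lemma~\ref{lemma:diff-Lambda}, use the contraction of Theorem~\ref{bvpeig} to replace each $e_k^{-1/s}$ by $\alpha^{(n-k)/s}e_n^{-1/s}$, and sum the geometric series; then pass from \eqref{ineq:E U-opt-bound} to \eqref{ineq:lambda-opt-bound} via the definition of $\delta_{\mathbb{G}_n}(\lambda_{(i)})$ and \eqref{lambdaineq}. The only cosmetic difference is that you route the eigenvalue estimate through $d_{H_p^1}(M(\lambda_{(i)}),M_{\mathbb{G}_n}(\lambda_{(i)}))$ and $\delta_{H_p^1}(\mathscr{M},\mathscr{M}_{\mathbb{G}_n})$, whereas the paper invokes the definition of $\delta_{\mathbb{G}}(\lambda)$ directly; both are the same one-line observation.
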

\begin{proof}
	We obtain from Theorem \ref{bvpeig} that there exists $\alpha\in (0,1)$ such that it holds for any $0\leq j<n$ that
	\begin{equation*}
		\|U-\mathscr{E}_{\mathbb{G}_j} U\|_{a}^{-1/s}\leq \alpha^{(n-j)/s} \|U-\mathscr{E}_{\mathbb{G}_n} U\|_{a}^{-1/s},
	\end{equation*}
	which together with Lemma \ref{lemma:diff-Lambda} leads to
	\begin{equation*}
		\begin{aligned}
			|\mathbb{G}_n|-|\mathbb{G}_0|&=\sum^{n-1}_{j=0}(|\mathbb{G}_{j+1}|-|\mathbb{G}_j|)\\
			&\lesssim\left(\sum_{l=1}^N |u_l|_s^{1/s}\right)\sum^{n-1}_{j=0}\|U-\mathscr{E}_{\mathbb{G}_j}U\|_{a}^{-1/s}\\
			&\lesssim \|U-\mathscr{E}_{\mathbb{G}_n}U\|_{a}^{-1/s}\left(\sum_{l=1}^N |u_l|_s^{1/s}\right)\sum^{n}_{j=1}\alpha^{j/s}.
		\end{aligned}
	\end{equation*}
	Namely,
	\begin{equation*}
		|\mathbb{G}_n|-|\mathbb{G}_0|\lesssim \|U-\mathscr{E}_{\mathbb{G}_n}U\|_{a}^{-1/s}\left(\sum_{l=1}^N |u_l|_s^{1/s}\right)
	\end{equation*}
	because of $\alpha<1$, which gives \eqref{ineq:E U-opt-bound}. On the other hand, combining \eqref{lambdaineq}, the definition of $\delta_\mathbb{G}(\lambda)$ with \eqref{ineq:E U-opt-bound}, we arrive at \eqref{ineq:lambda-opt-bound}.
\end{proof}

Similar to Thorem \ref{decreasespace}, we obtain the following conclusion from Theorem \ref{complexitytheorem}.
\begin{theorem}\label{thm:opt-complex}
	Let $\lambda_{k_0+1}\le\lambda_{k_0+2}\le\cdots\le\lambda_{k_0+N}$ be a cluster of eigenvalues of \eqref{weakform}, which are $m$ eigenvalues $\lambda_{(1)}<\lambda_{(2)}<\cdots<\lambda_{(m)}$ with the corresponding eigenspace $M(\lambda_{(i)})$ if not accounting the multiplicity. Assume that the multiplicity of each eigenvalue is $q_i$ with $N=\sum_{i=1}^m q_i$ and $M(\lambda_{(i)})\subset\mathcal {A}^s$ for some $s$. Let $\{u_{k_i+l}\}_{l=1}^{q_i}$ be any orthonormal basis of $M(\lambda_{(i)})$ with $k_i=\sum_{j=1}^{i-1} q_i$ and $\{(\lambda_{\mathbb{G}_{n},l},u_{\mathbb{G}_{n},l})\in\mathbb{R}\times V_{\mathbb{G}_{n}}:l=1,\ldots,N\}_{n\in \mathbb{N}}$ is a sequence of planewave approximations produced by Algorithm \ref{algo:FAPWM} with marking parameter $\tilde{\theta}\in\left(0,\sqrt{\dfrac{\alpha_*}{3\alpha^*}}\right)$ and $\displaystyle\zeta\in\left(0,\frac{\sqrt{\frac{\alpha_*}{3\alpha^*}}-\tilde{\theta}}{1+\sqrt{\frac{\alpha_*}{3\alpha^*}}} \right)$. Set $M_{\mathbb{G}_n}(\lambda_{(i)})= \operatorname{span} \{u_{\mathbb{G}_n,k_i+1},\ldots,u_{\mathbb{G}_{n},k_i+q_i}\}$.  If Assumption \ref{asp:dorfler-minimal} is satisfied, then the $n$-th iterate solution space $\mathscr{M}_{\mathbb{G}_n}$ satisfies
	\begin{equation*}
		\delta_{H_p^1(\Omega)}(\mathscr{M},\mathscr{M}_{\mathbb{G}_n}) \lesssim (|\mathbb{G}_n|-|\mathbb{G}_0|)^{-s}
	\end{equation*}
	provided $M_0\gg 1$.
\end{theorem}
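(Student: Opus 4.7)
The plan is to reduce the claim directly to Theorem \ref{complexitytheorem} via the subspace--distance estimate that was already established in the proof of Theorem \ref{decreasespace}. Concretely, I would proceed as follows.

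First, I would fix an orthonormal basis $\{u_{k_i+1},\ldots,u_{k_i+q_i}\}$ of each $M(\lambda_{(i)})$ and form $U=(u_1,\ldots,u_N)$, exactly as in the statement. Then, for any unit (in $\|\cdot\|_a$) element $u=\sum_l \alpha_{i,l}u_{k_i+l}\in M(\lambda_{(i)})$, the triangle/Cauchy--Schwarz argument used inside Theorem \ref{decreasespace} gives
\[
\|u-E_{\mathbb{G}_n,i} u\|_a^2 \le \sum_{l=1}^{q_i}\|u_{k_i+l}-E_{\mathbb{G}_n,i} u_{k_i+l}\|_a^2,
\]
since $\sum_l|\alpha_{i,l}|^2\le 1$. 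Taking supremum and using $E_{\mathbb{G}_n,i}u\in M_{\mathbb{G}_n}(\lambda_{(i)})$, this yields
\[
d_{H_p^1(\Omega)}^2(M(\lambda_{(i)}),M_{\mathbb{G}_n}(\lambda_{(i)}))\le \|U_i-E_{\mathbb{G}_n,i}U_i\|_a^2,
\]
and summing over $i=1,\ldots,m$ gives
\[
\sum_{i=1}^m d_{H_p^1(\Omega)}^2(M(\lambda_{(i)}),M_{\mathbb{G}_n}(\lambda_{(i)}))\le \|U-\mathscr{E}_{\mathbb{G}_n}U\|_a^2.
\]

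Second, I would invoke Theorem \ref{complexitytheorem}, whose hypotheses coincide with the present ones (the choice of $\tilde\theta$, $\zeta$, and Assumption \ref{asp:dorfler-minimal} are identical, and $M_0\gg 1$ is assumed throughout), to bound the right-hand side:
\[
\|U-\mathscr{E}_{\mathbb{G}_n}U\|_a \lesssim (|\mathbb{G}_n|-|\mathbb{G}_0|)^{-s}.
\]
Combining the previous two displays gives the desired estimate for $d_{H_p^1(\Omega)}(\mathscr{M},\mathscr{M}_{\mathbb{G}_n})$.

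Third, to promote this to the symmetric distance $\delta_{H_p^1(\Omega)}$, I would apply Lemma \ref{dxydyx} separately to each pair $M(\lambda_{(i)}),M_{\mathbb{G}_n}(\lambda_{(i)})$, which have equal (finite) dimension $q_i$ once $M_0\gg 1$. Since $d_{H_p^1(\Omega)}(M(\lambda_{(i)}),M_{\mathbb{G}_n}(\lambda_{(i)}))\to 0$ by the already-proved rate, the factor $[1-d_{H_p^1(\Omega)}]^{-1}$ is bounded uniformly in $n$, so
\[
d_{H_p^1(\Omega)}(M_{\mathbb{G}_n}(\lambda_{(i)}),M(\lambda_{(i)}))\lesssim d_{H_p^1(\Omega)}(M(\lambda_{(i)}),M_{\mathbb{G}_n}(\lambda_{(i)})).
\]
Squaring, summing over $i$, and taking the max with the other direction yields the conclusion on $\delta_{H_p^1(\Omega)}(\mathscr{M},\mathscr{M}_{\mathbb{G}_n})$.

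There is no real obstacle here: all the heavy lifting (the D\"orfler-type analysis, the transfer to source problems via $W^{\mathbb{G}_n}$, the minimal-cardinality argument and the resulting bound $\|U-\mathscr{E}_{\mathbb{G}_n}U\|_a \lesssim (|\mathbb{G}_n|-|\mathbb{G}_0|)^{-s}$) has been done in Theorem \ref{complexitytheorem}, and the passage from the norm of $U-\mathscr{E}_{\mathbb{G}_n}U$ to a gap between subspaces was performed in Theorem \ref{decreasespace}. The only delicate point is checking that Lemma \ref{dxydyx} can be applied per eigenvalue and that the resulting constants remain uniform in $n$, which is ensured by the linear convergence of $d_{H_p^1(\Omega)}(M(\lambda_{(i)}),M_{\mathbb{G}_n}(\lambda_{(i)}))$ to zero guaranteed by Theorem \ref{decreasespace}.
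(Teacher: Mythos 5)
Your proposal is correct and follows essentially the same route as the paper: the paper simply states that Theorem \ref{thm:opt-complex} follows from Theorem \ref{complexitytheorem} ``similar to Theorem \ref{decreasespace},'' i.e., via the same passage from $\|U-\mathscr{E}_{\mathbb{G}_n}U\|_a$ to $d_{H_p^1(\Omega)}(\mathscr{M},\mathscr{M}_{\mathbb{G}_n})$ and then Lemma \ref{dxydyx} to symmetrize. One small point to tighten: the basis $\{u_{k_i+l}\}$ is $L^2$-orthonormal, so for $\|u\|_a=1$ one has $\sum_l|\alpha_{i,l}|^2=1/\lambda_{(i)}$ rather than $\le 1$; since $\lambda_{(i)}>0$ is fixed this only changes constants and is absorbed by $\lesssim$.
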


\section{Concluding remarks}\label{sec:concl}
In this paper, we have  designed and analyzed an adaptive planewave algorithm for a class of second-order elliptic eigenvalue problems. Following \citet{dai2008convergence,dai2015convergence}, we apply the relationship between the planewave approximation for the elliptic eigenvalue problem and those for the corresponding source problems to carry out the analysis for the convergence rate and the complexity of the adaptive planewave algorithm. We have proved that the adaptive planewave algorithm has the asymptotic contraction property and quasi-optimal complexity. We would like to mention that the current work is a theoretical part of the adaptive planewave method for eigenvalue problems. To implement the adaptive planewave method, there are a couple of practical issues that need to be addressed, which together with applications to electronic structure calculations is our on-going work and will be addressed elsewhere.


\section*{Funding}
This work was supported by the National Key R \& D Program of China under grants 2019YFA0709600 and 2019YFA0709601, the National Natural Science Foundation of China under grants 12021001 and 11671389, and the National Center for Mathematics and Interdisciplinary Sciences, CAS.

\bibliographystyle{abbrvnat}
\bibliography{refs}

\end{document}